\def\mapr#1{\stackrel{#1}{\longrightarrow}}
\def\surjd#1{\lower4pt\hbox{$\downarrow$}\kern-5.65pt\Big\downarrow\rlap {$\vcenter{\hbox{$\scriptstyle{{#1}}$}}$}}
\newcommand{\Spec}{\text{\it Spec}}
\newcommand{\Q}{{\mathbb Q}}
\newcommand{\Z}{{\mathbb Z}}
\newcommand{\F}{{\mathbb F}}
\newcommand{\Frob}{{\mathit{Frob}}}
\newcommand{\N}{{\mathbb N}}
\newcommand{\G}{{\mathbb G}}
\renewcommand{\O}{{\mathcal{O}}}
\newcommand{\Cl}{\text{\it Cl}}
\newcommand{\p}{{\mathfrak p}}
\newcommand{\q}{{\mathfrak q}}
\newcommand{\sm}{{\,\smallsetminus\,}}
\newcommand{\et}{\mathit{et}}
\newcommand{\nrel}{\textit{nr\!,el}}
\newcommand{\fl}{\text{\it fl}\,}
\newcommand{\cd}{\text{\it cd}\:}
\newcommand{\Gal}{\mathit{G}}
\newcommand{\Hom}{\text{\rm Hom}}
\font\emas = cmsy10 scaled\magstep2
\font\smallemas = cmsy10
\newcommand{\freeproductmed}{\mathop{\lower.2mm\hbox{\emas \symbol{3}}}\limits}
\newcommand{\freeproductsmall}{\mathop{\lower.2mm\hbox{\smallemas \symbol{3}}}}
\newcommand{\lang}{\longrightarrow}
\font\russ=wncyr10
\renewcommand{\min}{\text{\rm min}}
 \newcommand{\ressum}{\mathop{\hbox{${\displaystyle\bigoplus}'$}}\limits}
 \newcommand{\ressumsmall}{\mathop{\hbox{${\bigoplus}'$}}}
 \newcommand{\coker}{\text{\rm coker}}
\newcommand{\ds}{\displaystyle}
\renewcommand{\a}{\mathfrak{a}}
\newcommand{\nr}{\mathit{nr}}
\newcommand{\el}{\mathit{el}}
\renewcommand{\P}{\mathfrak{P}}
\newcommand{\im}{\mathrm{im}}
\newcommand{\M}{\mathcal{M}}
\newcommand{\Et}{\mathrm{Et}}
\newcommand{\FEt}{\mathrm{FEt}}
\newcommand{\Pic}{\mathrm{Pic}}
\newcommand{\Br}{\mathrm{Br}}
\newcommand{\Mor}{\mathrm{Mor}}
\newcommand{\liso}{\stackrel{\sim}{\lang}}
\newcommand{\rec}{\mathit{rec}}
\newcommand{\T}{\mathcal{T}}
\newcommand{\mQ}{\mathfrak{Q}}
\newcommand{\cor}{\mathit{cor}}
\font\russ=wncyr10
\def\Sha{\hbox{\russ\char88}}
\def\Be{\hbox{\russ\char66}}
\newcommand{\back}{\hspace{-1em}}
\newtheoremstyle{alex}
  {}
  {}
  {\sl}
  {}
  {\bf}
  {.}
  {.5em}
  {}
\newtheoremstyle{alexdef}
  {}
  {}
  {\rm }
  {}
  {\bf}
  {.}
  {.5em}
  {}
\theoremstyle{alex}
\newtheorem{theorem}{Theorem}[section]
\newtheorem{corollary}[theorem]{Korollar}
\newtheorem{lemma}[theorem]{Lemma}
\newtheorem{proposition}[theorem]{Satz}
\theoremstyle{alexdef}
\newtheorem{definition}[theorem]{Definition}
\newtheorem{remark}[theorem]{Bemerkung}
\newtheorem{example}[theorem]{Beispiel}
\newtheorem{remarks}[theorem]{Bemerkungen}
\title{\bf\boldmath \"{U}ber Pro-$p$-Fundamentalgruppen mar\-kierter arithmetischer Kurven}
\author{Alexander Schmidt}
\date{16. Januar 2009}
\begin{document}
\hyphenation{Galois-er-wei-te-rung di-men-sio-na-len Ei-gen-schaft In-for-ma-tion Iso-mor-phis-men}
\maketitle
\begin{abstract}
Let $k$ be a global field, $p$ an odd prime number different from $\text{char}(k)$ and $S$, $T$ disjoint, finite sets of primes of $k$. Let $G_S^T(k)(p)=\Gal(k_S^T(p)|k)$ be the Galois group of the maximal $p$-extension of $k$ which is unramified outside $S$ and completely split at $T$.  We prove the existence of a finite set of primes $S_0$, which can be chosen disjoint from any given set $\M$ of Dirichlet density zero, such that the cohomology of $G_{S\cup S_0}^T(k)(p)$ coincides with the \'{e}tale cohomology of the associated marked arithmetic curve. In particular, \hbox{$\cd\, G_{S\cup S_0}^T(k)(p)=2$}. Furthermore, we can choose $S_0$ in such a way that $k_{S\cup S_0}^T(p)$ realizes the maximal $p$-extension $k_\p(p)$ of the local field $k_\p$ for all $\p\in S\cup S_0$, the cup-product
$
H^1(G_{S\cup S_0}^T(k)(p),\F_p) \otimes H^1(G_{S\cup S_0}^T(k)(p),\F_p) \to H^2(G_{S\cup S_0}^T(k)(p),\F_p)
$
is surjective and the decomposition groups of the primes in $S$ establish a free product inside $G_{S\cup S_0}^T(k)(p)$. This generalizes previous work of the author where similar results were shown in the case $T=\varnothing$ under the restrictive assumption  $p\nmid \# \Cl(k)$ and $\zeta_p\notin k$.
\end{abstract}

\section{Einf\"{u}hrung}

Es seien $k$ ein Zahlk\"{o}rper, $S$ eine endliche Stellenmenge von $k$ und $p$ eine Primzahl.  Mit $k_S(p)$ bezeichnen wir die maximale au{\ss}erhalb $S$ unverzweigte $p$-Erweiterung von $k$ und setzen
\[
G_S(k)(p)=\Gal(k_S(p)|k).
\]
Die Gruppe $G_S(k)(p)$  ist im Fall, dass $S$ alle Primteiler von $p$ enth\"{a}lt, gut (wenn auch lange nicht vollst\"{a}ndig) verstanden; siehe Kapitel~X von \cite{NSW}  f\"{u}r einen \"{U}berblick \"{u}ber die bekannten Ergebnisse.
Im Fall, dass $S$ nicht alle Stellen \"{u}ber $p$ enth\"{a}lt, war, abgesehen davon, dass die Gruppe $G_S(k)(p)$ endlich pr\"{a}sentierbar ist,  bis vor kurzem nur wenig bekannt.

Im Fall $k=\Q$ fand J. Labute \cite{La} im Jahr 2005 die ersten Beispiele von Paaren $(S,p)$ mit der Eigenschaft, dass $G_S(\Q)(p)$  die kohomologische Dimension $2$ hat und $p$ nicht in $S$ liegt. Dieser Fortschritt  war durch die neu entwickelte Theorie der milden Pro-$p$-Gruppen m\"{o}glich.  Der Autor konnte dann Labutes Ergebnisse auf beliebige Zahlk\"{o}rper ausdehnen \cite{circular,kpi1},
wobei der Fokus auf der $K(\pi,1)$-Eigenschaft lag. Diese besagt, dass die Kohomologie der Gruppe $G_S(k)(p)$ mit der \'{e}talen Kohomologie des Schemas $\Spec(\O_k)\sm S$ \"{u}bereinstimmt, was insbesondere kohomologische Dimension~$2$ impliziert. Allerdings mussten bei gegebenem Zahlk\"{o}rper~$k$ endlich viele Primzahlen von der Betrachtung ausgeschlossen werden, n\"{a}mlich die Teiler der Klassenzahl und solche~$p$ mit $\zeta_p\in k$.
Es hat sich nun herausgestellt, dass diese Einschr\"{a}nkung eliminiert werden kann, wenn man von vornherein eine allgemeinere Fragestellung betrachtet, n\"{a}mlich die nach der Gruppe $G_S^T(k)(p)$. Hier sind $S$ und $T$ endliche Stellenmengen,
$k_S^T(p)$ die maximale $p$-Erweiterung von $k$, die unverzweigt au{\ss}erhalb $S$ und voll zerlegt bei~$T$ ist, und $G_S^T(k)(p)=\Gal(k_S^T(p)|k)$. Diese Gruppe ist auch im Funktionenk\"{o}rperfall interessant.

\medskip
In dieser Arbeit zeigen wir ohne Annahmen an $S$ und $T$, dass durch Hinzunahme endlich vieler Stellen zu~$S$ eine Situation geschaffen werden kann, in der $G_S^T(k)(p)$ die kohomologische Dimension~$2$  und weitere gute Eigenschaften hat. Bei der Wahl der hinzuzunehmenden Stellen kann man \"{u}berdies eine gegebene Stellenmenge der Dirichletdichte~$0$ (also insbesondere die Stellen \"{u}ber $p$) vermeiden. Das genaue Resultat lautet folgenderma{\ss}en.

\begin{theorem} \label{haupt} Es sei $k$ ein globaler K\"{o}rper und $p$ eine ungerade von $\mathrm{char}(k)$ verschiedene Primzahl. Es seien $S$, $T$ und $\M$ paarweise disjunkte Stellenmengen von~$k$, wobei $S$ und $T$ endlich seien und $\M$ die Dirichletdichte $\delta(\M)=0$ habe. Dann existiert eine endliche zu $S\cup T\cup \M$ disjunkte Stellenmenge $S_0$ von~$k$,  so dass die folgenden Aussagen gelten.

\medskip
\begin{compactitem}
\item[\rm(i)] Die Gruppe $G_{S\cup S_0}^T(k)(p)$ hat die kohomologische Dimension~$2$ und das Cup-Produkt
\[
H^1(G_{S\cup S_0}^T(k)(p),\F_p) \otimes H^1(G_{S\cup S_0}^T(k)(p),\F_p) \lang H^2(G_{S\cup S_0}^T(k)(p),\F_p)
\]
ist surjektiv. \smallskip
\item[\rm (ii)] Es gilt \[ k_{S\cup S_0}^T(p)_\p=k_\p(p)\] f\"{u}r alle\/ $\p\in S\cup S_0$, d.h.\ die Erweiterung globaler K\"{o}rper $k_{S\cup S_0}^T(p)|k$  realisiert f\"{u}r jede Stelle\/ $\p\in S\cup S_0$ die maximale $p$-Erweiterung $k_\p(p)$ des lokalen K\"{o}rpers~$k_\p$.

\smallskip
\item[\rm (iii)] Die Zerlegungsgruppen der Stellen aus $S$ bilden ein freies Produkt in der Gruppe $G_{S\cup S_0}^T(k)(p)$, d.h.\ der nat\"{u}rliche Homomorphismus
    \[
    \freeproductmed_{\p\in S(k_{S_0}^{S\cup T}(p))} \Gal(k_\p(p)|k_\p) \lang \Gal(k_{S\cup S_0}^T(p)|k_{S_0}^{S\cup T}(p))
    \]
    ist ein Isomorphismus von Pro-$p$-Gruppen.

\smallskip
\item[\rm (iv)] F\"{u}r jeden diskreten  $G_{S\cup S_0}^T(p)$-$p$-Torsionsmodul $M$ sind die Kantenhomomorphismen der Hochschild-Serre-Spektralfolge f\"{u}r die universelle Pro-$p$-\"{U}ber\-lage\-rung
    \[
    H^i(G_{S\cup S_0}^T(k)(p), M) \lang H^i_\et (X\sm (S\cup S_0), T, M)
    \]
    Isomorphismen f\"{u}r alle $i\geq 0$. Hier ist $X$ das eindeutig bestimmte eindimensionale, regul\"{a}re, zusammenh\"{a}ngende und \"{u}ber $\Spec(\Z)$ eigentliche Schema mit Funktionenk\"{o}rper $k$ und $H^i_\et (X\sm (S\cup S_0), T, M)$ bezeichnet die \'{e}tale Kohomologie der in $T$ markierten arithmetischen Kurve $X\sm (S\cup S_0)$ mit Werten in der durch $M$ definierten Garbe (siehe Abschnitt~\ref{mark-sec}).
\end{compactitem}

\end{theorem}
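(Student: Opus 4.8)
The plan is to realize the asserted properties by adding primes to $S$ in a controlled way, using the arithmetic of marked arithmetic curves and Labute--Schmidt-style mildness criteria. The starting point is the étale cohomology of the marked curve $X\sm S$ with its comparison to $G_S^T(k)(p)$: one always has edge-map homomorphisms $H^i(G_S^T(k)(p),M)\to H^i_\et(X\sm S,T,M)$, and the $K(\pi,1)$-property (iv) is equivalent to these being isomorphisms for all $i$ and all $M$. So the core task is to enlarge $S$ to $S\cup S_0$ so that the marked curve becomes a $K(\pi,1)$ for $p$, with the extra local and free-product features (ii), (iii). As a first step I would recall from the earlier sections the Poitou--Tate-type duality and the Euler--Poincaré formula for marked arithmetic curves, together with the computation of $H^i_\et(X\sm S,T,\F_p)$; these pin down the relation $h^2 = h^1 - \text{(local corrections)}$ that will be needed to force $\cd=2$ and surjectivity of the cup product.

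The main construction is an inductive selection of auxiliary primes $\q_1,\dots,\q_r$, all avoiding $S\cup T\cup\M$. At each stage one chooses $\q_j$ via a Chebotarev argument inside a suitable finite subextension of $k_{S\cup\{\q_1,\dots,\q_{j-1}\}}^T(p)\,|\,k$, arranging two things simultaneously: that $\q_j$ is inert (or has the prescribed splitting behaviour) so that its decomposition group in the pro-$p$ group is the full local group $\Gal(k_{\q_j}(p)|k_{\q_j})$ — this is what yields (ii) for the new primes — and that the resulting "linking diagram" on the generators is such that the associated quadratic dual / relation structure satisfies Labute's mildness condition (the relevant cup-product pairing being non-degenerate in the appropriate sense, e.g. the linking graph being a "strongly connected" / non-singular circular system). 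Because $\delta(\M)=0$, the Chebotarev conditions have positive density and remain satisfiable after deleting $\M$; and because $p$ is odd, the local groups $\Gal(k_\q(p)|k_\q)$ at primes $\q\nmid p$ with $\zeta_p\notin k_\q$ are free (rank $1$ or $2$), which is what makes the free-product statement (iii) plausible. For (ii) at the primes of $S$ itself, I would additionally impose that the chosen primes "absorb" the obstruction to $k_{S\cup S_0}^T(p)$ realizing $k_\p(p)$ for $\p\in S$ — i.e. kill the relevant cokernel in a Poitou--Tate sequence — which is again a finite set of Chebotarev conditions.

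Once $S_0$ is chosen with these properties, the endgame is mostly formal. Mildness of $G_{S\cup S_0}^T(k)(p)$ gives $\cd=2$ and that the relations are "quadratically defined," which translates into surjectivity of the cup product $H^1\otimes H^1\to H^2$, giving (i). The equality of Euler characteristics (from the Euler--Poincaré formula, now that $\cd=2$ and the local groups are fully realized) forces the edge maps in (iv) to be isomorphisms: $H^1$ agrees by construction/inflation, $H^2$ agrees by a dimension count, and higher degrees vanish on both sides. Statement (iii) follows by comparing the free product of the local groups with $\Gal(k_{S\cup S_0}^T(p)|k_{S_0}^{S\cup T}(p))$: both are pro-$p$ groups of cohomological dimension $\le 2$, the natural map is an isomorphism on $H^1$ and $H^2$ (using that the local cohomology at $S$ accounts for exactly the difference between the arithmetic curve with and without the $S$-punctures, via excision in étale cohomology), hence an isomorphism by the standard pro-$p$ criterion. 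The hard part, and the place where the hypotheses $p$ odd and $\delta(\M)=0$ really get used, is the simultaneous Chebotarev step: one must choose the auxiliary primes so that the combinatorial mildness condition holds \emph{and} the prescribed local behaviour at both the old primes $S$ and the new primes is achieved, all while dodging $\M$ — keeping the linking/quadratic-form data non-degenerate as the set grows is the genuine obstacle, and handling it is presumably where the bulk of the technical work in the paper lies.
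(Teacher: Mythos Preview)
Your outline is in the right spirit---Chebotarev-chosen primes, mildness, then formal consequences---but it misses the central structural trick of the paper, and your direct approach would run into real difficulties.

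The paper does \emph{not} try to choose $S_0$ so that $G_{S\cup S_0}^T(k)(p)$ itself is mild. Instead it performs a role reversal: it first moves $S$ into the completely-split locus and enlarges $T$ by an auxiliary set $T_0$ (chosen so that $\null_p\Cl_{T\cup T_0}(k)=0$, resp.\ $V_{T\cup T_0}^\varnothing(k)=0$ when $\delta=1$), and then constructs $S_0$ so that $(X\sm S_0,\, S\cup T\cup T_0)$ is a $K(\pi,1)$ for $p$ with surjective cup product (Theorem~\ref{hilf}). Only afterwards does a transfer proposition (Satz~\ref{erweiterung}) strip the extra markings and move $S$ back to the puncture set, yielding the $K(\pi,1)$-property for $(X\sm(S\cup S_0),T)$ together with (ii) for $\p\in S$ and the free product (iii)---these last two come out \emph{for free} from the transfer argument via excision and free-product cohomology, not from further Chebotarev conditions as you suggest.

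This indirection is not cosmetic. In the auxiliary problem all primes in the ramification locus $S_0$ are tame with $N(\p)\equiv 1\bmod p$, so the local cup-product computations (Satz~\ref{komponenten}) and the specific mildness criterion (Theorem~\ref{mildkrit}: a decomposition $H^1=U\oplus V$ with $V\cup V=0$ and $U\otimes V\to H^2$ surjective) become tractable---one engineers a lower-triangular matrix of local components with nonzero diagonal. Your direct approach leaves the original primes of $S$ (which may lie over $p$, contributing $[k_\p:\Q_p]$ generators each and no simple linking description) in the ramification set, and your phrase ``absorb the obstruction to realizing $k_\p(p)$'' hides exactly the step that the paper's indirection makes automatic. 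Also a small correction: for $\q\nmid p$ with $\zeta_p\notin k_\q$ the group $\Gal(k_\q(p)|k_\q)$ is $\Z_p$, not free of rank~$2$; the two-generator one-relator description applies when $\zeta_p\in k_\q$, and this dichotomy is what the paper exploits when showing (ii) for $\p\in S_0$.
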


\begin{remarks} 1. Der Grund f\"{u}r den Ausschluss der Primzahl $p=2$ in Theorem~\ref{haupt} ist nicht das \"{u}bliche Problem mit den reellen Stellen, sondern, dass derzeit noch keine gute Theorie milder Pro-$2$-Gruppen existiert.

\noindent
2. Es stellt sich die Frage, ob die Eigenschaften (i)--(iv) dann auch f\"{u}r die Gruppe $G_{S\cup S_0'}^T(k)(p)$ gelten, wobei $S_0'$ eine beliebige, $S_0$ umfassende und zu $T$ disjunkte endliche Stellenmenge ist. Wir werden dies in Abschnitt~\ref{erwsec} zeigen, falls keine der Stellen aus $S_0'\sm S_0$  in der Erweiterung $k_{S\cup S_0}^T(p)|k$ voll zerlegt ist (siehe Satz~\ref{enlarge}).

\noindent
3. Entsprechende Ergebnisse f\"{u}r die volle Gruppe $G_S^T(k)$, d.h.\ ohne den \"{U}bergang zur maximalen Pro-$p$-Faktorgruppe, scheinen derzeit au{\ss}er Reichweite zu sein.
\end{remarks}

Im Zahlk\"{o}rperfall spielen die Stellen \"{u}ber $p$ keine Sonderrolle in Theorem~\ref{haupt}, insbesondere wird nicht angenommen, dass  $S$ die Menge $S_p$ der Teiler von $p$ enth\"{a}lt.  Aber selbst im Zahl\-k\"{o}r\-per\-fall mit $S\supset S_p$ und $T=\varnothing$
liefert Theorem~\ref{haupt} neue Information: Aussage (ii) war bislang nur bekannt, wenn $k$ eine primitive $p$-te Einheitswurzel enth\"{a}lt (Satz von Kuz'min, siehe \cite{kuz} oder \cite{NSW},  10.8.4), sowie  f\"{u}r gewisse CM-K\"{o}rper (siehe \cite{muk} oder \cite{NSW}, X \S8 Exercise). Nach (iii) erreicht man durch Hinzunahme endlich vieler Stellen zu $S\supset S_p$, dass die Zerlegungsgruppen der Stellen \"{u}ber $p$ ein freies Produkt innerhalb der Gruppe $G_S(k)(p)$ bilden. Dies war bislang nur f\"{u}r Stellenmengen $S$ der Dirichletdichte~$1$ (siehe \cite{NSW}, 9.4.4), jedoch nicht f\"{u}r endliche Stellenmengen bekannt.

Im \glqq zahmen\grqq\ Fall $S\cap S_p=\varnothing$ mit $T=\varnothing$ wurden in \cite{kpi1} die Aussagen (i), (ii) und (iv) bewiesen, allerdings nur unter der Voraussetzung $p\nmid \# \Cl(k)$ und $\zeta_p\notin k$. Teilergebnisse im \glqq gemischten\grqq\ Fall $\varnothing \varsubsetneq S\cap S_p \varsubsetneq S_p$ wurden von K.~Wingberg \cite{wing}, Ch.~Maire \cite{maire} und D.~Vogel \cite{Vo} erzielt.

\smallskip
In Abschnitt~\ref{dualsec} werden wir sehen, dass Theorem~\ref{haupt} eine gro{\ss}e Klasse von Beispielen liefert, in denen  $G_S^T(k)(p)$ eine Pro-$p$-Dualit\"{a}tsgruppe ist (siehe Satz~\ref{dualmod}). Im Zahlk\"{o}rperfall mit $S\supset S_p$ und $T=\varnothing$ war dies nach Resultaten von Wingberg bekannt, sobald $k$ eine primitive $p$-te Einheitswurzel enth\"{a}lt  (siehe \cite{wi} oder \cite{NSW}, 10.9.8). Im Fall $\zeta_p\notin k$ gab es lediglich Ergebnisse f\"{u}r reell-abelsche Zahlk\"{o}rper und  gewisse CM-K\"{o}rper (siehe \cite{NSW}, 10.9.15 und den nachfolgenden Remark).

\smallskip
Wesentlich f\"{u}r den Beweis von Theorem~\ref{haupt} sind  arithmetische Du\-a\-li\-t\"{a}ts\-s\"{a}tze, Hasseprinzipien f\"{u}r die Kohomologie, die Theorie der milden Pro-$p$-Grup\-pen nach Labute und die Technik der freien Produkte von B\"{u}ndeln von Pro-$p$-Gruppen \"{u}ber einer topologischen Basis, wie sie in  Kapitel~IV von \cite{NSW} entwickelt wurde.

\smallskip
Der Autor dankt Ph.~Lebacque f\"{u}r seine Kommentare zu einer vorl\"{a}ufigen Version dieser Arbeit.

\section{Der \'{e}tale Situs einer markierten Kurve} \label{mark-sec}

Es sei $Y$ ein eindimensionales, noethersches, regul\"{a}res Schema und $T$ eine endliche Menge abgeschlossener Punkte auf $Y$. Mit $\Et(Y)$ bezeichnen wir wie \"{u}blich die Kategorie der \'{e}talen Morphismen von endlichem Typ $Y'\to Y$.

\begin{definition}
Die Kategorie $\Et(Y,T)$ ist die volle Unterkategorie von $\Et(Y)$, bestehend aus allen Objekten $f: Y'\to Y$ mit der Eigenschaft, dass f\"{u}r jeden abgeschlossenen Punkt $y'\in Y'$ mit $y=f(y')\in T$ die Restklassenk\"{o}rpererweiterung $k(y')|k(y)$ trivial ist. Der \'{e}tale Situs $(Y,T)_\et$ der in $T$ markierten Kurve $Y$ besteht aus der Kategorie $\Et(Y,T)$ mit surjektiven Familien als \"{U}berdeckungen.
\end{definition}

Offenbar gilt $(Y,\varnothing)_\et=Y_\et$. F\"{u}r $T_1\subset T_2$ haben wir einen nat\"{u}rlichen Morphismus $\iota: (Y,T_1)_\et\to (Y,T_2)_\et$ und somit f\"{u}r jede Garbe $F$ abelscher Gruppen auf $(Y,T_2)_\et$ und jedes $i\geq 0$ Homomorphismen
\[
H^i_\et(Y,T_2, F) \lang H^i_\et(Y,T_1,\iota^*F).
\]
F\"{u}r eine echte abgeschlossene (und damit endliche) Teilmenge $M\subset Y$ definiert man die lokalen Kohomologiegruppen $H^*_M(Y,T,-)$ auf die \"{u}bliche Art und Weise als die Rechtsableitungen des Funktors
\[
F \longmapsto \ker \big(\Gamma(Y,T,F) \to \Gamma (Y\sm M, T\sm M, F)\big).
\]
 In gleicher Weise wie f\"{u}r gew\"{o}hnliche \'{e}tale Kohomologie zeigt man Ausschneidung, d.h.\ es gilt
\[
H^i_M(Y,T, F)\cong \bigoplus_{x\in M} H^i_x(Y_x^h, T_x^h, F),
\]
wobei $Y_x^h$ die Henselisierung von $Y$ in $x$ ist, und $T_x^h$ das Urbild von $T$ in $Y_x^h$ (also $T_x^h=\{x\}$, falls $x\in T$ und ansonsten $T_x^h=\varnothing$). Auch die Konstruktion des Cup-Produkts ist vollst\"{a}ndig analog zum gew\"{o}hnlichen \'{e}talen Situs: f\"{u}r Garben $F_1,F_2$ auf $(Y,T)_\et$ und $i,j\geq 0$ haben wir eine Cup-Produkt-Paarung
\[
H^i_\et(Y, T, F_1) \times H^j_\et(Y, T, F_2) \stackrel{\cup}{\lang} H^{i+j}_\et(Y, T, F_1\otimes F_2),
\]
mit den \"{u}blichen Eigenschaften.

\bigskip
Ganz analog konstruiert man auch die Fundamentalgruppe.
Wir betrachten die volle Unterkategorie $\FEt(Y, T)$ der endlichen Morphismen $Y' \to Y$ in $\Et(Y, T)$. Diese Kategorie erf\"{u}llt die Axiome einer Galoiskategorie (\cite{sga1}, V,~4). Nach Wahl eines geometrischen Punktes $\bar x$ in $Y \sm T$ haben wir den Faserfunktor
\[
\FEt(Y, T) \lang (\textit{Mengen}),\  (Y'\to Y)\mapsto \Mor_{Y}(\bar x, Y'),
\]
dessen Automorphismengruppe per definitionem die \'{e}tale Fundamentalgruppe  von $(Y,T)$ ist. Wir bezeichnen sie mit $\pi_1^\et(Y,T,\bar x)$. Von nun an sei $Y$ zusammenh\"{a}ngend. Dann sind die Fundamentalgruppen zu verschiedenen Basispunkten isomorph, wobei der Isomorphismus bis auf innere Automorphismen kanonisch ist. Wir werden dann den Basispunkt meist von der Notation ausschlie{\ss}en. Die Fundamentalgruppe ist proendlich und klassifiziert  \'{e}tale \"{U}berlagerungen von $Y$, in denen jeder Punkt von $T$ voll zerlegt ist.
Die Gruppe $H^1_\et(Y, T,\F_p)$ klassifiziert zyklische \"{U}berlagerungen vom Grad~$p$ von $(Y, T)$. Daher haben wir  Isomorphismen
\[
H^1 (\pi_1^\et(Y,T)(p), \F_p) \liso H^1 (\pi_1^\et(Y,T), \F_p) \liso H^1_\et(Y, T,\F_p),
\]
wobei $\pi_1^\et(Y,T)(p)$ die maximale Pro-$p$-Faktorgruppe von $\pi_1^\et(Y,T)$ bezeichnet. Diesen Isomorphismus kann man gut an der Hochschild-Serre-Spektralfolge sehen.  Wir betrachten die universelle Pro-$p$-\"{U}berlagerung $\widetilde{(Y,T)}(p)$ von $(Y,T)$. Diese ist ein Pro-Objekt in $\FEt(Y, T)$ und die Projektion
\[
\widetilde{(Y,T)}(p) \lang (Y, T)
\]
ist Galoissch mit Gruppe $\pi_1^\et(Y,T)(p)$.
Ist nun $M$ ein diskreter $\pi_1^\et(Y,T)(p)$-$p$-Torsionsmodul, so erhalten wir die Spektralfolge
\[
E_2^{ij}= H^i\Big(\pi_1^\et(Y,T)(p), H^j_\et \big(\widetilde{(Y,T)}(p), M\big) \Big)  \Rightarrow H^{i+j}_\et(Y, T, M),
\]
und insbesondere Kantenhomomorphismen
\[
\phi_{i,M}: H^i(\pi_1^\et(Y,T)(p), M) \longrightarrow H^i_\et(Y, T, M), \quad i\geq 0.
\]
\begin{lemma}\label{kpi1lem} Es sei $Y$ ein eindimensionales, noethersches, regul\"{a}res Schema und $T$ eine endliche Menge abgeschlossener Punkte auf $Y$. Dann sind f\"{u}r jeden diskreten $\pi_1^\et(Y,T)(p)$-$p$-Torsionsmodul $M$ die Homomorphismen $\phi_{0,M}$ und $\phi_{1,M}$ Isomorphismen, und $\phi_{2,M}$ ist injektiv. Die folgenden Aussagen sind \"{a}quivalent.

\medskip
\begin{compactitem}
\item[\rm (i)] $\phi_{i,M}$ ist ein Isomorphismus f\"{u}r alle $i$ und jedes $M$. \smallskip
\item[\rm (ii)] $\phi_{i,\F_p}$ ist ein Isomorphismus f\"{u}r alle $i$. \smallskip
\item[\rm (iii)] $H^i_\et(\widetilde{(Y,T)}(p), \F_p)=0$ f\"{u}r alle $i\geq 1$.
\end{compactitem}
\end{lemma}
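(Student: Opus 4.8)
The plan is to derive everything from the Hochschild--Serre spectral sequence
\[
E_2^{ij}= H^i\big(\pi_1^\et(Y,T)(p),\, H^j_\et(\widetilde{(Y,T)}(p), M)\big)\Longrightarrow H^{i+j}_\et(Y, T, M)
\]
together with one structural input, namely the vanishing
\[
H^1_\et\big(\widetilde{(Y,T)}(p), M\big)^{\pi_1^\et(Y,T)(p)}=0
\]
for every discrete $\pi_1^\et(Y,T)(p)$-$p$-torsion module $M$. Write $\pi=\pi_1^\et(Y,T)$, $\pi(p)=\pi_1^\et(Y,T)(p)$ and $N=\ker(\pi\to\pi(p))$, so that $N=\pi_1^\et(\widetilde{(Y,T)}(p))$ acts trivially on $M$; hence $H^0_\et(\widetilde{(Y,T)}(p),M)=M$ and $H^1_\et(\widetilde{(Y,T)}(p),M)=H^1(N,M)=\Hom_\cts(N,M)$ (identifying the low-degree étale cohomology of the pro-cover with continuous group cohomology, the only extra input about the site needed beyond what is recalled above). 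A nonzero element of $H^1(N,M)^{\pi(p)}$ is a continuous homomorphism $f\colon N\to M$ with nonzero image that is equivariant for the conjugation action of $\pi$ on $N$ and the given action on $M$; since $M$ is $p$-torsion, $f$ factors through the maximal pro-$p$ quotient of $N/\overline{[N,\pi]}$, which would then be nontrivial, and pushing out $1\to N\to\pi\to\pi(p)\to1$ along $N\twoheadrightarrow (N/\overline{[N,\pi]})(p)$ would produce a pro-$p$ quotient of $\pi$ strictly larger than $\pi(p)$ --- contradicting maximality. So the vanishing holds.

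Granting it, the first assertions are formal. From the five-term exact sequence
\[
0\to H^1(\pi(p),M)\xrightarrow{\ \phi_{1,M}\ } H^1_\et(Y,T,M)\to H^1_\et(\widetilde{(Y,T)}(p),M)^{\pi(p)}\to H^2(\pi(p),M)
\]
and the vanishing of the third term, $\phi_{1,M}$ is an isomorphism; $\phi_{0,M}$ is one because both sides equal $M^{\pi(p)}$. Since $E_2^{0,1}=H^1_\et(\widetilde{(Y,T)}(p),M)^{\pi(p)}=0$, no nonzero differential enters the bottom row in total degree $2$, so $E_\infty^{2,0}=E_2^{2,0}$, and $\phi_{2,M}$ is the inclusion $E_\infty^{2,0}\hookrightarrow H^2_\et(Y,T,M)$ of the bottom filtration step, hence injective.

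For the equivalences, (i)$\Rightarrow$(ii) is trivial. For (iii)$\Rightarrow$(i) I would first strengthen (iii) to: $H^j_\et(\widetilde{(Y,T)}(p),M)=0$ for all $j\ge 1$ and all discrete $p$-torsion $M$. Étale cohomology commutes with the filtered colimit of the finite submodules of $M$, so one may assume $M$ finite; as $\pi(p)$ is pro-$p$ and $M/pM\ne 0$ there is a $\pi(p)$-equivariant surjection $M\twoheadrightarrow\F_p$, and the long exact cohomology sequence of $0\to M'\to M\to\F_p\to0$ reduces the claim by induction on $\#M$ to the case $M=\F_p$, which is (iii). Once this holds, the spectral sequence for $M$ is concentrated in the row $j=0$, so $\phi_{i,M}\colon H^i(\pi(p),M)\to H^i_\et(Y,T,M)$ is an isomorphism for every $i$.

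The implication (ii)$\Rightarrow$(iii) is the crux, and I expect it to be the only real difficulty. Assume $\phi_{i,\F_p}$ is an isomorphism for all $i$, and suppose, for contradiction, that $j_0\ge 1$ is minimal with $H^{j_0}_\et(\widetilde{(Y,T)}(p),\F_p)\ne 0$. This group is $\varinjlim_r H^{j_0}_\et(Y_r,\F_p)$ over the finite subcovers $Y_r$, hence a nonzero discrete $p$-torsion $\pi(p)$-module, and a pro-$p$ group acting on such a module has nonzero fixed points, so $E_2^{0,j_0}=H^{j_0}_\et(\widetilde{(Y,T)}(p),\F_p)^{\pi(p)}\ne 0$. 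By minimality of $j_0$ the rows $1\le j\le j_0-1$ of $E_2$ vanish; therefore $E_r^{0,j_0}=E_2^{0,j_0}$ and $E_r^{j_0+1,0}=E_2^{j_0+1,0}=H^{j_0+1}(\pi(p),\F_p)$ for $r\le j_0+1$, and --- for degree reasons --- the only possibly nonzero differential touching either of these terms is $d_{j_0+1}\colon E_{j_0+1}^{0,j_0}\to E_{j_0+1}^{j_0+1,0}$, so $E_\infty^{0,j_0}=\ker d_{j_0+1}$ and $E_\infty^{j_0+1,0}=\coker d_{j_0+1}$, a quotient of $H^{j_0+1}(\pi(p),\F_p)$. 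Now $\phi_{j_0,\F_p}$ being an isomorphism forces the bottom filtration step of $H^{j_0}_\et(Y,T,\F_p)$ to be everything, hence $E_\infty^{i,j_0-i}=0$ for $i<j_0$, in particular $E_\infty^{0,j_0}=0$; thus $d_{j_0+1}$ is injective on the nonzero group $E_2^{0,j_0}$, hence nonzero, so $E_\infty^{j_0+1,0}=\coker d_{j_0+1}$ is a proper quotient of $E_2^{j_0+1,0}$ and the edge map $\phi_{j_0+1,\F_p}\colon E_2^{j_0+1,0}\twoheadrightarrow E_\infty^{j_0+1,0}\hookrightarrow H^{j_0+1}_\et(Y,T,\F_p)$ is not injective --- contradicting the hypothesis on $\phi_{j_0+1,\F_p}$. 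Hence no such $j_0$ exists and (iii) follows. Everything outside this spectral-sequence bookkeeping is routine once the vanishing $H^1_\et(\widetilde{(Y,T)}(p),M)^{\pi(p)}=0$ and the identification of $H^0$ and $H^1$ of the pro-cover with group cohomology are in hand.
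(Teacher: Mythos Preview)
Your argument is correct. The first part---deducing that $\phi_{0,M},\phi_{1,M}$ are isomorphisms and $\phi_{2,M}$ is injective from the vanishing of $E_2^{0,1}$---is essentially what the paper does, only the paper states the slightly stronger fact $H^1_\et(\widetilde{(Y,T)}(p),\F_p)=0$ (equivalently: the pro-$p$ quotient of $\pi_1^\et(\widetilde{(Y,T)}(p))$ is trivial), from which $H^1_\et(\widetilde{(Y,T)}(p),M)=0$ for every $p$-torsion $M$ follows immediately since $M$ becomes constant on the cover.

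Where you genuinely diverge is in the cycle of implications. The paper proves $\text{(ii)}\Rightarrow\text{(i)}$ by d\'evissage, $\text{(iii)}\Rightarrow\text{(ii)}$ from the collapsed spectral sequence, and then $\text{(i)}\Rightarrow\text{(iii)}$ by a Shapiro-type trick: a class in $H^i_\et(\widetilde{(Y,T)}(p),\F_p)$ lives already on some finite intermediate cover $(Y',T')$ corresponding to an open $U\subset\pi(p)$, and applying (i) to the induced module $\mathrm{Ind}_{\pi(p)}^U\F_p$ shows that the class dies on a further finite cover. You instead close the cycle with a direct spectral-sequence contradiction for $\text{(ii)}\Rightarrow\text{(iii)}$, locating a minimal nonvanishing row $j_0$ and forcing $\phi_{j_0+1,\F_p}$ to have nontrivial kernel. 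Your route is entirely self-contained within the Hochschild--Serre spectral sequence and avoids invoking an \'etale Shapiro lemma (the identification of $H^i_\et(Y,T,\mathrm{Ind}_{\pi(p)}^U\F_p)$ with $H^i_\et(Y',T',\F_p)$); the paper's route is shorter and more conceptual once that identification is granted. Both are clean, and your chase is a nice illustration of how far the bare spectral-sequence structure carries you.
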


\begin{proof}
Nach Konstruktion gilt $H^1_\et(\widetilde{(Y,T)}(p), \F_p)=0$, was die erste Aussage zeigt. Gilt (ii), so folgt (i) zun\"{a}chst f\"{u}r jeden endlichen Modul $M$, weil $\pi_1^\et(Y,T)(p)$ eine Pro-$p$-Gruppe und also $\F_p$ der einzige einfache diskrete $\pi_1^\et(Y,T)(p)$-$p$-Torsionsmodul ist. Das Ergebnis \"{u}bertr\"{a}gt sich auf beliebiges $M$, weil sowohl die Kohomologie proendlicher Gruppen, als auch \'{e}tale Kohomologie mit gefilterten direkten Limiten kommutiert. Die Implikation  (iii)$\Rightarrow$(ii) kann direkt an der Spektralfolge abgelesen werden. Schlie{\ss}lich gelte~(i). Jede Klasse in $H^i_\et(\widetilde{(Y,T)}(p), \F_p)$ liegt bereits in $H^i_\et(Y',T', \F_p)$ f\"{u}r eine endliche Zwischen\"{u}berlagerung $(Y',T')$ von $\widetilde{(Y,T)}(p)\to(Y,T)$. Diese entspricht einer offenen Untergruppe $U\subset \pi_1^\et(Y,T)(p)$. Wendet man (i) auf den $\pi_1^\et(Y,T)(p)$-Modul $\mathrm{Ind}^U_{\pi_1^\et(Y,T)(p)} \F_p$ an, so sieht man, dass jedes $\alpha \in H^i_\et(Y', T', \F_p)$ bereits in $H^i_\et(Y'', T'', \F_p)$ verschwindet, wobei $(Y'',T'')$ eine geeignete endliche Zwischen\"{u}berlagerung von $\widetilde{(Y,T)}(p)\to(Y',T')$ ist. Dies zeigt (iii) und beendet den Beweis.
\end{proof}

\begin{definition} \label{kpi1def} Wenn die \"{a}quivalenten Aussagen von Lemma~\ref{kpi1lem} erf\"{u}llt sind, so sagen wir, dass $(Y, T)$ die {\bf\boldmath $K(\pi,1)$-Eigenschaft f\"{u}r $p$} hat.
\end{definition}

\begin{remark}
Lemma~\ref{kpi1lem} und Definition~\ref{kpi1def} dehnen sich in nat\"{u}rlicher Weise auf Pro-Objekte aus. Eine markierte Kurve $(Y,T)$ hat genau dann die $K(\pi,1)$-Eigenschaft f\"{u}r $p$, wenn dies f\"{u}r ihre universelle Pro-$p$-\"{U}berlagerung $\widetilde{(Y,T)}(p)$ der Fall ist.
\end{remark}

\section{Berechnung von Kohomologiegruppen}

Im Folgenden wollen wir die \'{e}tale Kohomologie markierter arithmetischer Kurven berechnen. Es sei $k$ ein lokaler oder globaler K\"{o}rper und $p\neq\text{char}(k)$ eine fixierte Primzahl. Mit $\mu_p$ bezeichnen wir die Gruppe der $p$-ten Einheitswurzeln und setzen $\delta=1$,  falls $\mu_p\subset k$ ist, und ansonsten $\delta=0$. Alle Kohomologiegruppen nehmen Werte in der konstanten Garbe $\F_p$ an, die wir von der Bezeichnung ausschlie{\ss}en.  Desweiteren benutzen wir die Bezeichnung
\[
h^i(-)=\dim_{\F_p} H^i_\et(-)\quad(=\dim_{\F_p} H^i_\et(-,\F_p)\;)\,.
\]
Wir beginnen mit einer lokalen Berechnung. F\"{u}r einen lokalen K\"{o}rper $k$, der keine Erweiterung von $\Q_p$ ist, benutzen wir die Konvention $[k:\Q_p]=0$. Desweiteren  bezeichnen  wir mit $H^i_\nr(k)$ die unverzweigte Kohomologie und setzen
\[
H^i_{/\nr}(k)= H^i(k)/H^i_\nr(k).
\]
Mit $A^\vee$ bezeichnen wir das Pontrjagin-Dual von $A$.
\renewcommand{\arraystretch}{1.2}
\begin{proposition}\label{localcoh} Es sei $k$ ein nichtarchimedischer lokaler K\"{o}rper mit von $p$ verschiedener Charakteristik.
Es sei $X=\Spec(\O_k)$, $x$ der abgeschlossene Punkt von~$X$ und $T=\varnothing$ oder $T=\{x\}$.
Dann verschwinden die lokalen \'{e}talen Kohomologiegruppen $H^i_x(X,T)$ f\"{u}r $i\leq 1$ und $i\geq 4$, und es gilt
\[
H^2_x(X,T)\cong \left\{
\begin{array} {ll}
H^1_{/\nr}(k)& \text{ falls } T=\varnothing,\\
H^1(k) & \text{ falls } T=\{x\},
\end{array}\right.
\]
also
\[
h^2_x(X,T)= \delta+[k:\Q_p] + \# T \,.
\]
Desweiteren gilt $H^3_x(X,T)\cong H^2(k)\cong \mu_p(k)^\vee$, also $h^3_x(X,T)=\delta$, und wir haben die Euler-Poincar\'{e}-Charakteristik-Formel
\[
\sum_{i=0}^3 (-1)^i h^i_x(X,T)= [k:\Q_p] + \# T\,.
\]
\end{proposition}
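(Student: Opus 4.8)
The plan is to reduce everything to the long exact localisation sequence attached to the closed point $x\in X$, with open complement $U=X\sm\{x\}=\Spec k$. Since $T\sm\{x\}=\varnothing$ in either case, this sequence reads
\[
\cdots \lang H^i_x(X,T) \lang H^i_\et(X,T) \lang H^i(k) \lang H^{i+1}_x(X,T) \lang \cdots ,
\]
where $H^i(k)=H^i(\Gal(\bar k|k),\F_p)$ is ordinary local Galois cohomology. I would freely use the classical facts $h^0(k)=1$, $h^2(k)=\delta$ and $H^i(k)=0$ for $i\geq 3$ (local Tate duality, which also gives $H^2(k)\cong\mu_p(k)^\vee$), $h^1(k)=1+\delta+[k:\Q_p]$ (local Euler--Poincar\'e characteristic), together with $h^1_\nr(k)=1$ and $H^2_\nr(k)=0$, whence $h^1_{/\nr}(k)=\delta+[k:\Q_p]$ and $H^2_{/\nr}(k)=H^2(k)$. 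Everything then comes down to the middle terms $H^i_\et(X,T)$.

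For $T=\varnothing$ the middle terms are standard: since $X=\Spec\O_k$ is henselian local with finite residue field $\F_q$, invariance of \'etale cohomology under henselisation gives $H^i_\et(X,\varnothing)\cong H^i_\et(\Spec\F_q,\F_p)=H^i(\hat\Z,\F_p)$, which is $\F_p$ for $i=0,1$ and $0$ for $i\geq 2$; moreover the map to $H^i(k)$ is the inflation along $\Gal(\bar k|k)\twoheadrightarrow\Gal(k^{\nr}|k)=\hat\Z$, hence injective in degree $1$ with image $H^1_\nr(k)$. The localisation sequence then yields at once $H^i_x(X,\varnothing)=0$ for $i\leq 1$ and $i\geq 4$, $H^2_x(X,\varnothing)\cong\coker\!\big(H^1_\et(X,\varnothing)\to H^1(k)\big)=H^1_{/\nr}(k)$, and $H^3_x(X,\varnothing)\cong\coker\!\big(0\to H^2(k)\big)=H^2(k)\cong\mu_p(k)^\vee$.

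The case $T=\{x\}$ is the heart of the matter: I would first show that $H^i_\et(X,\{x\})=0$ for all $i\geq 1$ (while $H^0_\et(X,\{x\})=\F_p$). To this end one analyses the site $(X,\{x\})_\et$ directly. Every object $Y'\to X$ of $\Et(X,\{x\})$ is a finite disjoint union of copies of $X$ and of spectra of finite separable extensions of $k$: a connected component that dominates $X$ is, over the henselian discrete valuation ring $\O_k$, the spectrum of an unramified extension ring $\O_{k'}$, and triviality of the residue field extension above $x$ forces $k'=k$; the remaining components map to the generic point $\Spec k$. Hence the topos of $(X,\{x\})_\et$ is the gluing of the topos of sets (the closed point carries no nontrivial \'etale neighbourhood in this site, since $\{\mathrm{id}_X\}$ refines every covering of $X$) with $(\Spec k)_\et$ along the morphism $\Spec k\hookrightarrow X$. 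Concretely, a sheaf of abelian groups on $(X,\{x\})_\et$ is the same as a triple $(A,M,\varphi)$ consisting of an abelian group $A$, a discrete $\Gal(\bar k|k)$-module $M$, and a homomorphism $\varphi\colon A\to M^{\Gal(\bar k|k)}$, with kernels and cokernels formed componentwise. The global sections functor $\Gamma(X,\{x\},-)$ is then the projection $(A,M,\varphi)\mapsto A$, which is exact; therefore all its higher derived functors vanish, which is the claim. Putting $H^0_\et(X,\{x\})=\F_p$, $H^{\geq 1}_\et(X,\{x\})=0$ into the localisation sequence (and noting $H^0_\et(X,\{x\})=\F_p\to H^0(k)=\F_p$ is the identity) gives $H^i_x(X,\{x\})=0$ for $i\leq 1$ and $i\geq 4$, $H^2_x(X,\{x\})\cong H^1(k)$, and $H^3_x(X,\{x\})\cong H^2(k)\cong\mu_p(k)^\vee$.

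The numerical assertions now follow: in both cases $h^2_x(X,T)=\delta+[k:\Q_p]+\#T$ (using $h^1_{/\nr}(k)=\delta+[k:\Q_p]$, respectively $h^1(k)=1+\delta+[k:\Q_p]$), $h^3_x(X,T)=\dim_{\F_p}\mu_p(k)^\vee=\delta$, and hence $\sum_{i=0}^{3}(-1)^i h^i_x(X,T)=h^2_x(X,T)-h^3_x(X,T)=[k:\Q_p]+\#T$. The one step that is not purely formal is the explicit description of the marked \'etale site in the case $T=\{x\}$, which is what pins down $H^\bullet_\et(X,\{x\})$; everything else is the localisation sequence combined with standard local Galois cohomology.
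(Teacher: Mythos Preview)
Your proof is correct and follows essentially the same route as the paper: both arguments compute $H^i_\et(X,T)$ first (using that $X$ is henselian for $T=\varnothing$, and that every covering of $X$ in $(X,\{x\})_\et$ is refined by $\{\mathrm{id}_X\}$ for $T=\{x\}$), then read off the local cohomology groups from the excision sequence together with the standard facts of local Galois cohomology. Your topos-theoretic description of sheaves on $(X,\{x\})_\et$ as triples $(A,M,\varphi)$ is more elaborate than the paper's one-line remark that there are no nontrivial coverings, but the content is the same---the crucial point being that $\Gamma(X,\{x\},-)$ is exact because the identity refines every cover.
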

\renewcommand{\arraystretch}{1}

\begin{proof}
Da $X$ henselsch ist, haben wir Isomorphismen $H^i_\et(X) \cong H^i_\et(x)$ f\"{u}r alle~$i$. Da es im Fall $T=\{x\}$ keine nichttrivialen \'{e}talen \"{U}berdeckungen von $(X,T)$ gibt, erhalten wir
\[
h^i(X,T)= \left\{
\begin{array}{cl}
1 & \hbox{ f\"{u}r } i=0,\\
1-\# T & \hbox{ f\"{u}r } i=1,\\
0 & \hbox{ f\"{u}r } i\geq 2.
\end{array}
\right.
\]
Au{\ss}erdem gilt $X\sm \{x\}=\Spec(k)$, also $H^i_\et(X\sm \{x\})\cong H^i(k)$. Der lokale Dualit\"{a}tssatz (siehe \cite{NSW}, Theorem 7.2.6) zeigt $H_\et^2(X\sm \{ x\})\cong \mu_p(k)^\vee$, und nach \cite{NSW}, Corollary 7.3.9, erhalten wir
\[
h^1(X\sm \{ x\})= 1+\delta+[k:\Q_p].
\]
Schlie{\ss}lich ist der nat\"{u}rliche Homomorphismus $H^1_\et(X) \to H^1_\et(X\sm \{ x\})$  injektiv. Daher erhalten wir die Aussage des Satzes aus der exakten Ausschneidungsfolge
\[
\cdots \to H^i_x(X,T) \to H^i_\et(X,T) \to H^i_\et(X\sm \{x\}) \to H^{i+1}_x(X,T) \to \cdots\;.
\]
\end{proof}

Nun sei $k$ ein globaler K\"{o}rper und $X$ das eindeutig bestimmte eindimensionale, regul\"{a}re, zusammenh\"{a}ngende und \"{u}ber $\Spec(\Z)$ eigentliche Schema mit Funktionenk\"{o}rper $k$ (also $X=\Spec(\O_k)$, wenn $k$ ein Zahlk\"{o}rper ist, und im Funktionenk\"{o}rperfall ist $X$ eine glatte, projektive Kurve \"{u}ber einem endlichen K\"{o}rper).  Es seien $S$ und $T$ disjunkte endliche Mengen nichtarchimedischer Stellen von $k$, also disjunkte endliche Mengen abgeschlossener Punkte von $X$.

\medskip\noindent
Wir bezeichnen mit $S_\infty$ die Menge der archimedischen Stellen von $k$ ($S_\infty=\varnothing$ im Funktionenk\"{o}rperfall).   In Galois-Terminologie (und ohne Erw\"{a}hnung des Basispunktes) gilt
\[
\pi_1^\et(X\sm S, T)= G_{S\cup S_\infty}^T(k):=\Gal (k_{S\cup S_\infty}^T|k),
\]
wobei $k_{S\cup S_\infty}^T$ die maximale Erweiterung von $k$ bezeichnet, die unverzweigt au{\ss}erhalb $S\cup S_\infty$ und voll zerlegt bei $T$ ist.
F\"{u}r eine Zwischenerweiterung $K|k$ von $k_{S\cup S_\infty}^T|k$ bezeichnen wir mit
\[
(X\sm S, T)_K
\]
die Normalisierung $(X\sm S)_K$ der Kurve $X\sm S$ in $K$, die in der Stellenmenge $T(K)$ der Fortsetzungen von Stellen von $T$ auf $K$ markiert ist. Ist $K|k$ endlich, so ist $(X\sm S, T)_K$ ein Objekt in $\FEt(X\sm S,T)$, ansonsten ein Pro-Objekt.

\medskip
Sei nun $p\neq \text{char}(k)$ eine fixierte Primzahl. Mit
\[
k_{S\cup S_\infty}^{T,\el}
\]
bezeichnen wir die maximale elementar-abelsche $p$-Erweiterung von $k$ in $k_{S\cup S_\infty}^T$ und bemerken, dass
\[
G(k_{S\cup S_\infty}^{T,\el}|k) \cong H^1_\et(X\sm S, T)^\vee
\]
gilt. Mit $S_p$ bezeichnen wir die Menge der Teiler von $p$ (also $S_p=\varnothing$ im Funktionenk\"{o}rperfall).  Nimmt man im Zahlk\"{o}rperfall $p\neq 2$ oder $k$ total imagin\"{a}r an, so k\"{o}nnen wir die archimedischen Stellen ignorieren, d.h.\ es gilt
\[
G_{S}^T(k)(p)=G_{S\cup S_\infty}^T(k)(p)\quad \big(= \pi_1^\et(X\sm S, T)(p)\big).
\]
Teil (iv) unseres Hauptresultats Theorem~\ref{haupt} besagt, dass wir f\"{u}r $p\neq 2$, $p\neq \text{char}(k)$, durch Hinzunahme endlich vieler Stellen zu $S$ erreichen, dass $(X\sm S,T)$ die $K(\pi,1)$-Eigenschaft f\"{u}r $p$ hat. Dies ist im Spezialfall $S\supset S_p$ und $T=\varnothing$ schon ohne die Hinzunahme von Stellen wohlbekannt:

\begin{proposition} \label{pinS}
Es sei $k$ ein globaler K\"{o}rper, $p\neq \text{char}(k)$ eine Primzahl und $S\supset S_p$ eine endliche, nichtleere Stellenmenge von~$k$. Dann sind f\"{u}r jeden diskreten $G_{S\cup S_\infty}(k)$-$p$-Torsionsmodul $M$ die nat\"{u}rlichen Abbildungen
\[
H^i(G_{S\cup S_\infty}(k), M) \lang H^i_\et(X\sm S, M)
\]
Isomorphismen f\"{u}r alle $i\geq 0$. Ist $M$ \"{u}berdies ein diskreter $G_{S\cup S_\infty}(k)(p)$-$p$-Torsions\-modul, so sind auch die nat\"{u}rlichen Abbildungen
\[
H^i(G_{S\cup S_\infty}(k)(p), M) \lang H^i(G_{S\cup S_\infty}(k), M)
\]
Isomorphismen f\"{u}r alle $i\geq 0$.
\end{proposition}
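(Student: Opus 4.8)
Ich w\"{u}rde den Beweis entsprechend den beiden Aussagen in zwei Schritte gliedern.

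\emph{Zur ersten Aussage.} Diese besagt genau, dass $X\sm S$ -- bez\"{u}glich des \emph{vollen} \'{e}talen Situs, also mit $\pi_1^\et(X\sm S)=G_{S\cup S_\infty}(k)$ -- ein $K(\pi,1)$ f\"{u}r $p$ ist. Nach einem zu Lemma~\ref{kpi1lem} analogen Argument -- wobei die dortige Reduktion auf den einzigen einfachen Modul einer Pro-$p$-Gruppe dadurch zu ersetzen ist, dass die universelle \"{U}berlagerung einfach zusammenh\"{a}ngend ist, jeder $p$-Torsionsmodul auf ihr also konstant wird und daher die D\'{e}vissage $0\to\F_p\to\Z/p^a\to\Z/p^{a-1}\to 0$ anwendbar ist -- gen\"{u}gt es,
\[
\varinjlim_F H^i_\et\big(\Spec\,\O_{F,S},\F_p\big)=0\qquad(i\geq 1)
\]
zu zeigen, wobei $F$ die endlichen Zwischenk\"{o}rper von $k_{S\cup S_\infty}|k$ durchl\"{a}uft; dieser Kolimes ist gerade $H^i_\et$ der universellen \"{U}berlagerung $\Spec\,\O_{k_{S\cup S_\infty},S}$. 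F\"{u}r $i=1$ folgt dies aus der Maximalit\"{a}t von $k_{S\cup S_\infty}$. F\"{u}r $i\geq 3$ verschwindet schon jeder einzelne Term, denn $\Spec\,\O_{F,S}$ hat f\"{u}r $p$ ungerade die $p$-kohomologische Dimension $\leq 2$ (man kombiniere die Ausschneidungsfolge mit Proposition~\ref{localcoh}, der bekannten Schranke f\"{u}r die $p$-kohomologische Dimension der absoluten Galoisgruppe von $F$ und dem Hasseprinzip; hier geht $S\neq\varnothing$ ein). F\"{u}r $i=2$ benutzt man $k_{S\cup S_\infty}\supseteq k(\mu_p)$, so dass im Kolimes $\F_p\cong\mu_p$ wird, zusammen mit der Kummer-Folge
\[
0\to\Pic(\O_{F,S})/p\to H^2_\et(\Spec\,\O_{F,S},\mu_p)\to{}_p\Br(\O_{F,S})\to 0:
\]
der linke Kolimes verschwindet durch Kapitulation (ist $\mathfrak a^{p^a}=(\alpha)$, so ist $\mathfrak a\,\O_{F(\alpha^{1/p^a}),S}=(\alpha^{1/p^a})$ ein Hauptideal, und $F(\alpha^{1/p^a})\subseteq k_{S\cup S_\infty}$), der rechte, weil jede Klasse aus ${}_p\Br(\O_{F,S})$ ihren Tr\"{a}ger in $S\cup S_\infty$ hat und schon \"{u}ber einem geeigneten $F(\mu_{p^N})\subseteq k_{S\cup S_\infty}$ verschwindet (ihre lokalen Invarianten an den Stellen aus $S$ werden durch $p$ teilbar).

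\emph{Zur zweiten Aussage.} Sei $N=\ker\big(G_{S\cup S_\infty}(k)\to G_{S\cup S_\infty}(k)(p)\big)$; dann ist $N=G_{S\cup S_\infty}\big(k_{S\cup S_\infty}(p)\big)$, und $M$ ist ein trivialer $N$-Modul. Die Hochschild-Serre-Spektralfolge (samt D\'{e}vissage) reduziert die Behauptung auf $H^i(N,\F_p)=0$ f\"{u}r alle $i\geq 1$. F\"{u}r $i=1$: ein stetiger Homomorphismus $N\to\F_p$ lieferte eine $\Z/p$-Erweiterung von $k_{S\cup S_\infty}(p)$, unverzweigt au{\ss}erhalb $S$, deren normale H\"{u}lle \"{u}ber $k$ -- als Erweiterung der Pro-$p$-Gruppe $G_{S\cup S_\infty}(k)(p)$ durch eine elementar-abelsche $p$-Gruppe -- wieder pro-$p$ und unverzweigt au{\ss}erhalb $S$, also in $k_{S\cup S_\infty}(p)$ enthalten w\"{a}re. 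F\"{u}r $i\geq 3$ ist $H^i(N,\F_p)=\varinjlim_F H^i_\et(\Spec\,\O_{F,S},\F_p)=0$ (der Kolimes l\"{a}uft nun \"{u}ber die endlichen Zwischenk\"{o}rper $F$ von $k_{S\cup S_\infty}(p)|k$) aus demselben Grund wie im ersten Schritt.

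Das eigentliche Kernst\"{u}ck ist $H^2(N,\F_p)=0$. Hierf\"{u}r w\"{u}rde ich wieder die Ausschneidungsfolge f\"{u}r $\Spec\,\O_{k_{S\cup S_\infty}(p),S}$ heranziehen (gleichbedeutend: die Poitou-Tate-Folge im Limes): der darin auftretende \glqq globale\grqq\ $H^2$-Term verschwindet, weil $k_{S\cup S_\infty}(p)$ eine $\Z_p$-Erweiterung von $k$ enth\"{a}lt, in der keine Stelle vollst\"{a}ndig zerf\"{a}llt (die zyklotomische bzw.\ die konstante), so dass die lokalen Grade \"{u}berall durch $p$ teilbar werden; damit wird $H^2(N,\F_p)$ zum Kokern der Verzweigungsabbildung in die lokalen Quotienten $H^1_{/\nr}$ an den Stellen au{\ss}erhalb $S$. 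Deren Surjektivit\"{a}t -- also die Existenz hinreichend vieler au{\ss}erhalb $S$ verzweigter $\Z/p$-Erweiterungen von $k_{S\cup S_\infty}(p)$ mit vorgeschriebenem lokalem Verhalten -- ist der Grunwald-Wang-artige Kern der Sache; sie wird dadurch erschwert, dass $k_{S\cup S_\infty}(p)$ im Allgemeinen keine primitive $p$-te Einheitswurzel enth\"{a}lt, so dass reine Kummer-Theorie nicht ausreicht und stattdessen die volle arithmetische Dualit\"{a}t (das Verschwinden der zugeh\"{o}rigen $\Sha$-Gruppen im Turm) ben\"{o}tigt wird. Beide Aussagen sind jedoch klassisch; f\"{u}r die Details w\"{u}rde ich auf \cite{NSW} verweisen.
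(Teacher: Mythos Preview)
Your sketch is substantially more detailed than the paper's proof, which consists of two bare citations: Milne~\cite{Mi}, II, Proposition~2.9 for the first assertion and Neumann's theorem (\cite{NSW}, Corollary~10.4.8) for the second. So you are not reproducing the paper's argument; you are outlining the content behind those citations.

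Your treatment of the first assertion via the vanishing of $H^i_\et$ of the universal pro-\'{e}tale cover is sound. The capitulation argument for $\Pic/p$ and the local-degree argument for the Brauer piece are both correct. One small point: you restrict the $i\geq 3$ vanishing to $p$ odd, whereas the proposition makes no such assumption; for $p=2$ one should instead observe that cofinally many $F$ in the tower are totally imaginary, so the colimit still vanishes.

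For the second assertion, your reductions for $i=1$ (normal closure argument) and $i\geq 3$ (via the first part) are fine. At $i=2$ you rightly isolate $H^2(N,\F_p)=0$ as the heart of the matter --- this is exactly Neumann's theorem --- but your description of the excision/Poitou--Tate mechanism is garbled: you write that $H^2(N,\F_p)$ becomes the cokernel of a ramification map into $H^1_{/\nr}$ at the places \emph{outside} $S$, which cannot be what you mean (those places are unramified). Presumably you intend the local terms at places \emph{in} $S$, coming from $H^2_\p(X_K)\cong H^1_{/\nr}(K_\p)$. In any case you then defer to \cite{NSW} for the actual argument, which is also what the paper does; so on the substantive point you and the paper agree, and neither supplies an independent proof of Neumann's result.
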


\begin{proof}
F\"{u}r einen Beweis der ersten Aussage siehe \cite{Mi}, II, Proposition 2.9.
Die zweite Aussage ist ein Resultat von O.~Neumann, siehe \cite{NSW}, Corollary 10.4.8.
\end{proof}

\medskip
Wir benutzen nachfolgend die folgenden Bezeichnungen, wobei $p$ stets eine Primzahl ungleich $\mathrm{char}(k)$ sei.
\begin{tabbing}
\quad \= $r_1$\hspace*{1cm} \= die Anzahl der reellen Stellen von $k$  \\
\>$r_2$\ \> die Anzahl der komplexen Stellen von $k$\\
\>$r$\ \> $=r_1+r_2$, die Anzahl der archimedischen Stellen von $k$\\
\> $\delta$ \> gleich $1$, wenn $\mu_p\subset k$, und ansonsten gleich $0$\\
\> $\delta_\p$ \> gleich $1$, wenn $\mu_p\subset k_\p$, und ansonsten gleich $0$\\
\> $\Cl_S(k)$\> $=\Pic(X\sm S)$, die $S$-Idealklassengruppe von $k$\\
\> $E_{k,S}$ \> $=H^0(X\sm S,\G_m)$, die $S$-Einheitengruppe von $k$\\
\> $_n A$\> $=\ker(A \stackrel{\cdot n}{\to} A)$, wobei $A$ eine abelsche Gruppe ist, und $n\in \N$ \\
\> $A/n$\> $=\coker(A \stackrel{\cdot n}{\to} A)$, wobei $A$ eine abelsche Gruppe ist, und $n\in \N$.\\
\end{tabbing}
Falls $k$ positive Charakteristik hat, so gilt $r_1=r_2=r=0$ und $E_{k,\varnothing}$ ist die multiplikative Gruppe des endlichen Konstantenk\"{o}rpers.  Wie zu\-vor schlie{\ss}en wir in Kohomologiegruppen die konstanten Koeffizienten $\F_p$ von der Notation aus. Die Voraussetzung $p\neq 2$ oder $k$ total imagin\"{a}r im Zahlk\"{o}rperfall wird  in der ganzen Arbeit gemacht. Daher vereinbaren wir die folgende Konvention:

\medskip
\begin{minipage}{12cm}
{\em Der Begriff Stellenmenge meint stets eine Menge nichtarchimedischer Stellen.}
\end{minipage}

\medskip\noindent

\begin{proposition}\label{globalchi} Es seien $S$ und\/ $T$ disjunkte, endliche Stellenmengen des globalen K\"{o}r\-pers $k$.  Im Zahlk\"{o}rperfall sei \"{u}berdies  $p\neq 2$ oder $k$ total imagin\"{a}r. Dann ver\-schwin\-den die Gruppen $H^i_\et(X\sm S,T)=0$ f\"{u}r\/ $i\geq 4$ und es gilt
\[
\chi(X\sm S,T):= \sum_{i=0}^3 (-1)^i h^i(X\sm S,T)= r + \# T - \sum_{\p \in S\cap S_p} [k_\p:\Q_p]\;.
\]
\end{proposition}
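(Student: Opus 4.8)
The strategy is to reduce, by means of the excision sequences of Section~\ref{mark-sec} and the local computation of Proposition~\ref{localcoh}, to the case $T=\varnothing$, $S\supseteq S_p$, $S\neq\varnothing$, where the statement follows from Proposition~\ref{pinS} together with Tate's global Euler--Poincar\'{e} characteristic formula. All cohomology has coefficients in $\F_p$, and the running hypothesis ($p$ odd or $k$ totally imaginary in the number field case) is in force throughout.

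First I would remove the marking. Since $T\cap S=\varnothing$, the set $T$ is a finite proper closed subset of $X\sm S$, with $(X\sm S)\sm T=X\sm(S\cup T)$ and $T\sm T=\varnothing$; excision yields a long exact sequence
\[
\cdots\to H^i_T(X\sm S,T)\to H^i_\et(X\sm S,T)\to H^i_\et(X\sm(S\cup T))\to H^{i+1}_T(X\sm S,T)\to\cdots
\]
with $H^i_T(X\sm S,T)\cong\bigoplus_{x\in T}H^i_x((X\sm S)^h_x,\{x\})$. As the proof of Proposition~\ref{localcoh} uses only henselianness, it applies to each henselization $(X\sm S)^h_x$ with its ``$T$'' taken to be $\{x\}$, and shows that these local groups vanish for $i\leq1$ and $i\geq4$ and satisfy $\sum_i(-1)^i h^i_x=[k_x:\Q_p]+1$. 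Hence $H^i_\et(X\sm S,T)\cong H^i_\et(X\sm(S\cup T))$ for $i\geq4$, and, provided the vanishing in degrees $\geq4$ is known (so that all sequences below have finitely many nonzero terms),
\[
\chi(X\sm S,T)=\chi(X\sm(S\cup T),\varnothing)+\#T+\sum_{\p\in T}[k_\p:\Q_p].
\]
Since the tame primes of $T$ drop out, this reduces the assertion to the unmarked assertion for $X\sm S'$ with $S'=S\cup T$.

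Next I would enlarge $S'$. For a finite set $S''$ of closed points of $X$ disjoint from $S'$, excision applied to $S''\subset X\sm S'$ (now with empty marking, so the local terms are $H^i_x((X\sm S')^h_x,\varnothing)$, again vanishing for $i\leq1$ and $i\geq4$, with $\sum_i(-1)^i h^i_x=[k_x:\Q_p]$) shows that vanishing in degrees $\geq4$ is unaffected on replacing $S'$ by $S'\cup S''$, and that $\chi(X\sm(S'\cup S''),\varnothing)=\chi(X\sm S',\varnothing)-\sum_{\p\in S''\cap S_p}[k_\p:\Q_p]$. Taking $S''=S_p\sm S'$ in the number field case, and, when $k$ is a function field with $S'=\varnothing$, any single closed point, reduces us to $S'\supseteq S_p$ with $S'\neq\varnothing$.

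Finally, for $S'\supseteq S_p$ with $S'\neq\varnothing$, Proposition~\ref{pinS} identifies $H^i_\et(X\sm S')$ with $H^i(G_{S'\cup S_\infty}(k),\F_p)$ for every $i$. The running hypothesis implies that $G_{S'\cup S_\infty}(k)$ has $p$-cohomological dimension $\leq2$, which gives the vanishing in degrees $\geq4$ (and thereby retroactively justifies the Euler characteristic manipulations above), and Tate's global Euler--Poincar\'{e} characteristic formula yields $\chi(G_{S'\cup S_\infty}(k),\F_p)=-r_2$. Since $S_p\subseteq S'$, the sum $\sum_{\p\in S'\cap S_p}[k_\p:\Q_p]$ equals $[k:\Q]=r_1+2r_2$, so $-r_2=r-\sum_{\p\in S'\cap S_p}[k_\p:\Q_p]$ as $r=r_1+r_2$; in the function field case all these quantities are $0$. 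Unwinding the two reduction steps gives the formula in general. The only genuinely external inputs here are Tate's formula and the cohomological dimension bound; the real point requiring care is the bookkeeping of which primes contribute to $\chi$ through the local terms of Proposition~\ref{localcoh}, and it is precisely via the dimension bound and the archimedean factor of Tate's formula that the exclusion $p\neq2$ (in the number field case) is used.
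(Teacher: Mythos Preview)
Your argument is correct and follows essentially the same strategy as the paper: reduce by two applications of excision together with Proposition~\ref{localcoh} to the known base case $T=\varnothing$, $S\supset S_p$, $S\neq\varnothing$. The only cosmetic differences are that the paper routes the two excision steps through the intermediate object $(X,T)$ (first comparing $(X,T)$ to $X\sm(S_p\cup T)$, then $(X,T)$ to $(X\sm S,T)$), whereas you go via the unmarked curve $X\sm(S\cup T)$ and then enlarge; and for the base case the paper quotes Milne's \'etale Euler characteristic directly, while you obtain the same input from Proposition~\ref{pinS} combined with Tate's formula and the bound $\mathrm{cd}_p\, G_{S'\cup S_\infty}(k)\leq 2$.
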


\begin{proof} Im Fall $T=\varnothing$ und $S\supset S_p$ ist die Aussage wohlbekannt: nach \cite{Mi}, II, Theorem~2.13\,(a), hat man $\chi(X\sm S)=-r_2$ und au{\ss}erdem gilt
\[
r - \sum_{\p\in S_p} [k_\p :\Q_p] = r_1+r_2 - [k:\Q]= -r_2,
\]
wobei im Funktionenk\"{o}rperfall $[k:\Q]=0$ gesetzt sei.
Wir betrachten die exakte Ausschneidungsfolge
\[
\cdots \to \bigoplus_{\p \in S\cup T} H^i_\p(X_\p, T_\p) \to H^i_\et(X,T) \to H^i_\et (X\sm (S\cup T)) \to  \cdots \ ,
\]
wobei $X_\p$ die Komplettierung von $X$ bei $\p$ ist (die Kohomologiegruppen der Komplettierung und der Henselisierung stimmen \"{u}berein). Man erh\"{a}lt das Ergebnis f\"{u}r $S=\varnothing$ und beliebiges $T$ durch Benutzung dieser Ausschneidungsfolge f\"{u}r $S=S_p$ und durch Anwendung von Satz~\ref{localcoh}.  Das Resultat f\"{u}r beliebiges $S$ erh\"{a}lt man aus dem Fall $S=\varnothing$ mit Hilfe der Ausschneidungsfolge
\[
\cdots \to \bigoplus_{\p \in S} H^i_\p(X_\p) \to H^i_\et(X,T) \to H^i_\et (X\sm S,  T) \to  \cdots \ ,
\]
und einer erneuten Anwendung von Satz~\ref{localcoh}.
\end{proof}

\begin{corollary}\label{ausschneid2} Es seien $S$ und\/ $T$ disjunkte, endliche Stellenmengen des globalen K\"{o}rpers $k$. Dann erhalten wir eine  exakte Folge
\[
 H^1_\et(X\sm S,T) \hookrightarrow H^1_\et(X\sm S) \to \bigoplus_{\p\in T} H^1_{\nr}(k_\p) \to H^2_\et(X\sm S,T) \twoheadrightarrow H^2_\et(X\sm S)
\]
und Isomorphismen $H^i(X\sm S,T)\liso H^i(X\sm S)$ f\"{u}r $i\geq 3$.
\end{corollary}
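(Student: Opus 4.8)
The plan is to derive everything by comparing two excision sequences. Write $Y=X\sm S$; since $T\cap S=\varnothing$ we have $T\subset Y$, and for $\p\in T$ the henselisation $Y_\p^h$ coincides with the henselisation $X_\p^h$ of $X$ at $\p$. Applying the excision sequence of Section~\ref{mark-sec} to the marked curve $(Y,T)$ with support set $M=T$ (so that $Y\sm M=X\sm(S\cup T)$, marked in $T\sm M=\varnothing$) and to the unmarked curve $Y$ with the same support set, I obtain the two long exact sequences
\[
\cdots \to \bigoplus_{\p\in T} H^i_\p(X_\p^h,\{\p\}) \to H^i_\et(X\sm S,T) \to H^i_\et(X\sm(S\cup T)) \to \cdots
\]
\[
\cdots \to \bigoplus_{\p\in T} H^i_\p(X_\p^h) \to H^i_\et(X\sm S) \to H^i_\et(X\sm(S\cup T)) \to \cdots .
\]
The morphism of marked sites $\iota\colon(X\sm S)_\et\to(X\sm S,T)_\et$, which is the identity away from $T$, induces a morphism from the first sequence to the second: it is the identity on every term $H^i_\et(X\sm(S\cup T))$, and on the local terms it is the natural map $H^i_\p(X_\p^h,\{\p\})\to H^i_\p(X_\p^h)$.

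Next I feed in Satz~\ref{localcoh}: the groups $H^i_\p(X_\p^h,\{\p\})$ and $H^i_\p(X_\p^h)$ vanish for $i\neq 2,3$; in degree $3$ both equal $H^2(k_\p)$ with the transition map an isomorphism; in degree $2$ they are $H^1(k_\p)$ and $H^1_{/\nr}(k_\p)$. In the latter degree, as the proof of Satz~\ref{localcoh} shows, each arises from the excision boundary out of $H^1(k_\p)=H^1_\et(X_\p^h\sm\{\p\})$, with kernel $0$ resp.\ $H^1_\nr(k_\p)$; since $\iota$ is the identity on $X_\p^h\sm\{\p\}$, these two boundaries have the same source and the transition map in degree $2$ is the canonical projection $\pi_\p\colon H^1(k_\p)\twoheadrightarrow H^1_{/\nr}(k_\p)$, with kernel $H^1_\nr(k_\p)$. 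Put $K=\bigoplus_{\p\in T}H^1_\nr(k_\p)=\ker(\pi)$, where $\pi=\bigoplus_\p\pi_\p$.

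With this input the statement is a diagram chase in the morphism of long exact sequences. For $i\geq 4$ the local terms vanish, so both maps into $H^i_\et(X\sm(S\cup T))$ are isomorphisms and hence $H^i_\et(X\sm S,T)\liso H^i_\et(X\sm S)$; for $i=3$ the same conclusion follows from a short additional chase using $H^4_\p(X_\p^h,\{\p\})=0$ (a priori $H^3_\et(X\sm S,T)\to H^3_\et(X\sm(S\cup T))$ need only be surjective). In the range $i\le 2$, surjectivity of $\pi$ gives successively: $H^1_\et(X\sm S,T)\to H^1_\et(X\sm S)$ is injective; its cokernel embeds into $K$ with image the kernel of the connecting map $K\to H^2_\et(X\sm S,T)$; the image of that connecting map is the kernel of $H^2_\et(X\sm S,T)\to H^2_\et(X\sm S)$; and this last map is surjective. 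Splicing these four facts yields the asserted five-term exact sequence, whose middle term is $\bigoplus_{\p\in T}H^1_\nr(k_\p)$.

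The argument has no serious obstacle; the only point requiring care is the explicit identification of the degree-$2$ transition map with $\pi$, which is immediate from the proof of Satz~\ref{localcoh}, everything else being mechanical. If one prefers to avoid the chase, one can work in the derived category: the two localisation triangles form a morphism of triangles which is the identity on $R\Gamma_\et(X\sm(S\cup T))$, so the cone of $R\Gamma_\et(X\sm S,T)\to R\Gamma_\et(X\sm S)$ is isomorphic to the cone of $\bigoplus_\p R\Gamma_\p(X_\p^h,\{\p\})\to\bigoplus_\p R\Gamma_\p(X_\p^h)$; by Satz~\ref{localcoh} this cone has cohomology only in degree $1$, equal to $\bigoplus_{\p\in T}H^1_\nr(k_\p)$, and the long exact cohomology sequence of the first triangle is exactly the assertion.
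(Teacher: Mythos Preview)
Your argument is correct and follows exactly the route sketched in the paper: compare the excision sequences for $(X\sm S,T)$ and for $X\sm S$, both relative to $X\sm(S\cup T)$, and feed in Satz~\ref{localcoh}. The paper compresses this into a single sentence, whereas you have spelled out the identification of the degree-$2$ transition map with the projection $H^1(k_\p)\twoheadrightarrow H^1_{/\nr}(k_\p)$ and the ensuing diagram chase (and added a derived-category repackaging); but the underlying proof is the same.
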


\begin{proof}
Dies folgt aus Satz~\ref{localcoh} durch den Vergleich der Ausschneidungsfolgen f\"{u}r  $X\sm S$ und $X\sm (S\cup T)$ sowie f\"{u}r $(X\sm S,T)$ und $X\sm (S\cup T)$.
\end{proof}

Um Formeln f\"{u}r die individuellen Kohomologiegruppen angeben zu k\"{o}nnen, f\"{u}hren wir die Kummergruppe $V_S^T(k)$ ein.
Es seien $S$ und $T$ endliche disjunkte Stellenmengen von $k$ und $p\neq \textrm{char}(k)$ eine fixierte Primzahl, die wir von der Notation ausschlie{\ss}en. Wir setzen

\smallskip\noindent
\[
V^T_{S}(k):=
\{ a \in k^\times\,|\, a \in k_v^{\times p}
\hbox{ f\"{u}r } v \in {S}\,\,\hbox{und}\,\, a \in U_v k_v^{\times p}
\hbox{ f\"{u}r } v \notin T \}/ k^{\times p}, \smallskip
\]
wobei $U_v$ die Einheitengruppe des lokalen K\"{o}rpers $k_v$ bezeichnet (Konvention:
$U_v=k_v^\times$, wenn\/~$v$ archimedisch ist). In Termen von flacher Kohomologie gilt
\[
V_S^T(k)=\ker\big(H^1_\fl(X\sm (S\cup T),\mu_p) \lang \prod_{\p\in S} H^1(k_\p,\mu_p)\big).
\]

\begin{lemma}\label{VSchange}
F\"{u}r $S=\varnothing$ gibt es eine nat\"{u}rliche exakte Folge
\[
0\longrightarrow E_{k,T} /p \longrightarrow V_\varnothing^T(k) \longrightarrow \null_p \Cl_T(k) \longrightarrow 0\,.
\]
Insbesondere gilt
$
\dim_{\F_p} V_\varnothing^T(k) =  \dim_{\F_p} \null_p \Cl_T(k) + r-1+\# T + \delta
$, es sei denn $k$ ist ein Funktionenk\"{o}rper und $T=\varnothing$, in welchem Fall $\dim_{\F_p} V_\varnothing^\varnothing(k)=\dim_{\F_p} \null_p \Cl(k)+\delta$ gilt.
F\"{u}r beliebiges $S$ und jede weitere Stelle \/ $\p\notin S\cup T$  haben wir die exakte Folge
\[
0 \longrightarrow V_{S \cup \{\p\}}^T(k)\stackrel{\phi}{\longrightarrow} V_S^T(k)  \longrightarrow U_\p k_\p^{\times p}/k_\p^{\times p}.
\]
Insbesondere ist $V_S^T(k)$ stets endlich. F\"{u}r $\p\notin S_p$ gilt
$
\dim_{\F_p} \coker(\phi)\leq \delta_\p$.
\end{lemma}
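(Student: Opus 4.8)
\emph{Beweisansatz.}
The plan is to deal with the five assertions in turn, the substantive one being the exact sequence for $S=\varnothing$. Since the condition imposed at the (empty) set $S$ is vacuous, $V_\varnothing^T(k)$ consists of the classes $a\,k^{\times p}$ with $v(a)\equiv 0\bmod p$ for every closed point $v\notin T$; equivalently, the principal divisor $(a)$ can be written $(a)=p\mathfrak b+\mathfrak c$ with $\mathfrak c$ supported on $T$. Sending $a$ to the class of $\mathfrak b$ in $\Cl_T(k)=\Pic(X\sm T)$ then defines the map $V_\varnothing^T(k)\to{}_p\Cl_T(k)$: one checks that $\mathfrak b$ is well defined modulo divisors supported on $T$ and modulo $p$-th powers, that $p[\mathfrak b]=0$ in $\Cl_T(k)$ because $(a)$ is principal, and — for surjectivity — that a relation $p\mathfrak b=(a)+\mathfrak c$ exhibits any $p$-torsion class as the image of such an $a$. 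The kernel consists precisely of those $a$ for which $a/c^p$ is a $T$-unit for some $c\in k^\times$, hence of the image of $E_{k,T}/p$; and $E_{k,T}/p\to V_\varnothing^T(k)$ is injective because a $T$-unit that is a $p$-th power in $k^\times$ is already the $p$-th power of a $T$-unit. This yields the first exact sequence.

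The dimension formula then follows by additivity of $\dim_{\F_p}$ in this short exact sequence together with the Dirichlet $T$-unit theorem, which gives $E_{k,T}\cong\mu(k)\times\Z^{\rho}$ with $\rho=r+\#T-1$ in the number field case and $\rho=\#T-1$ in the function field case, so $\dim_{\F_p}E_{k,T}/p=\rho+\delta$ (the torsion $\mu(k)$ contributes $\delta$, being cyclic of order divisible by $p$ iff $\mu_p\subset k$). The only case that does not fit the uniform expression $r-1+\#T$ is a function field with $T=\varnothing$: there $E_{k,\varnothing}=\F_q^\times$ is finite, of rank $0$ rather than $-1$, which accounts for the stated exception. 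One also notes that ${}_p\Cl_T(k)$ is finite throughout (a quotient of the finite class group for number fields; equal to ${}_p\Pic^0(X)$ for function fields with $T=\varnothing$), so the formula genuinely computes $\dim_{\F_p}V_\varnothing^T(k)$.

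The remaining assertions are soft. For arbitrary $S$ and $\p\notin S\cup T$, the sequence $0\to V_{S\cup\{\p\}}^T(k)\to V_S^T(k)\to U_\p k_\p^{\times p}/k_\p^{\times p}$ is essentially tautological: $V_{S\cup\{\p\}}^T(k)$ is by definition the subgroup of $V_S^T(k)$ cut out by the extra condition $a\in k_\p^{\times p}$, and since $\p\notin T$ already forces $a\in U_\p k_\p^{\times p}$ for $a\in V_S^T(k)$, the kernel of the reduction map $V_S^T(k)\to U_\p k_\p^{\times p}/k_\p^{\times p}$ is exactly $V_{S\cup\{\p\}}^T(k)$. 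As every $V_S^T(k)$ is a subgroup of $V_\varnothing^T(k)$, finiteness of $V_S^T(k)$ drops out of the dimension formula. Finally, $\coker(\phi)$ embeds into $U_\p k_\p^{\times p}/k_\p^{\times p}\cong U_\p/U_\p^p$; for $\p\notin S_p$ the residue characteristic of $k_\p$ is different from $p$, so the principal units are uniquely $p$-divisible and $U_\p/U_\p^p\cong\mu(k_\p)/p$ has dimension $\delta_\p$, whence $\dim_{\F_p}\coker(\phi)\le\delta_\p$. No step is a real obstacle; the only point requiring care is the bookkeeping in the function field case — the $\Z$-part of $\Cl(k)$ and the consequent exception in the dimension formula — and making sure the connecting homomorphism in the first exact sequence is checked to be well defined.
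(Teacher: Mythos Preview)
Your proof is correct and follows essentially the same approach as the paper: you construct the map $V_\varnothing^T(k)\to{}_p\Cl_T(k)$ by writing $(a)$ as a $p$-th power times something supported on $T$, identify the kernel via the $T$-units, and invoke Dirichlet's unit theorem for the dimension count; the remaining assertions are handled identically. The paper's proof is just a terser version of what you wrote (using multiplicative ideal notation $(a)=\mathfrak a^p$ in place of your additive divisor notation), and it omits the well-definedness and injectivity checks that you spell out.
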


\begin{proof}
Zu $a\in V_\varnothing^T(k)$ gibt es ein eindeutig bestimmtes gebrochenen $T$-Ideal mit $(a)={\mathfrak a}^p$. Die Zuordnung $a\mapsto [\a]$ induziert einen surjektiven Homomorphismus $V_\varnothing^T(k) \to \null_p\Cl_T(k)$ mit Kern $E_{k,T}/p$. Zusammen mit dem Dirichletschen Einheitensatz zeigt dies die erste Aussage. Die zweite exakte Folge ergibt sich direkt aus den Definitionen der vorkommenden Objekte. F\"{u}r $\p\notin S_p$ gilt $\dim_{\F_p} U_\p k_\p^{\times p}/k_\p^{\times p} = \delta_\p$, was die letzte Aussage zeigt.
\end{proof}

\begin{theorem} \label{globcoh} Es seien $S$ und\/ $T$ disjunkte, endliche Stellenmengen des globalen K\"{o}rpers $k$ und $p$ eine Primzahl ungleich $\mathrm{char}(k)$. Im Zahlk\"{o}rperfall sei \"{u}berdies  $p\neq 2$ oder $k$ total imagin\"{a}r. Dann gilt $H^i_\et(X\sm S,T)=0$ f\"{u}r $i\geq 4$
und
\[
\renewcommand{\arraystretch}{1.3}
\begin{array}{lcl}
h^0(X \sm S,T)&=& 1\, ,\\
h^1(X \sm S,T) & = & 1+  \ds\sum_{\p\in S} \delta_\p
               - \delta + \dim_{\F_p}V_S^T(k) +  \ds\sum_{\p\in S\cap S_p} [k_\p:\Q_p] -r - \# T,\\
h^2(X \sm S,T)& =& \ds\sum_{\p\in S} \delta_\p
               - \delta + \dim_{\F_p}V_S^T(k)  +\theta \, ,\\
h^3(X \sm S,T)&=& \theta\, .\\
\end{array}
\renewcommand{\arraystretch}{1}
\]
Hierbei ist $\theta$ gleich $1$ falls\/ $\delta =1$ und $S=\varnothing$ und ansonsten gleich~$0$.
\end{theorem}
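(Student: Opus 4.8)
The plan is to deduce the theorem from Proposition~\ref{globalchi}, from Artin--Verdier duality, and from an induction on $\#S$ fed by the excision sequence together with Lemma~\ref{VSchange}. To begin with, $h^0(X\sm S,T)=1$ is immediate (the constant sheaf $\F_p$ on the connected scheme), while the vanishing $H^i_\et(X\sm S,T)=0$ for $i\geq 4$ and the value of $\chi(X\sm S,T)$ are exactly the content of Proposition~\ref{globalchi}. Hence it suffices to prove the formulas for $h^3$ and for $h^1$: the formula for $h^2$ then follows from the identity $h^0-h^1+h^2-h^3=\chi(X\sm S,T)$, and one checks at once that the four displayed right-hand sides are compatible with it.

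\medskip\noindent
\emph{Computation of $h^3$.} By Corollary~\ref{ausschneid2} we have $H^3_\et(X\sm S,T)\cong H^3_\et(X\sm S)$, so the marking may be dropped. Apply Artin--Verdier duality to the arithmetic curve $U=X\sm S$; this is legitimate since $p$ is odd or $k$ is totally imaginary, so the archimedean places carry no mod-$p$ cohomology. It yields $H^3_\et(U,\F_p)^\vee\cong H^0_c(U,\mu_p)=\ker\bigl(\mu_p(k)\to\bigoplus_{\p\in S}\mu_p(k_\p)\bigr)$. This kernel is $\mu_p(k)$, of dimension $\delta$, when $S=\varnothing$, and it vanishes otherwise, because $\mu_p(k)\hookrightarrow\mu_p(k_\p)$ already for a single $\p\in S$. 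Therefore $h^3(X\sm S,T)=\theta$. (In the function field case $U$ is a proper curve for $S=\varnothing$ and an affine curve of cohomological $p$-dimension $\leq 2$ otherwise, and the same conclusion holds.)

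\medskip\noindent
\emph{Computation of $h^1$.} I argue by induction on $\#S$. For $S=\varnothing$, the group $H^1_\et(X,T,\F_p)^\vee$ is, by class field theory, the maximal elementary abelian $p$-quotient of $\pi_1^\et(X,T)^{\ab}$, i.e.\ the Galois group over $k$ of the maximal everywhere-unramified (outside $S_\infty$) $p$-extension in which every prime of $T$ splits completely. By the Artin reciprocity law this group equals $\Cl(k)/\langle[\p]:\p\in T\rangle\otimes\Z_p=\Cl_T(k)\otimes\Z_p$, so that $h^1(X,T)=\dim_{\F_p}\null_p\Cl_T(k)$; substituting the exact sequence of Lemma~\ref{VSchange} for $\dim_{\F_p}V_\varnothing^T(k)$ (which carries Dirichlet's unit theorem, and whose function-field special case covers $T=\varnothing$) turns this into the asserted formula. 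For the inductive step, pass from $S$ to $S\cup\{\p\}$ with $\p\notin S\cup T$ and feed the excision sequence
\[
0\to H^1_\et(X\sm S,T)\to H^1_\et(X\sm(S\cup\{\p\}),T)\to H^2_\p(X_\p)\to H^2_\et(X\sm S,T)\to\cdots
\]
with the local data of Proposition~\ref{localcoh}: $H^i_\p(X_\p)=0$ for $i\leq 1$, and $H^2_\p(X_\p)\cong H^1_{/\nr}(k_\p)$, of dimension $\delta_\p+[k_\p:\Q_p]$. Thus $h^1$ grows by the dimension of the image of $H^1_\et(X\sm(S\cup\{\p\}),T)$ in $H^1_{/\nr}(k_\p)$, namely by $\delta_\p+[k_\p:\Q_p]$ minus the dimension of the cokernel of this localisation map. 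Local Tate duality identifies $H^1_{/\nr}(k_\p)^\vee$ with $U_\p k_\p^{\times p}/k_\p^{\times p}$, and a Poitou--Tate argument identifies that cokernel, dually, with the image of the global Kummer group $V_S^T(k)$ inside $U_\p k_\p^{\times p}/k_\p^{\times p}$. The second exact sequence of Lemma~\ref{VSchange} computes $\dim_{\F_p}V_S^T(k)-\dim_{\F_p}V_{S\cup\{\p\}}^T(k)$ as precisely that image dimension, so the increment of $h^1$ is exactly the increment predicted by the formula; this closes the induction. Finally $h^2$ is read off from $\chi$, and the function field case runs in the same way.

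\medskip\noindent
The hard part is the inductive step, and in it the duality argument identifying the cokernel of the ``ramification at $\p$'' map $H^1_\et(X\sm(S\cup\{\p\}),T)\to H^1_{/\nr}(k_\p)$ with the local image of $V_S^T(k)$: this is the precise place where global arithmetic duality (Poitou--Tate, a Hasse principle for $H^2$) enters, and it has to be made to dovetail with the bookkeeping already built into Lemma~\ref{VSchange}. The remaining points --- the compatibility of the four formulas with $\chi$, the flat-cohomology description of $V_S^T$ at the primes above $p$, and the class field theory input in the base case --- are routine.
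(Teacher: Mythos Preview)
Your argument is correct, and it reaches the same endpoints as the paper's proof, but the organization differs in one substantive way. For $h^3$ you and the paper both use Artin--Verdier duality; you apply it directly to $X\sm S$ and read off $H^0_c(X\sm S,\mu_p)$, while the paper applies it to $X$, invokes Corollary~\ref{ausschneid2} to pass from $(X,T)$ to $X$, and then uses excision over $S$. These are equivalent. The real divergence is in $h^1$: the paper does not compute it at all, but simply quotes \cite{NSW}, Theorem~10.7.10, for the first cohomology of $G_S^T(k)$ and pulls it across the isomorphism $H^1(G_S^T(k))\cong H^1_\et(X\sm S,T)$. You instead rederive that formula by induction on $\#S$, using excision and Lemma~\ref{VSchange} for the bookkeeping, with the crucial identification of the cokernel of $H^1_\et(X\sm(S\cup\{\p\}),T)\to H^1_{/\nr}(k_\p)$ coming from Poitou--Tate. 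That identification is correct, but note that it is exactly the arithmetic content packaged into the proof of \cite{NSW}, 10.7.10; so your route is not mathematically independent of the citation, only more explicit about where the duality input enters. What your approach buys is self-containment within the paper and a transparent link between the jump in $h^1$ and the drop in $\dim V_S^T$; what the paper's approach buys is brevity. Either way $h^2$ then falls out of Proposition~\ref{globalchi}.
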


\begin{proof}
Die Aussage \"{u}ber $h^0$ ist trivial und das Verschwinden der Kohomologie in Dimension gr\"{o}{\ss}er gleich $4$ ist bereits in Satz~\ref{globalchi} enthalten.  Artin-Verdier-Dualit\"{a}t (siehe  \cite{Ma}, 2.4 oder \cite{Mi}, Theorem~3.1) bzw.\ \'{e}tale Poincar\'{e}-Dualit\"{a}t (\cite{Mi1}, V, Corollary~2.3) zeigen
\[
H^3_\et(X)^\vee\cong\Hom_X(\F_p, \G_m)=\mu_p(k).
\]
Desweiteren impliziert das Verschwinden von $H_\et^i(X_\p,T_\p)$ f\"{u}r $i\geq 2$ zusammen mit dem lokalen Dualit\"{a}tssatz (\cite{NSW}, Theorem 7.2.6) Isomorphismen $H^3_\p(X_\p,T_\p)^\vee\cong H^2(k_\p, \F_p)^\vee\cong \mu_p(k_\p)$ f\"{u}r jeden Punkt $\p$ von~$X$. Nach Korollar~\ref{ausschneid2} ist $H^3_\et(X,T) \to H^3_\et(X)\cong \mu_p(k)^\vee$ ein Isomorphismus.
Nun betrachten wir die Ausschneidungsfolge
\[
\bigoplus_{\p\in S} H^3_\p(X) \stackrel{\alpha}{\to} H^3_\et (X,T) \stackrel{\beta}{\to} H^3_\et(X\sm S,T) \to \bigoplus_{\p\in S} H^4_\p(X_\p).
\]
Der rechts stehende Term verschwindet nach Satz~\ref{localcoh}, also ist $\beta$  surjektiv.  Die zu $\alpha$ duale Abbildung ist die nat\"{u}rliche Abbildung
\[
 \mu_p(k) \to \bigoplus_{\p\in S} \mu_p(k_\p).
\]
Diese ist injektiv, es sei denn $\delta=1$ und $S=\varnothing$.
Wir erhalten $h^3(X \sm S,T)=1$ falls $\delta=1$ und $S = \varnothing$, und ansonsten gleich~$0$.
Mit Hilfe des Isomorphismus $H^1(G_S^T(k)) \stackrel{\sim}{\rightarrow} H^1_\et(X\sm S,T)$ erhalten wir die Aussage \"{u}ber  $h^1$  aus der Berechnung der ersten Kohomologie von $G_S^T(k)$, die in \cite{NSW}, Theorem 10.7.10, durchgef\"{u}hrt ist.  Schlie{\ss}lich folgt das Ergebnis f\"{u}r $h^2$ mit Hilfe der Euler-Poincar\'{e}-Charakteristik-Formel aus Satz~\ref{globalchi}.
\end{proof}

Das Verschwinden der Kohomologie in Dimension gr\"{o}{\ss}er $2$ zusammen mit Lemma~\ref{kpi1lem} impliziert das
\begin{corollary}
\label{kpi1crit} F\"{u}r $S\neq \varnothing$ und jede Zwischenerweiterung $K|k$ von $k_S^T(p)|k$ sind die folgenden Bedingungen \"{a}quivalent.

\smallskip
\begin{compactitem}
\item[\rm (i)] $(X\sm S,T)_K$ hat die $K(\pi,1)$-Eigenschaft f\"{u}r $p$.\smallskip
\item[\rm (ii)] Der Homomorphismus $\phi_{2,\F_p}: H^2(G_S^T(K)(p)) \to H^2_\et((X\sm S,T)_K) $ ist surjektiv und es gilt $\cd\, G_S^T(K)(p)\leq 2$.
\end{compactitem}
\end{corollary}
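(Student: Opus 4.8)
The plan is to deduce the equivalence from Theorem~\ref{globcoh} --- specifically the vanishing of étale cohomology in degrees $\geq 3$ when $S\neq\varnothing$ --- combined with the spectral-sequence criterion of Lemma~\ref{kpi1lem}. Fix an intermediate extension $K|k$ of $k_S^T(p)|k$, put $Y=(X\sm S,T)_K$, write $G=G_S^T(K)(p)=\pi_1^\et(Y)(p)$, and let $\widetilde Y=\widetilde{(X\sm S,T)_K}(p)$ be the universal pro-$p$ cover. The preliminary point I would record is that $H^i_\et(Y)=0$ and $H^i_\et(\widetilde Y)=0$ for all $i\geq 3$: every finite subcover of $\widetilde Y\to(X\sm S,T)$ has the form $(X\sm S,T)_L$ for a finite extension $L|k$ with $S(L)\neq\varnothing$, so Theorem~\ref{globcoh} applied over $L$ (where $h^3=\theta=0$ because $S\neq\varnothing$) gives the vanishing at finite level, and one passes to the limit using that étale cohomology commutes with the filtered limits of schemes with affine transition maps defining $Y$ and $\widetilde Y$. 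Recall in addition that $H^1_\et(\widetilde Y)=0$ by construction of $\widetilde Y$, and that by Lemma~\ref{kpi1lem} the maps $\phi_{0,\F_p},\phi_{1,\F_p}$ are always isomorphisms while $\phi_{2,\F_p}$ is always injective.

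For the implication (i)$\Rightarrow$(ii) there is essentially nothing left to do: if $(X\sm S,T)_K$ has the $K(\pi,1)$-property for $p$, then by Lemma~\ref{kpi1lem} every $\phi_{i,\F_p}$ is an isomorphism, so $\phi_{2,\F_p}$ is in particular surjective, and $H^i(G,\F_p)\cong H^i_\et(Y)=0$ for $i\geq 3$, which for the pro-$p$-group $G$ means $\cd G\leq 2$. For (ii)$\Rightarrow$(i) I would read off the Hochschild--Serre spectral sequence $E_2^{ij}=H^i\bigl(G,H^j_\et(\widetilde Y)\bigr)\Rightarrow H^{i+j}_\et(Y)$. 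Since $H^1_\et(\widetilde Y)=0$ we get $E_2^{i,1}=0$ for all $i$, and since $\cd G\leq 2$ we get $E_2^{i,j}=0$ for $i\geq 3$; hence all differentials leaving $E^{0,2}$ vanish and $E_\infty^{0,2}=E_2^{0,2}=H^2_\et(\widetilde Y)^G$. The filtration on $H^2_\et(Y)$ has graded pieces $E_\infty^{2,0}=\im\phi_{2,\F_p}$, $E_\infty^{1,1}=0$ and $E_\infty^{0,2}$, so surjectivity of $\phi_{2,\F_p}$ forces $H^2_\et(\widetilde Y)^G=0$; as $H^2_\et(\widetilde Y)$ is a discrete $p$-torsion module over the pro-$p$-group $G$, vanishing of its invariants forces $H^2_\et(\widetilde Y)=0$. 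Together with $H^1_\et(\widetilde Y)=0$ and the vanishing in degrees $\geq 3$, this is exactly condition (iii) of Lemma~\ref{kpi1lem}, so $(X\sm S,T)_K$ has the $K(\pi,1)$-property for $p$.

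The whole argument is a short diagram chase in low-degree Hochschild--Serre; the only points needing a word of justification are the compatibility of étale cohomology with the limits defining $Y$ and $\widetilde Y$ and the elementary fact that a nonzero discrete $p$-torsion module over a pro-$p$-group has nonzero invariants. I do not expect a genuine obstacle here: the substantive input is the already-established vanishing $H^i_\et(X\sm S,T)=0$ for $i\geq 3$ when $S\neq\varnothing$, and once that is in hand both implications drop out of the spectral sequence.
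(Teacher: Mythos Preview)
Your proposal is correct and follows the same route as the paper: the one-line proof there simply says that the vanishing of \'etale cohomology in degrees $\geq 3$ (from Theorem~\ref{globcoh}, since $S\neq\varnothing$ forces $\theta=0$) together with Lemma~\ref{kpi1lem} yields the claim, and your argument is precisely the unpacking of this. The only minor difference is that you verify (ii)$\Rightarrow$(i) via condition~(iii) of Lemma~\ref{kpi1lem} and an explicit Hochschild--Serre chase, whereas one could equally well go through condition~(ii) of that lemma directly (both sides of $\phi_{i,\F_p}$ vanish for $i\geq 3$, and $\phi_{2,\F_p}$ is already injective); either way the substance is identical.
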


\pagebreak
\section{Dualit\"{a}t f\"{u}r markierte arithmetische Kurven}

Nun leiten wir einen Dualit\"{a}tssatz f\"{u}r markierte arithmetische Kurven ab. Hierzu betrachten wir f\"{u}r eine Garbe $F$ auf $(X\sm S, T)_\et$  die Schafarewitsch-Tate-Grup\-pen

\[
\Sha^i(k,S,T, F)= \ker\big(H^i_\et(X\sm S, T, F) \lang \bigoplus_{\p\in S} H^i(k_\p, F) \big).
\]
Die konstanten Koeffizienten $F=\F_p$ schlie{\ss}en wir von der Notation aus, und genauso verfahren wir mit leerem~$T$. Desweiteren setzen wir
\[
\Be_S^T(k)= V_S^T(k)^\vee.
\]

\begin{theorem}  \label{bstdual} Es sei $k$ ein globaler K\"{o}rper und $p$ eine Primzahl ungleich $\mathrm{char}(k)$. Im Zahlk\"{o}rperfall sei $p\neq 2$ oder $k$ total imagin\"{a}r. Es seien $S$ und $T$ disjunkte endliche Stellenmengen von $k$. Dann gibt es
einen nat\"{u}rlichen Isomorphismus
\[
\Sha^2(k,S,T) \liso \Be_S^T(k).
\]
Insbesondere ist $\Sha^2(k,S,T)$ endlich.
\end{theorem}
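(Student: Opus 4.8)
The statement is a form of Artin--Verdier/Poitou--Tate duality in which the puncture set $S$ and the marking set $T$ interchange their roles while the coefficient sheaf passes from $\F_p$ to $\mu_p$. Indeed, unwinding the definition, $V_S^T(k)=\ker\!\big(H^1_\fl(X\sm(S\cup T),\mu_p)\to\bigoplus_{\p\in S}H^1(k_\p,\mu_p)\big)$ is itself a Shafarevich--Tate-type group for $\mu_p$ on the \emph{fully} open curve $X\sm(S\cup T)$, subject only to local triviality at the places of $S$; the marking of the $T$-points on the $\F_p$-side corresponds to allowing ramification at $T$ on the $\mu_p$-side. The plan is therefore to bridge the marked situation $(X\sm S,T)$ and the open curve $X\sm(S\cup T)$ by the excision sequences of Section~\ref{mark-sec}, and then to invoke the classical global duality over $X\sm(S\cup T)$.

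In detail, I would first write down the excision sequence for the open immersion $X\sm(S\cup T)\hookrightarrow(X\sm S,T)$: its local terms $H^i_\p(X_\p^h,\{\p\})$ with $\p\in T$ vanish for $i\le 1$ and $i\ge 4$ and, by Proposition~\ref{localcoh}, equal $H^1(k_\p)$ in degree $2$ and $H^2(k_\p)\cong\mu_p(k_\p)^\vee$ in degree~$3$. Intersecting this sequence with the kernels of the localization maps at the places of $S$ --- legitimate, since the Gysin maps coming from the points of $T$ are invisible at the places of $S$ --- and suppressing the archimedean places via $p\neq 2$ or $k$ totally imaginary (as in Proposition~\ref{globalchi}), one obtains a short exact sequence
\[
0\to \coker\!\Big(H^1_\et(X\sm(S\cup T))\to\bigoplus_{\p\in T}H^1(k_\p)\Big)\to \Sha^2(k,S,T)\to \Sha^2_{S\cup T}(k,\F_p)\to 0,
\]
where $\Sha^i_{S\cup T}(k,F):=\ker\!\big(H^i_\et(X\sm(S\cup T),F)\to\bigoplus_{\p\in S\cup T}H^i(k_\p,F)\big)$. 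Dually, unwinding $V_S^T(k)$ through the localization maps at $T$ gives an exact sequence relating $V_S^T(k)$, the group $\Sha^1_{S\cup T}(k,\mu_p)$, and the local groups $H^1(k_\p,\mu_p)$, $\p\in T$ (the ``unramified outside $T$'' clause singling out $H^1_\nr(k_\p,\mu_p)$, whence $\delta_\p$; cf.\ Lemma~\ref{VSchange}). Finally I would feed in the classical duality over $X\sm(S\cup T)$: the perfect pairing $\Sha^2_{S\cup T}(k,\F_p)\times\Sha^1_{S\cup T}(k,\mu_p)\to\F_p$, the statement that the images of the global cohomologies of $\F_p$ and of $\mu_p$ in $\bigoplus_{\p\in S\cup T}H^1(k_\p,-)$ are mutual orthogonal complements (the flat-cohomology form of the Poitou--Tate nine-term sequence, \cite{NSW},~8.6.10, valid without $S\cup T\supseteq S_p$; cf.\ \cite{Mi1}), and the perfect local Tate pairings $H^i(k_\p,\F_p)\times H^{2-i}(k_\p,\mu_p)\to\F_p$, which restrict to the duality of $H^j_\nr$ against $H^{2-j}_{/\nr}$. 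Dualizing the first sequence and comparing it term by term with the second via the five lemma then yields the natural isomorphism $\Sha^2(k,S,T)\liso V_S^T(k)^\vee=\Be_S^T(k)$; finiteness of $\Sha^2(k,S,T)$ follows from that of $V_S^T(k)$ (Lemma~\ref{VSchange}).

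The delicate point is the last step: one must check that the connecting and localization homomorphisms in the two exact sequences are \emph{exact transposes} of one another under the local Tate and global Poitou--Tate pairings --- i.e.\ produce an honest commutative ladder, not merely match $\F_p$-dimensions. This is most sensitive at the places above $p$, where $\mu_p$ is not \'etale-locally constant, one works with flat cohomology, and $\delta_\p$ measures exactly the discrepancy; one must also verify that ``taking $\Sha$'' of the excision sequence remains exact, i.e.\ that excision is compatible with the $S$-localizations. A useful consistency check is provided by Theorem~\ref{globcoh} and Proposition~\ref{globalchi}: their dimension formulas have to come out right, and the fact that the term $\theta$ there (nonzero only for $\mu_p\subset k$ and $S=\varnothing$) is not simply $\delta$ shows that the localization map defining $\Sha^2(k,S,T)$ need not be surjective --- so the argument genuinely requires the natural maps and cannot be reduced to an Euler-characteristic count.
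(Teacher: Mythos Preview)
Your approach is viable and the short exact sequence you write down for $\Sha^2(k,S,T)$ is correct, but the route you take is genuinely different from the paper's, and the difference matters.

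The paper does \emph{not} stay over $S\cup T$. Instead it enlarges to a finite set $\Sigma\supset S\cup T\cup S_p$ chosen so that $\Sha^1(k,\Sigma,\mu_p)=0$ (equivalently $\Sha^2(k,\Sigma)=0$). Over this $\Sigma$ the classical Poitou--Tate nine-term sequence applies, and since the Shafarevich--Tate pieces vanish, both $\Be_S^T(k)$ and $\Sha^2(k,S,T)$ are exhibited---the first via the snake lemma applied to the dualized definition of $V_S^T$, the second via excision for $(X\sm S,T)\hookleftarrow X\sm\Sigma$---as the cokernel of one and the same map
\[
H^1_\et(X\sm\Sigma)\longrightarrow \prod_{\p\in S}H^1(k_\p)\times\!\!\!\prod_{\p\in\Sigma\backslash(S\cup T)}\!\!\!H^1_{/\nr}(k_\p).
\]
No five-lemma ladder is needed: the two targets are literally identified.

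What each approach buys: the paper's enlargement to $\Sigma\supset S_p$ allows it to invoke only the classical Poitou--Tate sequence (which in \cite{NSW},~8.6.10 is stated \emph{with} the hypothesis $S_p\subset\Sigma$), and killing $\Sha^1$ collapses the comparison to a single cokernel, so the commutativity you flag as ``delicate'' never arises. Your approach is more intrinsic but pays for it: you need the flat-cohomology form of global duality over $X\sm(S\cup T)$ when $S\cup T\not\supset S_p$, and the citations you give do not cover this---\cite{NSW},~8.6.10 requires $S_p$ in the set, and the reference should be to \cite{Mi} (Arithmetic Duality Theorems), not \cite{Mi1}. The orthogonality computation you outline does go through (one checks $(A'\cap(0\oplus L_T'))^\perp=A+(L_S\oplus 0)$, hence $C^\perp=\mathrm{pr}_T(A)$), but you must still verify that the resulting ladder really commutes, which is the work the paper's trick avoids.
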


\begin{remark}
Durch \"{U}bergang zum Limes \"{u}ber alle endlichen Teilmengen verallgemeinert sich Theorem~\ref{bstdual} direkt auf den Fall einer beliebigen, also nicht notwendig endlichen Stellenmenge $S$.
\end{remark}

\begin{proof}[Beweis von Theorem~\ref{bstdual}]
Wir w\"{a}hlen eine endliche Stellenmenge~$\Sigma$  so gro{\ss}, dass $S\cup T \cup S_p\subset \Sigma$ gilt und $\Pic\big((X\sm \Sigma)_{k(\mu_p)}\big)$ $p$-torsionsfrei ist. Dann verschwinden die Gruppen $\Sha^1(k,\Sigma)$ und $\Sha^1(k, \Sigma, \mu_p)$.
Hieraus folgt das Verschwinden von $\Sha^2(k,\Sigma)\cong \Sha^1(k,\Sigma,\mu_p)^\vee$ (nach Satz~\ref{pinS} und Poitou-Tate-Dualit\"{a}t, \cite{NSW}, Theorem 8.6.7) und die Exaktheit der Folge
\[
0 \lang V_S^T(k) \lang H^1_\et(X\sm \Sigma,\mu_p) \lang \prod_{\p\in S} H^1(k_\p,\mu_p) \times \back \prod_{\p\in \Sigma\backslash (S \cup T)} \back \!\!\!  H^1_{/\nr}(k_\p,\mu_p)\;.
\]
Wir dualisieren diese exakte Folge und betrachten das kommutative Diagramm
\[
\xymatrix@C=.5cm{&\ds\prod_{\p\in S} H^1(k_\p) \times \back \prod_{\p\in \Sigma\backslash (S \cup T)} \back   \!\!\! H^1_{\nr}(k_\p)\ar[r]\ar@{^{(}->}[d]&H^1_\et(X\sm \Sigma,\mu_p)^\vee\ar@{=}[d]\ar@{->>}[r]&\Be_S^T(k)\\
H^1_\et(X\sm \Sigma)\ar@{^{(}->}[r] & \ds\prod_{\p\in \Sigma} H^1(k_\p)\ar@{->>}[r]\ar@{->>}[d]&H^1_\et(X\sm \Sigma,\mu_p)^\vee\\
&\ds\prod_{\p\in T} H^1(k_\p) \times \back \prod_{\p\in \Sigma\backslash (S \cup T)} \back \!\!\!  H^1_{/\nr}(k_\p)\;.
}
\]
Die mittlere Zeile ist ein Teil der langen exakten Folge von Poitou-Tate (siehe \cite{NSW}, Theorem 8.6.10) und daher exakt. Das Schlangenlemma liefert uns die Exaktheit von
\[
H^1_\et(X\sm \Sigma) \lang \prod_{\p\in S} H^1(k_\p) \times \back \prod_{\p\in \Sigma\backslash (S \cup T)} \back   \!\!\! H^1_{/\nr}(k_\p) \lang \Be_S^T(k) \lang 0\,. \leqno \mathrm{(I)}
\]
Unter Beachtung des kommutativen Diagramms
\[
\xymatrix{H^2_\et(X\sm S, T)\ar[d]\ar[r]&H^2_\et(X\sm \Sigma)\ar@{^{(}->}[d]\\
\ds\prod_{\p\in S} H^2(k_\p)\ar@{^{(}->}[r] & \ds\prod_{\p\in \Sigma} H^2(k_\p),}
\]
erhalten wir
\[
\Sha^2(k,S,T)=\ker\big( H^2_\et(X\sm S, T)\lang H^2_\et(X\sm \Sigma)\big).
\]
Ausschneidung liefert uns somit die exakte Folge
\[
H^1_\et(X\sm \Sigma) \lang \prod_{\p\in S} H^1(k_\p) \times \back \prod_{\p\in \Sigma\backslash (S \cup T)} \back   \!\!\! H^1_{/\nr}(k_\p) \lang \Sha^2(k,S,T) \lang 0\, , \leqno \mathrm{(II)}
\]
und ein Vergleich von (I) und (II) zeigt die Behauptung des Theorems.
\end{proof}

\begin{corollary} \label{h2lokali} Es sei $k$ ein globaler K\"{o}rper und $p$ eine Primzahl ungleich $\mathrm{char}(k)$. Im Zahlk\"{o}rperfall sei $p\neq 2$ oder $k$ total imagin\"{a}r.
Es seien $S$ und $T$  disjunkte endliche Stellenmengen von $k$ mit $V_S^T(k)=0$.
Dann ist die nat\"{u}rliche Abbildung
\[
H_\et^2(X\sm S, T) \lang \prod_{\p\in S} H^2(k_\p)
\]
injektiv und ein Isomorphismus, falls $\delta=0$. Im Fall $\delta=1$ ist f\"{u}r jedes $\p_0 \in S$ die Abbildung
\[
H_\et^2(X\sm S, T) \lang \prod_{\p\in S \backslash \{\p_0\}} H^2(k_\p)
\]
ein Isomorphismus.
\end{corollary}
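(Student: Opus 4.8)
The plan is to reduce the statement to the duality theorem, the Euler--Poincar\'{e} formula, and the global reciprocity law contained in the Poitou--Tate sequence. The map $\mathrm{loc}_S\colon H^2_\et(X\sm S,T)\to\prod_{\p\in S}H^2(k_\p)$ is injective with no work: its kernel is by definition $\Sha^2(k,S,T)$, which by Theorem~\ref{bstdual} is isomorphic to $\Be_S^T(k)=V_S^T(k)^\vee=0$. This already gives the first assertion. For the isomorphism statements I would compare dimensions. Local duality (\cite{NSW}, 7.2.6) gives $H^2(k_\p)\cong\mu_p(k_\p)^\vee$, hence $h^2(k_\p)=\delta_\p$, while Theorem~\ref{globcoh} together with $V_S^T(k)=0$ yields $h^2(X\sm S,T)=\sum_{\p\in S}\delta_\p-\delta+\theta$.

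If $\delta=0$ then $\theta=0$, so $h^2(X\sm S,T)=\sum_{\p\in S}\delta_\p=\dim_{\F_p}\prod_{\p\in S}H^2(k_\p)$, and an injective homomorphism between $\F_p$-vector spaces of equal finite dimension is an isomorphism. If $\delta=1$ then $\mu_p\subset k\subset k_\p$ for every $\p$, so $\delta_\p=1$ throughout; for $S=\varnothing$ one gets $\theta=1$ and $h^2(X,T)=0$ and the statement is trivial, so assume $S\neq\varnothing$. Then $\theta=0$ and $h^2(X\sm S,T)=\#S-1=\dim_{\F_p}\prod_{\p\in S\sm\{\p_0\}}H^2(k_\p)$, so it suffices to show that $H^2_\et(X\sm S,T)\to\prod_{\p\in S\sm\{\p_0\}}H^2(k_\p)$ is injective.

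For this I would argue by reciprocity. Let $x\in H^2_\et(X\sm S,T)$ be locally trivial at every $\p\in S\sm\{\p_0\}$. Choose a finite set $\Sigma\supset S\cup T\cup S_p$ and restrict $x$ along $X\sm\Sigma\hookrightarrow X\sm S$ to a class $\bar x\in H^2_\et(X\sm\Sigma)=H^2(G_{\Sigma\cup S_\infty}(k))$ (Satz~\ref{pinS}). For $\p\notin S$ the localization $H^2_\et(X\sm S,T)\to H^2(k_\p)$ factors through $H^2_\et\big((X\sm S)_\p^h,T_\p^h\big)$, which vanishes by the local computation in the proof of Satz~\ref{localcoh}; hence $\bar x$ is locally trivial at all primes of $\Sigma\sm\{\p_0\}$. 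The tail $H^2(G_{\Sigma\cup S_\infty}(k))\to\bigoplus_{\p\in\Sigma}H^2(k_\p)\to\mu_p(k)^\vee\to0$ of the Poitou--Tate sequence (\cite{NSW}, 8.6.10) then forces $\mathrm{inv}_{\p_0}(\mathrm{loc}_{\p_0}\bar x)=0$ in $\mu_p(k)^\vee$, and since $\mathrm{inv}_{\p_0}\colon H^2(k_{\p_0})\liso\F_p$ is an isomorphism ($\delta_{\p_0}=1$) we get $\mathrm{loc}_{\p_0}x=0$. Thus $\mathrm{loc}_S(x)=0$, and injectivity of $\mathrm{loc}_S$ gives $x=0$.

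The hard part is to make precise that the cokernel map $\bigoplus_{\p\in\Sigma}H^2(k_\p)\to\mu_p(k)^\vee$ of the Poitou--Tate sequence restricts, on each single factor $H^2(k_{\p_0})$, to the invariant isomorphism --- i.e.\ that the product formula for global Brauer classes is exactly what kills the $\p_0$-component, so that a class locally trivial outside $\p_0$ is in fact trivial at $\p_0$ as well. This, together with the (routine) compatibility of localization with the restriction $X\sm\Sigma\hookrightarrow X\sm S$ and the vanishing of $H^2$ of a henselian trait, are the only arithmetic inputs; everything else is bookkeeping with the formulas of Theorems~\ref{globcoh} and~\ref{bstdual}.
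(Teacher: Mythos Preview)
Your proof is correct, but the paper takes a shorter route. Instead of enlarging $S$ to $\Sigma$ and invoking the Poitou--Tate sequence, the paper reads everything off the single excision sequence
\[
0 \to H^2_\et(X\sm S,T) \to \prod_{\p\in S} H^2(k_\p) \stackrel{\alpha}{\to} H^3_\et(X,T),
\]
where the left zero comes from $\Sha^2(k,S,T)=0$ just as you have it. The point is that the dual of $\alpha$ is the diagonal $\mu_p(k)\to\prod_{\p\in S}\mu_p(k_\p)$: when $\delta=0$ the source is zero, so $\alpha=0$ and the first map is an isomorphism; when $\delta=1$ the single component $\mu_p(k)\to\mu_p(k_{\p_0})$ is already an isomorphism, hence $\alpha|_{H^2(k_{\p_0})}$ is an isomorphism onto $H^3_\et(X,T)$, and an elementary linear-algebra argument shows that the projection $\ker\alpha\to\prod_{\p\neq\p_0}H^2(k_\p)$ is bijective. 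No dimension count and no auxiliary set $\Sigma$ are needed.

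Your approach and the paper's use the same arithmetic content---the reciprocity relation on local invariants---but you invoke it externally via the tail of the Poitou--Tate sequence for an enlarged $\Sigma$, whereas the paper has it built into the excision map $\alpha$ and its dual. Your version makes the role of reciprocity explicit (and your ``hard part'' paragraph correctly identifies it); the paper's version avoids the detour through $\Sigma$ and the compatibility checks that come with it.
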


\begin{proof} F\"{u}r $\p \in S$ gilt
\[
H^3_\p(X,T) \cong H^2(k_\p) \cong \mu_p(k_\p)^\vee.
\]
Nach Theorem~\ref{bstdual} gilt $\Sha^2(k,S,T)=0$. Daher liefert
Ausschneidung die exakte Folge
\[
0 \to H_\et^2(X\sm S, T) \to \prod_{\p\in S} H^2(k_\p) \stackrel{\alpha}\to H_\et^3(X,T).
\]
Die zu $\alpha$ duale Abbildung ist $\mu_p(k) \to \prod_{\p\in S} \mu_p(k_\p)$, woraus die Behauptung des Korollars folgt.
\end{proof}

Schlie{\ss}lich kann man $V_S^T(k)$ in folgendem Sinne zum Verschwinden bringen.

\begin{proposition}\label{vskill}
Es seien $T$ und $\M$ disjunkte Stellenmengen, wobei $T$ endlich sei und $\M$ die Dirichletdichte $\delta(\M)=0$ habe. Dann gibt es eine endliche und zu $T\cup \M$ disjunkte Stellenmenge $S$, bestehend aus Stellen $\p$ mit $N(\p)\equiv 1 \bmod p$, so dass
\[
V_{S}^T(k)=0.
\]
\end{proposition}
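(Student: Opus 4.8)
The plan is to kill the Kummer group $V_S^T(k)$ one obstructing class at a time by adjoining suitable split primes. Recall from Lemma~\ref{VSchange} that for any finite disjoint $S,T$ one has the exact sequence
\[
0 \longrightarrow V_{S\cup\{\p\}}^T(k) \stackrel{\phi}{\longrightarrow} V_S^T(k) \longrightarrow U_\p k_\p^{\times p}/k_\p^{\times p},
\]
so that adjoining a prime $\p\notin S\cup T$ to $S$ replaces $V_S^T(k)$ by the kernel of the evaluation map $V_S^T(k)\to k_\p^\times/k_\p^{\times p}$, $a\mapsto a \bmod k_\p^{\times p}$ (using that for $a\in V_S^T(k)$ and $\p\notin T$ the class of $a$ already lies in $U_\p k_\p^{\times p}/k_\p^{\times p}$, so the map lands in that subgroup, and its kernel is exactly $V_{S\cup\{\p\}}^T(k)$). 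Since $V_\varnothing^T(k)$ — and hence every $V_S^T(k)$ — is finite of some dimension $d=\dim_{\F_p}V_\varnothing^T(k)$, it suffices to show: \emph{given $0\neq V_S^T(k)$, there is a prime $\p$, outside $S\cup T\cup\M$, with $N(\p)\equiv 1\bmod p$, such that some fixed nonzero $a\in V_S^T(k)$ does not become a $p$-th power in $k_\p^\times$}; then $\dim_{\F_p}V_{S\cup\{\p\}}^T(k)<\dim_{\F_p}V_S^T(k)$, and after at most $d$ steps the group vanishes. The resulting $S$ is then finite, disjoint from $T\cup\M$, and consists of primes with residue norm $\equiv 1\bmod p$.

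First I would reduce to a Chebotarev statement. Fix $0\neq a\in V_S^T(k)$; so $a\in k^\times$ is not a $p$-th power (indeed it is not locally a $p$-th power at the primes of $T$ — or at least it is nontrivial in $k^\times/k^{\times p}$, which is what matters). Consider the field $L=k(\mu_p,\sqrt[p]{a})$. One checks that $k(\mu_p)\subsetneq L$: because $a\notin k^{\times p}$ and, by Kummer theory over $k(\mu_p)$, $a$ remains a non-$p$-th-power in $k(\mu_p)^\times$ (the extension $k(\mu_p)|k$ has degree prime to $p$, so $k^\times/k^{\times p}\hookrightarrow k(\mu_p)^\times/k(\mu_p)^{\times p}$). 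Hence $L|k(\mu_p)$ is cyclic of degree $p$. Now I want a prime $\p$ of $k$ that splits completely in $k(\mu_p)|k$ — this is exactly the condition $N(\p)\equiv 1\bmod p$, equivalently $\mu_p\subset k_\p$ — but does \emph{not} split completely in $L|k(\mu_p)$, i.e. whose Frobenius in $\Gal(L|k(\mu_p))$ is nontrivial. For such a $\p$, $k_\p$ contains $\mu_p$ but $a$ is not a $p$-th power in $k_\p$ (a local field containing $\mu_p$ splits $L$ locally iff $a\in k_\p^{\times p}$). Apply Chebotarev's density theorem to $L|k$: the set of primes with the prescribed Artin symbol (any element of $\Gal(L|k)$ restricting to $1$ on $k(\mu_p)$ and to a generator of $\Gal(L|k(\mu_p))$) has positive Dirichlet density $1/[L:k]$, hence is infinite and in particular meets the complement of the density-zero set $\M$ as well as the finite set $S\cup T$. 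Pick $\p$ in this set.

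The main technical point — and the step I would be most careful about — is the identification: for a finite place $\p$ with $\mu_p\subset k_\p$ and $a\in k^\times$, $a$ becomes a $p$-th power in $k_\p^\times$ if and only if $\p$ splits completely in $k(\mu_p,\sqrt[p]{a})|k(\mu_p)$. This is the standard local–global compatibility of Kummer theory (a completion $k_\p$ of $k(\mu_p)$ at a place over $\p$ splits the degree-$p$ Kummer extension iff the Kummer generator is a local $p$-th power), and it is precisely what makes the Chebotarev argument give exactly the condition needed to shrink $V_S^T(k)$ via Lemma~\ref{VSchange}. Everything else is bookkeeping: the constraint $N(\p)\equiv 1\bmod p$ is built into "splits in $k(\mu_p)$", finiteness of $S$ follows from the induction on $\dim_{\F_p} V_\varnothing^T(k)$, and disjointness from $T\cup\M$ is arranged at each step because the Chebotarev set is infinite of positive density while $T$ is finite and $\M$ has density zero. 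One small subtlety to handle at each stage: after adjoining $\p$ to $S$, the group $V^T_{S\cup\{\p\}}(k)$ genuinely drops in dimension provided $a\notin k_\p^{\times p}$, which is exactly what the choice of $\p$ guarantees; so the process terminates.
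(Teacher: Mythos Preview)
Your proof is correct and rests on the same idea as the paper's, but the paper packages it more concisely. Instead of inducting one class at a time, the paper observes that the set $\varOmega$ of all primes with $N(\p)\equiv 1\bmod p$ has the property that its pullback to $k(\mu_p)$ has Dirichlet density~$1$, and then invokes a Hasse principle (\cite{NSW}, Theorem~9.1.9\,(ii)) to conclude that
\[
k^\times/k^{\times p}\;\longrightarrow\;\prod_{\p\in\varOmega\setminus(T\cup\M)} k_\p^\times/k_\p^{\times p}
\]
is injective; since $V_\varnothing^T(k)\subset k^\times/k^{\times p}$ is finite-dimensional, a finite subset $S\subset\varOmega\setminus(T\cup\M)$ already makes the restricted map injective, i.e.\ $V_S^T(k)=0$. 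Your step-by-step Chebotarev argument is essentially a hands-on proof of the special case of that Hasse principle needed here: it is slightly more self-contained, while the paper's formulation makes transparent that no careful inductive choice is required and that any large enough $S$ inside $\varOmega\setminus(T\cup\M)$ will do.
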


\begin{proof}  Es sei $\varOmega$ die Menge aller Stellen $\p$ von $k$ mit $N(\p)\equiv 1 \bmod p$. Dann hat die Menge $\varOmega(k(\mu_p))$ der Fortsetzungen von Stellen aus $\varOmega$ nach $k(\mu_p)$ die Dirichletdichte~$1$.  Wegen $k^\times/k^{\times p}\cong H^1(k,\mu_p)$ und nach dem Hasseprinzip \cite{NSW}, Theorem 9.1.9\,(ii), ist der Homomorphismus
\[
k^\times/k^{\times p} \lang \prod_{\p\in \varOmega\backslash (T\cup\M)} k_\p^\times/k_\p^{\times p}
\]
injektiv. Da der $\F_p$-Untervektorraum $V_\varnothing^T(k)\subset k^\times/k^{\times p}$ endlichdimensional ist, finden wir eine endliche Teilmenge $S\subset \varOmega\sm (T\cup\M)$, so dass
\[
V_{S}^T(k)=\ker\big( V_\varnothing^T(k) \lang \prod_{\p\in S} k_\p^\times/k_\p^{\times p} \big)=0.
\]
\end{proof}

\section{Lokale Komponenten}

Wie zuvor sei $k$ ein globaler K\"{o}rper und  $p$ eine von $\text{char}(k)$ verschiedene Primzahl. Im Zahlk\"{o}rperfall nehmen wir an, dass $p\neq 2$ oder $k$ total imagin\"{a}r ist. Es sei  $T$ eine endliche, im Funktionenk\"{o}rperfall nichtleere, Stellenmenge mit $\null_p\Cl_T(k)=0$. Da $\Cl_T(k)$ endlich ist, gilt dann auch $\Cl_T(k)(p)=0$ und die exakte Folge
\[
0\lang E_{k,T} \lang k^\times \mapr{(v_\q)_\q}\bigoplus_{\q\notin T} \Z \lang \Cl_{T}(k) \lang 0
\]
impliziert die Exaktheit von
\[
0\lang E_{k,T}/p \lang k^\times/k^{\times p}\lang \bigoplus_{\q\notin T} \Z/p\Z \lang 0.
\]

\begin{definition}
F\"{u}r eine Stelle $\p\notin T$ bezeichne $s_\p\in k^\times/k^{\times p}$ ein Element mit $v_\p(s_\p)\equiv 1\bmod p$ und $v_\q(s_\p) \equiv 0 \bmod p$ f\"{u}r alle $\q \notin T \cup \{\p\}$. Das Element $s_\p$ ist wohldefiniert bis auf Multiplikation mit Elementen aus $E_{k,T}/p$.
\end{definition}

Wir bezeichnen  mit $k(\sqrt[p]{E_{k,T}})$ die endliche Galoiserweiterung von $k$, die durch Adjunktion der $p$-ten Wurzeln aus allen $T$-Einheiten von $k$ entsteht. Falls $\delta=0$ (d.h.\ $\mu_p\not \subset k$), so bedeutet dies,  dass auch die $p$-ten Einheitswurzeln adjungiert werden. F\"{u}r eine Stelle $\p \notin T$ ist die Erweiterung
$k(\sqrt[p]{E_{k,T}},\sqrt[p]{s_\p})$ unabh\"{a}ngig von der Auswahl von $s_\p$.

\begin{lemma}  \label{dual}
Es sei  $T$ eine endliche, im Funktionenk\"{o}rperfall nichtleere,  Menge von Stellen mit $\null_p \Cl_T(k)=0$. Desweiteren sei $\q\notin T\cup S_p$ ein in der Erweiterung $k(\sqrt[p]{E_{k,T}})|k$ voll zerlegtes Primideal. Dann ist  $k_{\{\q\}}^{T,\el}|k$ zyklisch von der Ordnung $p$ und $\q$ verzweigt in dieser Erweiterung. Im Fall $\delta=1$ gilt \"{u}berdies
\[
k(\sqrt[p]{E_{k,T}}) k_{\{\q\}}^{T,\el} = k(\sqrt[p]{E_{k,T}},\sqrt[p]{s_\q}).
\]
Eine Stelle $\p\notin T\cup \{\q\}$ der Norm $N(\p)\equiv 1 \bmod p$ zerf\"{a}llt genau dann in $k_{\{\q\}}^{T,\el}|k$, wenn $\q$ in der Erweiterung $k(\sqrt[p]{E_{k,T}},\sqrt[p]{s_\p})|k(\sqrt[p]{E_{k,T}})$ zerf\"{a}llt.
\end{lemma}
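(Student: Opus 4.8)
The starting point is the exact sequence
$0\to E_{k,T}/p \to k^\times/k^{\times p}\to \bigoplus_{\q\notin T}\Z/p\Z\to 0$
established just before the Definition, together with Kummer theory. Dualizing (or directly using the identification $H^1(k,\mu_p)=k^\times/k^{\times p}$ and the description of $V_{\{\q\}}^T(k)$ from Lemma~\ref{VSchange}), one sees that $G(k_{\{\q\}}^{T,\el}|k)$ is dual to $V_{\{\q\}}^T(k)$, and by the exact sequence in Lemma~\ref{VSchange} for $S=\varnothing\to S=\{\q\}$ one has
$0\to V_{\{\q\}}^T(k)\to V_\varnothing^T(k)\to U_\q k_\q^{\times p}/k_\q^{\times p}$,
where the last term has dimension $\delta_\q$. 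Since $\q$ is split in $k(\sqrt[p]{E_{k,T}})|k$, every $T$-unit is a local $p$-th power at $\q$, so $E_{k,T}/p$ maps to zero in $k_\q^\times/k_\q^{\times p}$; since $\null_p\Cl_T(k)=0$ gives $V_\varnothing^T(k)=E_{k,T}/p$ (the class-group term vanishes), the whole of $V_\varnothing^T(k)$ dies in $U_\q k_\q^{\times p}/k_\q^{\times p}$ \emph{except} possibly for the contribution of a uniformizer. First I would compute that $k^\times/k^{\times p}\to k_\q^\times/k_\q^{\times p}$, restricted to the subgroup generated by $E_{k,T}/p$ and $s_\q$, has image exactly the uniformizer line: $E_{k,T}/p$ goes to $0$, and $s_\q$ has $v_\q(s_\q)\equiv1$, hence is a uniformizer, so its image is the nontrivial ``ramified'' class. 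This shows $V_{\{\q\}}^T(k)$ has codimension one in $V_\varnothing^T(k)\oplus\langle s_\q\rangle$ — more precisely $\dim V_{\{\q\}}^T(k)=1$ — hence $k_{\{\q\}}^{T,\el}|k$ is cyclic of order $p$, and the generator is (the class of) $s_\q$; since $v_\q(s_\q)\not\equiv0\bmod p$, the extension $k(\sqrt[p]{s_\q})|k$ is ramified at $\q$, proving the first assertion and, in the case $\delta=1$, the displayed equality $k(\sqrt[p]{E_{k,T}})\,k_{\{\q\}}^{T,\el}=k(\sqrt[p]{E_{k,T}},\sqrt[p]{s_\q})$.

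For the splitting criterion, in the case $\delta=1$ it is a direct computation: a prime $\p\notin T\cup\{\q\}$ with $N(\p)\equiv1\bmod p$ splits in $k_{\{\q\}}^{T,\el}=k(\sqrt[p]{s_\q})$ iff $s_\q$ is a $p$-th power in $k_\p^\times$, i.e.\ iff $\p$ splits completely in $k(\mu_p,\sqrt[p]{s_\q})|k(\mu_p)$; and by the symmetry of the Hilbert symbol / the power-residue symbol $\big(\tfrac{s_\q}{\p}\big)=\big(\tfrac{s_\p}{\q}\big)$ over $k(\mu_p)$ (using $v_\p(s_\p)\equiv1$, $v_\q(s_\q)\equiv1$, and that both $s_\p,s_\q$ are units outside $T\cup\{\p,\q\}$, the finitely many other symbols being controlled by the fact that $\q$ is already split in $k(\sqrt[p]{E_{k,T}})$), this is equivalent to $\q$ splitting in $k(\mu_p,\sqrt[p]{s_\p})|k(\mu_p)=k(\sqrt[p]{E_{k,T}},\sqrt[p]{s_\p})$. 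For general $\delta$ one reduces to $k(\mu_p)$: splitting of $\p$ in the degree-$p$ extension $k_{\{\q\}}^{T,\el}|k$ can be tested after the prime-to-$p$ base change to $k(\mu_p)=k(\sqrt[p]{E_{k,T}})\cap k(\mu_p)$, where the previous case applies, and the condition descends because everything in sight is Galois over $k$.

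The main obstacle is the careful bookkeeping of the power-residue (Hilbert) symbols at the ``extra'' places — those in $T$, above $p$, and the archimedean ones — when one invokes the reciprocity law $\prod_v (s_\p,s_\q)_v=1$ over $k(\mu_p)$ to flip $\big(\tfrac{s_\q}{\p}\big)$ into $\big(\tfrac{s_\p}{\q}\big)$. One must check that all these auxiliary symbols are trivial: at places outside $T\cup\{\p,\q\}$ both arguments are units (so the tame symbol vanishes, as these places are prime to $p$ after possibly enlarging the ground field — here the hypothesis $\q\notin S_p$ and $N(\p)\equiv1\bmod p$ enters), and at places in $T\cup S_p\cup S_\infty$ one uses that $\q$ (resp.\ the relevant data) is split in $k(\sqrt[p]{E_{k,T}})$, which forces $s_\q$ to be a local $p$-th power there. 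Once this vanishing is in place the symmetry is immediate and the lemma follows; I would organize the write-up so that this symbol computation is isolated as the one genuinely technical step, the rest being formal manipulation of the exact sequences already available.
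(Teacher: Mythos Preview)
There are two genuine gaps.

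\textbf{The identification of $V_{\{\q\}}^T(k)$ is wrong.} You assert that $G(k_{\{\q\}}^{T,\el}|k)$ is dual to $V_{\{\q\}}^T(k)$; it is not. By definition, $V_{\{\q\}}^T(k)$ consists of classes that are local $p$-th powers at~$\q$ and have valuation $\equiv 0$ outside~$T$: in Kummer terms (when $\delta=1$) these give extensions \emph{split} at~$\q$ and unramified outside $T\cup S_p$ --- the roles of $S$ and $T$ are exactly opposite to what you need. In fact the hypothesis that $\q$ splits in $k(\sqrt[p]{E_{k,T}})$ gives $V_{\{\q\}}^T(k)=V_\varnothing^T(k)\cong E_{k,T}/p$, of dimension $\#T+r-1+\delta$, not~$1$. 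The paper proceeds precisely by making this observation and then inserting $\dim V_{\{\q\}}^T(k)$ into the formula for $h^1(X\sm\{\q\},T)$ from Satz~\ref{globcoh} to obtain $h^1=1$; ramification of~$\q$ follows because $\Cl_T(k)(p)=0$. For the displayed equality ($\delta=1$) one writes $k_{\{\q\}}^{T,\el}=k(\sqrt[p]{\alpha})$ and checks $\alpha/s_\q\in E_{k,T}/p$ by comparing valuations. Note in particular that $k_{\{\q\}}^{T,\el}\neq k(\sqrt[p]{s_\q})$ in general --- only the compositum with $k(\sqrt[p]{E_{k,T}})$ equals $k(\sqrt[p]{E_{k,T}},\sqrt[p]{s_\q})$ --- so your phrase ``the generator is $s_\q$'' is already off.

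\textbf{The reciprocity argument is incomplete.} For the splitting criterion you propose the product formula $\prod_v(s_\p,s_\q)_v=1$, but the justification at $v\in S_p$ is a non sequitur: that $\q$ splits in $k(\sqrt[p]{E_{k,T}})$ says something about $E_{k,T}$ at $\q$, not about $s_\q$ at primes above~$p$, and indeed $s_\q$ need not be a local $p$-th power there. What actually controls the $S_p$-contribution is that the true Kummer generator $\alpha=s_\q\cdot e$ (with a specific $e\in E_{k,T}/p$) gives an extension \emph{unramified} at $S_p$, hence $(\alpha,u)_v=0$ for any unit $u$ there; working with $s_\q$ alone loses this. The paper avoids symbols entirely: by class field theory, $\p$ splits in $k_{\{\q\}}^{T,\el}|k$ iff there exists $s\in k^\times/k^{\times p}$ with $v_{\mathfrak l}(s)\equiv 0$ for ${\mathfrak l}\notin T\cup\{\p,\q\}$, $v_\p(s)\equiv 1$, and $s\in k_\q^{\times p}$. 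Any such $s$ satisfies $s/s_\p\in E_{k,T}/p\subset k_\q^{\times p}$ (the last inclusion is exactly the splitting of $\q$ in $k(\sqrt[p]{E_{k,T}})$), whence $s_\p\in k_\q^{\times p}$; conversely $s=s_\p$ works. This argument is uniform in~$\delta$ and needs no bookkeeping at~$S_p$.
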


\begin{proof}
Nach Voraussetzung gilt $N(\q)\equiv 1 \bmod p$. Wegen $\null_p \Cl_T(k)=0$ liefert Lemma~\ref{VSchange} einen Isomorphismus $E_{k,T}/p\liso V_\varnothing^T(k)$. Da $\q$ vollst\"{a}ndig in $k(\sqrt[p]{E_{k,T}})|k$ zerf\"{a}llt, ist der Homomorphismus $E_{k,T}/p \to k_{\q}^{\times}/k_{\q}^{\times p}$ die Nullabbildung und wir erhalten $V_{\{\q\}}^T(k) \liso V_\varnothing^T(k)$. Insbesondere gilt
\[
\dim_{\F_p} V_{\{\q\}}^T(k)= \dim_{\F_p} E_{k,T}/p= \# T + r - 1 + \delta,
\]
was, nach Satz~\ref{globcoh}, $h^1(X\sm \{\q\}, T)=1$  impliziert. Wegen $\Cl_T(k)(p)=0$ muss $\q$ in $k_{\{\q\}}^{T,\el}$ verzweigen. Es sei nun $\delta=1$ und $k_{\{\q\}}^{T,\el}=k(\sqrt[p]{\alpha})$ mit $\alpha\in k^\times/k^{\times p}$. Nach \"{U}bergang zu $\alpha^e$ f\"{u}r ein geeignetes $e \in \F_p^\times$ gilt
\[
v_{\q}(\alpha) \equiv 1 \bmod p,\ v_\p(\alpha)\equiv 0 \bmod p \text{ f\"{u}r } \p\neq \q.
\]
Daher liegt $\alpha/s_{\q}$ in $E_{k,T}/p$, was nach Kummertheorie die Gleichheit $k(\sqrt[p]{E_{k,T}}) k_{\{\q\}}^{T,\el} \allowbreak = k(\sqrt[p]{E_{k,T}},\sqrt[p]{s_\q})$ zeigt.

Nun sei $\p\notin T\cup \{\q\}$ ein Primideal der Norm $N(\p)\equiv 1 \bmod p$. Nach Klassenk\"{o}rpertheorie zerf\"{a}llt $\p$ genau dann in der  Erweiterung $k_{\{\q\}}^{T,\el}|k$, wenn ein $s\in k^\times/k^{\times p}$ existiert, so dass gilt:

\smallskip
\begin{compactitem}
\item $v_{\mathfrak l}(s) \equiv 0 \bmod p$ f\"{u}r ${\mathfrak l} \notin T \cup \{\p,\q\}$,
\item $v_\p(s) \equiv 1 \bmod p$,
\item $s\in k_\q^{\times p}$.
\end{compactitem}

\smallskip\noindent
Existiert ein solches $s$, so gilt $s/s_\p \in E_{k,T}/p$, also $s_\p \in k_\q^{\times p}$, d.h.\ $\q$ zerf\"{a}llt in $k(\sqrt[p]{E_{k,T}},\sqrt[p]{s_\p})|k(\sqrt[p]{E_{k,T}})$. Zerf\"{a}llt umgekehrt $\q$ in dieser Erweiterung, so erf\"{u}llt $s=s_\p$ die gew\"{u}nschte Eigenschaft und $\p$ zerf\"{a}llt in $k_{\{\q\}}^{T,\el}|k$.
\end{proof}

F\"{u}r ein Element $x\in H_\et^2(X\sm S, T)$ und $\p\in S$ bezeichnen wir das Bild von $x$ unter der nat\"{u}rlichen Abbildung $H_\et^2(X\sm S, T) \to H^2(k_\p)$ mit $x_\p$ und sprechen von der $\p$-Komponente von $x$. Ist $S'\subset S$ eine Teilmenge, so fassen wir vermittels der nat\"{u}rlichen Inklusion Elemente aus $H_\et^1(X\sm S',T)$ auch als Elemente in $H_\et^1(X\sm S, T)$ auf. F\"{u}r $\p\in S$ bezeichne $\T_\p \subset G(k_S^{T,\el}|k)$ die Tr\"{a}gheitsgruppe von~$\p$. Wir nennen ein Element $\chi\in H_\et^1 (X\sm S, T)$ unverzweigt bei $\p$, wenn $\chi(\T_\p)=0$ gilt, und ansonsten verzweigt.

\begin{proposition} \label{komponenten} Es sei  $T$ eine endliche, im Funktionenk\"{o}rperfall nichtleere, Menge von Stellen mit $\null_p \Cl_T(k)=0$. Es sei $\q\notin T\cup S_p$ ein in der Erweiterung $k(\sqrt[p]{E_{k,T}})|k$ voll zerlegtes Primideal und $\chi_\q$ ein Erzeuger der nach Lemma~\ref{dual} zyklischen Gruppe $H_\et^1(X\sm \{ \q \}, T)$.
Desweiteren sei $S$ eine zu $T\cup \{\q\}$ disjunkte, endliche Menge von Stellen $\p$ mit $N(\p)\equiv 1 \bmod p$. Ist $\chi \in H_\et^1(X\sm S,T)$ ein beliebiges Element, so gelten f\"{u}r die lokalen Komponenten von $\chi\cup \chi_\q \in H_\et^2(X\sm (S\cup \{\q\}), T)$ die folgenden Aussagen:
\[
(\chi \cup \chi_\q)_\q = \left\{
\begin{array}{cl}
0 & \text{ falls } \chi(\Frob_\q)=0,\\
\neq 0 & \text{ sonst.}
\end{array}
\right.
\]
F\"{u}r $\p\in S$ gilt
\[
(\chi \cup \chi_\q)_\p = \left\{
\begin{array}{cl}
0 & \text{falls } \chi \text{ unverzweigt bei $\p$ oder $\q$ voll zerlegt} \\
& \text{in } k(\sqrt[p]{E_{k,T}},\sqrt[p]{s_\p})|k(\sqrt[p]{E_{k,T}}) \text{ ist}, \\
\neq 0 & \text{sonst.}
\end{array}
\right.
\]
\end{proposition}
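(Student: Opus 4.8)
The plan is to reduce the two displayed formulas to the non-degenerate local cup-product pairing at $\q$ and at the primes of $S$, and then to read off the ramification behaviour of $\chi$ and $\chi_\q$ from Lemma~\ref{dual}.

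First I would assemble the local input. Since $\q$ is completely split in $k(\sqrt[p]{E_{k,T}})|k$, which contains $k(\mu_p)$, and since $N(\p)\equiv 1\bmod p$ for every $\p\in S$, we have $\mu_p\subset k_\p$ for all $\p\in S\cup\{\q\}$. A choice of primitive $p$-th root of unity identifies the Galois module $\F_p$ with its Cartier dual $\mu_p$, so the local duality theorem (\cite{NSW}, Theorem~7.2.6) gives $H^2(k_\p,\F_p)\cong\F_p$ and turns
\[
H^1(k_\p,\F_p)\times H^1(k_\p,\F_p)\stackrel{\cup}{\lang} H^2(k_\p,\F_p)\cong\F_p
\]
into a non-degenerate pairing of the two-dimensional $\F_p$-vector space $H^1(k_\p,\F_p)$. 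A cup product of two unramified classes factors through $H^2(\widehat{\Z},\F_p)=0$, so the one-dimensional subspace $H^1_{\nr}(k_\p)$ of unramified classes is isotropic, hence maximal isotropic; therefore $(\F_p\alpha)^\perp=H^1_{\nr}(k_\p)$ for every $\alpha\in H^1_{\nr}(k_\p)\sm\{0\}$. In particular, for $\alpha\in H^1_{\nr}(k_\p)$ and arbitrary $\beta\in H^1(k_\p,\F_p)$ one has $\alpha\cup\beta=0$ precisely when $\alpha=0$ or $\beta\in H^1_{\nr}(k_\p)$, while $\alpha\cup\beta\neq 0$ whenever $\alpha\neq 0$ and $\beta\notin H^1_{\nr}(k_\p)$. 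Since cup products commute with pullback and $H^1_\et(X\sm S',T)=H^1(G_{S'\cup S_\infty}^T(k),\F_p)$ for every finite set $S'$, the $\p$-component of $\chi\cup\chi_\q$ equals the local cup product $\chi|_\p\cup\chi_\q|_\p$ of the restrictions to a decomposition group at $\p$, for every $\p\in S\cup\{\q\}$.

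Next I would identify the ramification types of these restrictions using Lemma~\ref{dual}. The class $\chi_\q$ lies in $H^1_\et(X\sm\{\q\},T)$, hence is unramified at every prime $\neq\q$ and cuts out the extension $k_{\{\q\}}^{T,\el}|k$, which is cyclic of degree $p$ and ramified at $\q$. Thus $\chi_\q|_\q$ is a non-zero ramified class, whereas $\chi|_\q$ is unramified (as $\q\notin S$) and vanishes exactly when $\chi(\Frob_\q)=0$; plugging this into the first step yields $(\chi\cup\chi_\q)_\q=0$ iff $\chi(\Frob_\q)=0$, and $\neq 0$ otherwise. For $\p\in S$ the class $\chi_\q|_\p$ is unramified, and being the restriction of the character cutting out $k_{\{\q\}}^{T,\el}$ it vanishes precisely when $\p$ splits completely in $k_{\{\q\}}^{T,\el}|k$, which by the last assertion of Lemma~\ref{dual} is equivalent to $\q$ splitting completely in $k(\sqrt[p]{E_{k,T}},\sqrt[p]{s_\p})|k(\sqrt[p]{E_{k,T}})$; moreover $\chi|_\p\in H^1_{\nr}(k_\p)$ if and only if $\chi(\T_\p)=0$, i.e.\ iff $\chi$ is unramified at $\p$. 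Substituting into the criterion of the first step gives $(\chi\cup\chi_\q)_\p=0$ iff $\chi_\q|_\p=0$ (that is, $\q$ splits in the displayed Kummer extension) or $\chi|_\p\in H^1_{\nr}(k_\p)$ (that is, $\chi$ is unramified at $\p$), and $\neq 0$ in the remaining case, where one factor is a non-zero unramified class and the other is not unramified.

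The conceptual core of the argument is the non-degeneracy of the local pairing with $H^1_{\nr}(k_\p)$ as a maximal isotropic subspace --- i.e.\ local Tate duality together with $\cd\widehat{\Z}=1$ --- fed with the global ramification data supplied by Lemma~\ref{dual}. I expect the only real obstacle to be bookkeeping: keeping the identifications $\F_p\leftrightarrow\mu_p$, curve cohomology $\leftrightarrow$ Galois cohomology, and ``scheme-theoretic $\p$-component'' $\leftrightarrow$ ``restriction to a decomposition group'' mutually consistent, and matching the distinguished-prime ramification descriptions of $\chi$ and $\chi_\q$ correctly with the statements of Lemma~\ref{dual}.
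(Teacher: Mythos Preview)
Your proof is correct and follows essentially the same route as the paper: both arguments reduce to the non-degenerate local cup-product pairing at the primes in $S\cup\{\q\}$, observe that $H^1_{\nr}(k_\p)$ is its own orthogonal complement, and then invoke Lemma~\ref{dual} to translate the (non-)vanishing of the unramified local restrictions into the stated splitting conditions. Your write-up is somewhat more explicit about why the unramified subspace is maximal isotropic and why $\mu_p\subset k_\p$ for the relevant primes, but the substance is identical.
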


\begin{proof}
F\"{u}r eine Stelle $\p$ mit $N(\p)\equiv 1 \bmod p$ ist $H^1(k_\p)$ zweidimensional, $H^2(k_\p)$ eindimensional und die Paarung $H^1(k_\p) \times H^1(k_\p) \stackrel{\cup}{\to} H^2(k_\p)$ vollkommen. Desweiteren gilt $\chi_1\cup \chi_2=0$ f\"{u}r $\chi_1,\chi_2 \in H^1_\nr(k_\p)$.

Nun ist $\chi_\q$ verzweigt bei $\q$ und $\chi$ unverzweigt bei $\q$. Daher ist $(\chi \cup \chi_\q)_\q$ genau dann von  Null verschieden, wenn das Bild von $\chi$ in $H^1_\nr(k_\q)$ nichttrivial ist, also wenn $\chi(\Frob_\q)\neq 0$.

Nun sei $\p\in S$. Es ist $\chi_\q$ unverzweigt bei $\p$ und daher $(\chi\cup\chi_\q)_\p$ genau dann von Null verschieden, wenn das Bild von $\chi_\q$ in $H^1_\nr(k_\p)$ ungleich Null und $\chi$ bei $\p$ verzweigt ist. Die erstere Bedingung ist \"{a}quivalent zu $\chi_\q(\Frob_\p)\neq 0$, und nach Lemma~\ref{dual} dazu, dass $\q$ tr\"{a}ge in der Erweiterung $k(\sqrt[p]{E_{k,T}},\sqrt[p]{s_\p})|k(\sqrt[p]{E_{k,T}})$ ist.
\end{proof}

\section{Eine Hilfserweiterung}

\begin{theorem} \label{hilf} Es seien $T$ und  $\M$  disjunkte Stellenmengen des globalen K\"{o}rpers~$k$, wobei $T$ endlich sei und $\M$ die Dirichletdichte $\delta(\M)=0$ habe.
Es gelte $p\neq 2$ und $p\neq \hbox{char}(k)$. Dann gibt es eine endliche Stellenmenge  $T_0$, sowie eine endliche, nichtleere Stellenmenge $S$, bestehend aus Stellen $\p$ mit $N(\p)\equiv 1\bmod p$, so dass die folgenden Aussagen gelten.

\smallskip
\begin{compactitem}
\item[\rm (i)]  $S \cap (T\cup T_0 \cup \M)=\varnothing$.\smallskip
\item[\rm (ii)] $(X\sm S, T \cup T_0)$ hat die $K(\pi,1)$-Eigenschaft f\"{u}r $p$.\smallskip
\item[\rm (iii)] Jedes $\p\in S$ verzweigt in $k_{S}^{T \cup T_0}(p)$.\smallskip
\item[\rm (iv)] $V_S^{T\cup T_0}(k)=0$. \smallskip
\item[\rm (v)] Das Cup-Produkt $H^1(G_S^{T\cup T_0}(k)(p)) \otimes H^1(G_S^{T\cup T_0}(k)(p)) \to H^2(G_S^{T\cup T_0}(k)(p))$ ist surjektiv.
\end{compactitem}
\end{theorem}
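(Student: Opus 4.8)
Der Plan ist der folgende.

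\emph{Erster Schritt: Reduktion auf verschwindende Klassengruppentorsion.} Indem man $T_0$ gen\"ugend gro{\ss} (und im Funktionenk\"orperfall nichtleer) w\"ahlt, erreicht man, dass $T':=T\cup T_0$ im Funktionenk\"orperfall nichtleer ist und $\null_p\Cl_{T'}(k)=0$ gilt; dies ist m\"oglich, da $\Cl_{T'}(k)$ f\"ur nichtleeres $T'$ --- und im Zahlk\"orperfall auch $\Cl(k)$ --- endlich ist. Damit stehen die Elemente $s_\p\in k^\times/k^{\times p}$, die endliche Erweiterung $k(\sqrt[p]{E_{k,T'}})|k$ (welche f\"ur $\delta=0$ die $p$-ten Einheitswurzeln enth\"alt) sowie Lemma~\ref{VSchange}, Lemma~\ref{dual} und Satz~\ref{komponenten} zur Verf\"ugung.

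\emph{Zweiter Schritt: Konstruktion von $S$.} Hier liegt der Kern der Arbeit. Man baut $S$ schrittweise auf, indem man mit dem Tschebotareffschen Dichtigkeitssatz nacheinander Stellen $\p$ mit $N(\p)\equiv 1\bmod p$ hinzunimmt und dabei ihr Zerlegungsverhalten in den einschl\"agigen endlichen Galoiserweiterungen von~$k$ --- vor allem in $k(\sqrt[p]{E_{k,T'}})$ und in den K\"orpern $k(\sqrt[p]{E_{k,T'}},\sqrt[p]{s_\p})$ --- vorschreibt; da $\M$ die Dichte~$0$ hat und $S_p$ sowie $T'$ endlich sind, bleibt stets gen\"ugend Spielraum, um $T'\cup\M$ (und automatisch $S_p$) zu meiden. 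W\"ahrend der Konstruktion verfolgt man gleichzeitig das Verzweigungsverhalten (mit Hilfe von Lemma~\ref{dual}), die Dimension von $V_S^{T'}(k)$ (mit Hilfe von Lemma~\ref{VSchange} und des Hasseprinzips wie im Beweis von Satz~\ref{vskill}) und die lokalen Komponenten der Cup-Produkte $\chi\cup\chi_\q$ (mit Hilfe von Satz~\ref{komponenten}). Das Ziel ist, ein $S$ zu erhalten, f\"ur das gilt: \emph{(a)} $V_S^{T'}(k)=0$; \emph{(b)} jede Stelle aus~$S$ verzweigt in $k_S^{T'}(p)$; \emph{(c)} das \'etale Cup-Produkt $H^1_\et(X\sm S,T')\otimes H^1_\et(X\sm S,T')\to H^2_\et(X\sm S,T')$ ist surjektiv; \emph{(d)} $G_S^{T'}(k)(p)$ ist eine milde Pro-$p$-Gruppe im Sinne von Labute.

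\emph{Dritter Schritt: Folgerung.} Nach (a) identifiziert Korollar~\ref{h2lokali} den Raum $H^2_\et(X\sm S,T')$ mit $\bigoplus_{\p} H^2(k_\p)$ (summiert \"uber $S$, falls $\delta=0$, bzw.\ \"uber $S\sm\{\p_0\}$, falls $\delta=1$), wobei jeder Summand wegen $N(\p)\equiv 1\bmod p$ eindimensional ist; mit Satz~\ref{komponenten} richtet man die Frobeniusbedingungen so ein, dass passende Cup-Produkte $\chi\cup\chi_\q$ diese Summanden erzeugen --- das ist (c). Da $\phi_{1,\F_p}$ stets ein Isomorphismus ist und das Cup-Produkt mit den Kantenhomomorphismen vertr\"aglich ist, folgt aus (c) die Surjektivit\"at von $\phi_{2,\F_p}$; aus (d) folgt mit dem Satz von Labute \cite{La} \"uber milde Pro-$p$-Gruppen, dass $\cd G_S^{T'}(k)(p)=2$. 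Korollar~\ref{kpi1crit} liefert dann die $K(\pi,1)$-Eigenschaft von $(X\sm S,T')$, also (ii); insbesondere ist damit $\phi_{2,\F_p}$ sogar ein Isomorphismus, so dass das Cup-Produkt von $G_S^{T'}(k)(p)$ mit dem \'etalen Cup-Produkt aus (c) identifiziert wird, was (v) zeigt. Schlie{\ss}lich sind (i), (iii) und (iv) gerade die Disjunktheitsbedingung, (b) und (a), und mit $T_0$ aus dem ersten Schritt ist alles bewiesen.

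Die Hauptschwierigkeit erwarte ich im zweiten Schritt, n\"amlich in der \emph{gleichzeitigen} Erf\"ullung von (a)--(d): Die hinzuzunehmenden Stellen m\"ussen einerseits so gew\"ahlt werden, dass die durch Satz~\ref{komponenten} beschriebenen lokalen Verkettungsdaten der Stellen aus~$S$ ein \emph{stark freies} System von Relationen f\"ur $G_S^{T'}(k)(p)$ liefern --- hier geht Labutes kombinatorisches Kriterium f\"ur Mildheit ein, zusammen mit der in Kapitel~IV von \cite{NSW} entwickelten Technik der freien Produkte von B\"undeln von Pro-$p$-Gruppen \"uber einer topologischen Basis ---, andererseits aber so, dass $V_S^{T'}(k)$ verschwindet und \emph{jede} Stelle aus~$S$ tats\"achlich verzweigt. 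Die Spannung zwischen diesen Forderungen --- etwa zwischen der von Lemma~\ref{dual} f\"ur die verzweigten Erzeuger verlangten vollen Zerlegung in $k(\sqrt[p]{E_{k,T'}})$ einerseits und den zum Abt\"oten von $V_S^{T'}(k)$ n\"otigen, dort \emph{nicht} voll zerlegten Stellen andererseits --- macht eine sorgf\"altig gestaffelte, induktive Wahl von~$S$ erforderlich.
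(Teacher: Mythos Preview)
Your outline is essentially the paper's approach, but two points deserve correction. First, the free-product-of-bundles technique from \cite{NSW}, Kapitel~IV, plays no role in the proof of Theorem~\ref{hilf}; it enters only later, in Satz~\ref{erweiterung} and Satz~\ref{enlarge}. Second, and more substantively, the paper does not verify mildness via some general ``kombinatorisches Kriterium'' of Labute but via the specific $U\oplus V$-criterion of Theorem~\ref{mildkrit}: one needs a direct-sum decomposition $H^1(G)=U\oplus V$ with $V\otimes V\stackrel{\cup}{\to} H^2(G)$ trivial and $U\otimes V\stackrel{\cup}{\to} H^2(G)$ surjective. This dictates the shape of the construction of~$S$: one first chooses a set $S_0$ with $V_{S_0\sm\{\p\}}^{T'}(k)=0$ for every $\p\in S_0$ (a double application of Satz~\ref{vskill}), so that $U:=H^1_\et(X\sm S_0,T')$ is already in hand; then one adjoins further primes $\q_1,\dots,\q_m$, each completely split in $k(\sqrt[p]{E_{k,T'}})$, whose associated characters $\eta_a$ span the complement~$V$. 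The Chebotarev conditions imposed on the $\q_a$ (and the inductive disjointness lemmas~\ref{ewnichtda} resp.\ \ref{einheitswda}) are tailored precisely so that Satz~\ref{komponenten} forces $(\eta_a\cup\eta_b)_\p=0$ everywhere while making a suitable $2m\times 2m$ matrix of local components of $\chi_a\cup\eta_a$, $\psi_a\cup\eta_a$ triangular with nonzero diagonal. Your goals (c) and (d) are thus not separate targets but both consequences of this single decomposition.

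Finally, the paper splits into the cases $\delta=0$ and $\delta=1$, which require genuinely different preparatory work: for $\delta=1$ one must enlarge $T_0$ further so that $V_{T\cup T_0}^\varnothing(k)=0$ and $S_p\subset T\cup T_0$, and the linear-disjointness statement needed for Chebotarev is the more delicate Lemma~\ref{einheitswda} rather than Lemma~\ref{ewnichtda}. Your first step only arranges $\null_p\Cl_{T'}(k)=0$, which suffices for $\delta=0$ but not for $\delta=1$.
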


Im Beweis von Theorem~\ref{hilf} werden  wir das folgende hinreichende Kriterium f\"{u}r kohomologische Dimension~$2$ benutzen.

\begin{theorem}[\cite{kpi1}, Theorem 5.5] \label{mildkrit}
Es sei $p$ eine ungerade Primzahl und $G$ eine endlich pr\"{a}sentierbare Pro-$p$-Gruppe.
Angenommen es gilt $H^2(G)\neq 0$ und es gibt eine direkte Summenzerlegung $H^1(G)\cong U \oplus V$, so dass gilt:
\begin{itemize}
\item[\rm(i)] das Cup-Produkt $V\otimes V \stackrel{\cup}{\lang} H^2(G)$ ist trivial, d.h.\ $v_1\cup v_2=0$ f\"{u}r alle $v_1,v_2\in V$,
\item[\rm (ii)] das Cup-Produkt $U\otimes V \stackrel{\cup}{\lang} H^2(G)$ ist surjektiv.
\end{itemize}
Dann  gilt $\cd\, G=2$.
\end{theorem}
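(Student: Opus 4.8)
Der Plan ist zu zeigen, dass $G$ unter den gegebenen Voraussetzungen eine \emph{milde} Pro-$p$-Gruppe im Sinne von Labute \cite{La} ist; f\"{u}r solche Gruppen gilt $\cd\, G=2$. Zun\"{a}chst w\"{a}hle ich eine minimale Pr\"{a}sentation $1\to R\to \Phi\to G\to 1$ mit einer freien Pro-$p$-Gruppe $\Phi$ auf $d=\dim_{\F_p}H^1(G)$ Erzeugern und einem minimalen Erzeugendensystem $\rho_1,\dots,\rho_r$ von $R$ als abgeschlossenem Normalteiler, wobei $r=\dim_{\F_p}H^2(G)$. Die Zerlegung $H^1(G)=U\oplus V$ dualisiere ich zu einer Zerlegung $\Phi^{\ab}/p=U^\vee\oplus V^\vee$ und w\"{a}hle die Erzeuger von $\Phi$ so, dass $m=\dim U$ von ihnen auf eine Basis von $U^\vee$ und $n=\dim V$ von ihnen auf eine Basis von $V^\vee$ abgebildet werden.

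Als N\"{a}chstes \"{u}bersetze ich die Voraussetzungen (i) und (ii) in Aussagen \"{u}ber die Anfangsformen der Relationen. Verm\"{o}ge der Magnus-Einbettung $\Phi\hookrightarrow \F_p\langle\langle X_1,\dots,X_d\rangle\rangle$ besitzt jede Relation eine Anfangsform in der graduierten Lie-Algebra $\gr\Phi$, und die bekannte Dualit\"{a}t identifiziert das Cup-Produkt $\wedge^2 H^1(G)\to H^2(G)$ mit der Abbildung $H^2(G)^\vee\to \gr_2\Phi\cong\wedge^2(\Phi^{\ab}/p)$, die jeder Relation ihren quadratischen Anteil zuordnet. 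Aus (ii) folgt, dass das Cup-Produkt surjektiv, die duale Abbildung also injektiv ist; somit haben \emph{alle} Relationen den Anfangsgrad genau $2$, und ihre Anfangsformen $\bar\rho_1,\dots,\bar\rho_r\in\gr_2\Phi$ sind linear unabh\"{a}ngig. Voraussetzung (i) besagt, dass diese Anfangsformen keinen Anteil in $\wedge^2 V^\vee$ besitzen, w\"{a}hrend (ii) genauer liefert, dass ihre Projektionen in den gemischten Teil $U^\vee\otimes V^\vee$ linear unabh\"{a}ngig sind.

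Der Kern des Beweises ist der Nachweis der \emph{starken Freiheit} der $\bar\rho_i$ nach Anick, aufgefasst als Elemente der freien assoziativen Algebra $A=\F_p\langle X_1,\dots,X_d\rangle$ verm\"{o}ge $[a,b]=ab-ba$. Hierzu f\"{u}hre ich eine Monomordnung ein, in der jede $v$-Variable das Gewicht $N\geq 2$ und jede $u$-Variable das Gewicht $1$ erh\"{a}lt und Monome gleichen Gewichts lexikographisch mit $u>v$ verglichen werden. Da nach (i) keine reinen $vv$-Terme auftreten, dominieren in jedem $\bar\rho_i$ die gemischten Terme vom Gewicht $1+N$ \"{u}ber die $uu$-Terme vom Gewicht $2$, und unter den gemischten Monomen ist stets eines der Form $u_jv_k$ (erst ein $u$, dann ein $v$) f\"{u}hrend. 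Wegen der linearen Unabh\"{a}ngigkeit der gemischten Anteile kann ich die $\bar\rho_i$ durch Zeilenoperationen in Stufenform bringen, so dass ihre Leitmonome $u_{j_i}v_{k_i}$ paarweise verschieden sind. Diese Leitmonome haben als ersten Buchstaben stets ein $u$ und als zweiten stets ein $v$; zwei solche Monome der L\"{a}nge $2$ k\"{o}nnen daher nicht \"{u}berlappen. Ein paarweise verschiedenes, \"{u}berlappungsfreies System von Monomen der L\"{a}nge $2$ ist kombinatorisch frei, so dass nach Anicks Kriterium die $\bar\rho_i$ stark frei sind.

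Die Hauptschwierigkeit erwarte ich in diesem letzten Schritt, n\"{a}mlich im korrekten Zusammenspiel von Monomordnung, Leittermen und Anicks kombinatorischem Freiheitskriterium; die Voraussetzungen (i) und (ii) sind gerade so zugeschnitten, dass dieses Zusammenspiel aufgeht. Ist die starke Freiheit einmal etabliert, so folgt nach Labute \cite{La}, dass $G$ mild ist und insbesondere $\cd\, G\leq 2$ gilt. Wegen $H^2(G)\neq 0$ ist andererseits $\cd\, G\geq 2$, und wir erhalten $\cd\, G=2$.
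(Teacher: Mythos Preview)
Your proposal is correct and follows precisely the approach the paper indicates: the paper does not give its own proof of this theorem but cites it from \cite{kpi1}, Theorem~5.5, and remarks that the argument there establishes \emph{mildness} of $G$ in the sense of Labute, from which $\cd\,G=2$ follows by \cite{La}, Theorem~1.2(c). Your outline---dualising the cup-product conditions into statements about initial forms of relations in $\gr_2\Phi$, then using a suitable monomial ordering so that the leading terms are pairwise distinct non-overlapping monomials of the shape $u_jv_k$, and invoking Anick's combinatorial-freeness criterion to conclude strong freeness---is exactly the argument carried out in \cite{kpi1}, \S5.
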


\begin{remark} 1. F\"{u}r Pro-$p$-Gruppen mit einer definierenden Relation war dieses Resultat bereits lange bekannt, siehe \cite{La2}.

\noindent
2. Das Kriterium in Theorem~\ref{mildkrit} liefert mehr, als nur kohomologische Dimension~$2$: die gegebene Bedingung ist hinreichend f\"{u}r die {\em Milde} von $G$. Dies wurde in \cite{kpi1}, \S 5, aus Labutes Resultaten \cite{La} abgeleitet.  Nach \cite{La}, Theorem 1.2(c), haben milde Pro-$p$-Gruppen die kohomologische Dimension~$2$.
\end{remark}

Im Verlaufe dieses Abschnitts werden wir Theorem~\ref{hilf} beweisen. Wir beginnen mit dem (einfacheren) Fall $\delta=0$, also $k$ enth\"{a}lt keine primitive $p$-te Einheitswurzel.

\begin{lemma}\label{ewnichtda}
Es sei $\delta=0$ und $S=\{\p_1,\ldots,\p_n\}$ eine endliche Menge von Stellen $\p$ mit $N(\p)\equiv 1 \bmod p$. Wir setzen $s_i=s_{\p_i}$. Dann sind die Erweiterungen
\[
k(\mu_p,\sqrt[p]{s_1},\ldots, \sqrt[p]{s_n}), \ k(\sqrt[p]{E_{k,T}}) \text{ und } k_S^{T,\el}(\mu_p)
\]
linear disjunkt \"{u}ber $k(\mu_p)$.
\end{lemma}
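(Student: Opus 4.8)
Write $M_1=k(\mu_p,\sqrt[p]{s_1},\dots,\sqrt[p]{s_n})$, $M_2=k(\sqrt[p]{E_{k,T}})$, $M_3=k_S^{T,\el}(\mu_p)$ and $F=k(\mu_p)$. The plan is to reduce the statement to linear algebra over $\F_p$ after passing to $F$, using that $[F:k]\mid p-1$ is prime to $p$. All three fields contain $F$ (for $M_2$ because, with $\delta=0$, the defining construction also adjoins $\mu_p$) and are abelian of exponent $p$ over $F$: for $M_1,M_2$ this is Kummer theory, and for $M_3$ it follows from the fact that $k_S^{T,\el}|k$ has $p$-power degree while $[F:k]\mid p-1$, so $k_S^{T,\el}\cap F=k$ and $\Gal(M_3|k)\cong\Gal(k_S^{T,\el}|k)\times\Gal(F|k)$; in particular $\Gal(M_3|F)\cong\Gal(k_S^{T,\el}|k)$ is elementary abelian. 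Hence linear disjointness over $F$ is equivalent to the multiplicativity of degrees $[M_1M_2M_3:F]=[M_1:F][M_2:F][M_3:F]$, which I would establish in the two steps ``$M_1,M_2$ linearly disjoint over $F$'' and ``$M_1M_2,M_3$ linearly disjoint over $F$''.

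The workhorse is the injection $k^\times/k^{\times p}\hookrightarrow F^\times/F^{\times p}$: if $a\in k^\times$ becomes a $p$-th power in $F$, then $k(\sqrt[p]{a})\subseteq F$ has degree dividing $\gcd(p,[F:k])=1$, so $a\in k^{\times p}$ — this is where $\delta=0$ enters. By Kummer theory over $F$ one has $M_1M_2=F(\sqrt[p]{B})$, where $B\subseteq F^\times/F^{\times p}$ is the image of the subgroup $B_0:=\langle s_1,\dots,s_n,E_{k,T}\rangle\subseteq k^\times$, and the injection gives $\dim_{\F_p}B=\dim_{\F_p}B_0k^{\times p}/k^{\times p}$. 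For the first step I would show that $\bar s_1,\dots,\bar s_n$ and $\overline{E_{k,T}}$ span a direct sum inside $k^\times/k^{\times p}$: given a relation $\prod_i s_i^{c_i}\equiv\varepsilon\bmod k^{\times p}$ with $\varepsilon\in E_{k,T}$, applying the valuation $v_{\p_j}$ (legitimate since $\p_j\notin T$, so $v_{\p_j}(\varepsilon)=0$, while $v_{\p_j}(s_i)$ is $\equiv1$ for $i=j$ and $\equiv0$ otherwise) forces $c_j\equiv0$ for every $j$ and hence $\varepsilon\in k^{\times p}$. Transporting this along the injection, it reads $[M_1M_2:F]=[M_1:F][M_2:F]$.

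For the second step I would argue by contradiction. If $M_1M_2\cap M_3\neq F$, then, $M_1M_2|F$ being elementary abelian, the intersection contains a subextension of degree $p$ over $F$, which by Kummer theory has the form $F(\sqrt[p]{a})$ with $a\in B_0\subseteq k^\times$ and $\bar a\neq0$ in $F^\times/F^{\times p}$, so $a\notin k^{\times p}$ by the injection above. The Galois closure of $k(\sqrt[p]{a})|k$ is $k(\mu_p,\sqrt[p]{a})=F(\sqrt[p]{a})$ (using $p\neq\mathrm{char}(k)$), and $\Gal(F(\sqrt[p]{a})|k)$ is \emph{not} abelian: if it were, then $k(\sqrt[p]{a})|k$ would itself be Galois, hence normal of degree $p$, hence would contain $\mu_p$, forcing $[k(\mu_p):k]\mid p$ and therefore $\delta=1$. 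On the other hand $F(\sqrt[p]{a})\subseteq M_3=k_S^{T,\el}\cdot k(\mu_p)$, and $\Gal(M_3|k)\cong\Gal(k_S^{T,\el}|k)\times\Gal(k(\mu_p)|k)$ is abelian, so every Galois subextension of $M_3|k$ — in particular $F(\sqrt[p]{a})|k$ — must be abelian, a contradiction. Hence $M_1M_2\cap M_3=F$, and combining the two steps yields $[M_1M_2M_3:F]=[M_1M_2:F][M_3:F]=[M_1:F][M_2:F][M_3:F]$, i.e.\ the linear disjointness.

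The main obstacle is precisely this last step: one has to exclude a ``hidden'' overlap between the explicit radical field $M_1M_2$ and the arithmetically defined $k_S^{T,\el}(\mu_p)$, and no ramification information about the $\p_i$ is available to separate them directly. The point that makes it work is that every line $\langle\bar a\rangle$ in the Kummer group of $M_1M_2|F$ is represented by an $a\in k^\times\smallsetminus k^{\times p}$, so that its lying inside $M_3$ would make $k(\sqrt[p]{a})|k$ an abelian degree-$p$ extension — which $\delta=0$ forbids. Everything else is routine bookkeeping with coprime-degree Galois groups and the identification $k^\times/k^{\times p}\hookrightarrow k(\mu_p)^\times/k(\mu_p)^{\times p}$.
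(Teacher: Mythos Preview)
Your proof is correct. The first step (linear disjointness of $M_1$ and $M_2$) is handled the same way as in the paper: the $s_i$ are independent modulo $E_{k,T}k^{\times p}$ by construction, and you make explicit the injection $k^\times/k^{\times p}\hookrightarrow F^\times/F^{\times p}$ that the paper uses tacitly.

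For the second step the paper takes a slightly different, more structural route. It observes that $\Gal(F|k)$ acts on $\Gal(M_1M_2|F)$ via the cyclotomic character $\chi_{\mathrm{cycl}}$ (because the Kummer radicals come from $k^\times$), whereas it acts trivially on $\Gal(M_3|F)\cong\Gal(k_S^{T,\el}|k)$; since $\delta=0$ means $\chi_{\mathrm{cycl}}\neq 1$, the $1$-eigenspace and the $0$-eigenspace for this action meet only in zero, and disjointness follows in one line. Your abelian/nonabelian dichotomy is precisely the concrete manifestation of this eigenspace split: ``$\Gal(F|k)$ acts trivially on $\Gal(L|F)$'' is the same as ``$L|k$ is abelian'', and ``acts via $\chi_{\mathrm{cycl}}\neq 1$'' is what makes $F(\sqrt[p]{a})|k$ nonabelian. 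So the two arguments are equivalent in content; the paper's formulation is terser and scales immediately to more factors, while yours is more self-contained and does not presuppose familiarity with the eigenspace viewpoint.
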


\begin{proof}
Nach Konstruktion ist der durch $s_1,\ldots, s_n$ in $k^\times/k^{\times p} E_{k,T}$ aufgespannte $\F_p$-Vektorraum $n$-dimensional. Nach Kummertheorie sind somit die Erweiterungen $k(\mu_p,\sqrt[p]{s_1},\ldots, \sqrt[p]{s_n})|k(\mu_p)$ und $k(\sqrt[p]{E_{k,T}})|k(\mu_p)$ linear disjunkt. Da beide im $1$-Eigenraum bez\"{u}glich des zyklotomischen Charakters $\chi_{\text{cycl}}: G(k(\mu_p)|k)\to \F_p^\times$ liegen, $k_S^{T,\el}(\mu_p)|k(\mu_p)$ jedoch im $0$-Eigenraum, folgt das Ergebnis.
\end{proof}

\begin{proof}[Beweis von Theorem~\ref{hilf} im Fall $\delta=0$]
Wir w\"{a}hlen $T_0$ so, dass $\null_p \Cl_{T\cup T_0}(k)=0$ gilt und $T\cup T_0$ nichtleer ist, falls $k$ ein Funktionenk\"{o}rper ist. Zwecks Vereinfachung der Notation ersetzen wir im folgenden $T$ durch $T\cup T_0$. Nun w\"{a}hlen wir  eine endliche und zu $T\cup \M$ disjunkte Stellenmenge $S_0$, bestehend aus Stellen $\p$  mit $N(\p)\equiv 1 \bmod p$, so dass
\[
V_{S_0\backslash \{\p\}}^T(k)=0 \text{ f\"{u}r jedes } \p \in S_0.
\]
Dies erreicht man durch zweimalige Anwendung von Satz~\ref{vskill}.   Wir numerieren die Elemente in $S_0$, also  $S_0=\{\p_1,\ldots,\p_m\}$, und setzen $s_i=s_{\p_i}$.  Aus Satz~\ref{globcoh} folgt, dass die Tr\"{a}gheitsgruppen $\T_i$ der Stellen $\p_i$, $i=1,\ldots, m$, nichttrivial und von der Ordnung $p$ sind. Wir erweitern nun $S_0$ um weitere $m$ Primideale in der folgenden Weise:

Wir w\"{a}hlen Fortsetzungen $\P_1,\ldots,\P_m$ von $\p_1,\ldots,\p_m$ nach $k(\mu_p)$ und betrachten f\"{u}r ein Primideal $\mQ$ in $k(\mu_p)$ und  $a\in \{1,\ldots, m\}$ die folgende Bedingung $(B_a)$:

\medskip\noindent
\begin{compactitem}
\item $\mQ \notin T(k(\mu_p)) \cup \M(k(\mu_p))$.\smallskip
\item $\Frob_{\mQ} \notin \T_{\P_a} \subset G(k_{S_0}^{T,\el}(\mu_p)|k(\mu_p))$.\smallskip
\item F\"{u}r alle $b\neq a$ zerf\"{a}llt $\mQ$ in $k(\mu_p,\sqrt[p]{s_b})|k(\mu_p)$.\smallskip
\item $\mQ$ ist tr\"{a}ge in $k(\mu_p,\sqrt[p]{s_a})|k(\mu_p)$.\smallskip
\item $\mQ$ zerf\"{a}llt vollst\"{a}ndig in $k(\sqrt[p]{E_{k,T}})|k(\mu_p)$.\smallskip
\end{compactitem}

\medskip\noindent
Da das vollst\"{a}ndige Zerfallen von $\mQ$ in $k(\sqrt[p]{E_{k,T}})$ zu den Bedingungen geh\"{o}rt, ist $(B_a)$ unabh\"{a}ngig von der Auswahl  der $s_i$. Nach Lemma~\ref{ewnichtda} finden wir mit Hilfe des Tschebotarjowschen  Dichtigkeitssatzes ein  Primideal $\mQ_1$ in $k(\mu_p)$, das die Bedingung $(B_1)$ erf\"{u}llt.   Dann setzen wir $\q_1=\mQ_1 \cap k$.  Nach Lemma~\ref{dual} ist die Erweiterung $k_{\{\q_1\}}^{T,\el}$ zyklisch von der Ordnung $p$ und bei $\q_1$ verzweigt. Nun w\"{a}hlen wir nacheinander Stellen $\mQ_2,\ldots,\mQ_m$ in $k(\mu_p)$ und setzen jeweils $\q_a=\mQ_a\cap k$,  so dass gilt

\medskip
\begin{compactitem}
\item $\mQ_a$ erf\"{u}llt Bedingung $(B_a)$, und\smallskip
\item $\mQ_a$ ist f\"{u}r $b<a$  zerlegt in $k_{\{\q_b\}}^{T,\el}(\mu_p)|k(\mu_p)$ und in $k(\mu_p,\sqrt[p]{s_{\q_b}})|k(\mu_p)$.
\end{compactitem}

\medskip\noindent
Dies ist m\"{o}glich, weil nach der Wahl von $\mQ_1,\ldots,\mQ_{a-1}$ und nach Lemma~\ref{ewnichtda} die Erweiterungen
\[
k(\mu_p,\sqrt[p]{s_1},\ldots, \sqrt[p]{s_m}, \sqrt[p]{s_{\q_1}},\ldots, \sqrt[p]{s_{\q_{a-1}}} ), \ k(\sqrt[p]{E_{k,T}}) \text{ und } k_{S_0\cup \{ \q_1,\ldots, \q_{a-1}\}}^{T,\el}(\mu_p)
\]
linear disjunkt \"{u}ber $k(\mu_p)$ sind.  Wir setzen
\[
S=\{\p_1,\ldots,\p_m,\q_1,\ldots,\q_m\}.
\]
Es gilt $h^2(X\sm S,T)=2m$ und nach Korollar~\ref{h2lokali} ist die nat\"{u}rliche Abbildung
\[
H_\et^2(X\sm S, T) \lang \prod_{i=1}^m H^2(k_{\p_i}) \oplus \prod_{i=1}^m H^2(k_{\q_i})
\]
ein Isomorphismus.  Es sei $\eta_a$ ein Erzeuger von $H_\et^1(X\sm \{ \q_a\}, T)$. Wir betrachten den durch $\eta_1,\ldots,\eta_m$ in $H_\et^1(X\sm S, T)$ aufgespannten $m$-di\-men\-sio\-na\-len Vektorraum~$V$. Dann gilt
\[
H_\et^1(X\sm S, T) \cong H_\et^1(X\sm S_0, T) \oplus V.
\]
Nach Satz~\ref{komponenten} gilt f\"{u}r $a,b, i\in \{1,\ldots, m\} $
\[
(\eta_a \cup \eta_b)_{\q_i}= 0 = (\eta_a \cup \eta_b)_{\p_i},
\]
also ist das Cup-Produkt $V\otimes V \to H_\et^2(X\sm S, T)$ trivial. Wir behaupten, dass das Cup-Produkt
\[
H_\et^1(X\sm S_0, T) \otimes V \lang H_\et^2(X\sm S, T)
\]
surjektiv ist. Dazu w\"{a}hlen wir  Elemente $\chi_a,\psi_a \in H_\et^1(X\sm S_0, T)$, $a=1,\ldots, m$, so dass
\[
\chi_a(\T_{\p_a})\neq 0,\ \chi_a(\Frob_{\q_a})=0,\ \psi_a(\Frob_{\p_a})\neq 0.
\]
Wir behaupten, dass die Elemente $\chi_1\cup \eta_1,\ldots, \chi_m\cup \eta_m, \psi_1\cup \eta_1,\ldots, \psi_m\cup \eta_m$ den $2m$-dimensionalen Vektorraum $H_\et^2(X\sm S, T) $ erzeugen. Dazu betrachten wir die Matrix

\smallskip
\[
\left(
\begin{array}{cccccc}
(\chi_1 \cup \eta_1)_{\p_1}& \cdots & (\chi_1 \cup \eta_1)_{\p_m} & (\chi_1 \cup \eta_1)_{\q_1}& \cdots & (\chi_1 \cup \eta_1)_{\q_m}\\
&\vdots && \vdots&&\vdots \\
(\chi_m \cup \eta_m)_{\p_1}&\cdots &(\chi_m \cup \eta_m)_{\p_m}& (\chi_m \cup \eta_m)_{\q_1}& \cdots&(\chi_m \cup \eta_m)_{\q_m}\\
(\psi_1 \cup \eta_1)_{\p_1}& \cdots & (\psi_1 \cup \eta_1)_{\p_m} & (\psi_1 \cup \eta_1)_{\q_1}& \cdots & (\psi_1 \cup \eta_1)_{\q_m}\\
&\vdots && \vdots&&\vdots \\
(\psi_m \cup \eta_m)_{\p_1}&\cdots &(\psi_m \cup \eta_m)_{\p_m}& (\psi_m \cup \eta_m)_{\q_1}& \cdots&(\psi_m \cup \eta_m)_{\q_m}\\
\end{array} \right) \lower1.35cm\hbox{.}
\]

\smallskip\noindent
Bezeichnen wir ein von Null verschiedenes Element mit $*$ und ein beliebiges mit $?$, so hat nach Satz~\ref{komponenten} und unseren Wahlen diese Matrix die Gestalt

\smallskip
\[
\left(
\begin{array}{cccccccc}
*&0& \cdots & 0 & 0&0& \cdots & 0\\
0&*& \cdots & 0 & 0&0& \cdots & 0\\
&&\ddots && &&\vdots& \\
0&0&\cdots &*& 0& 0 &\cdots&0\\
?&?& \cdots  &? & *&0& \cdots & 0\\
?&?& \cdots  &? & 0&*& \cdots & 0\\
&&\vdots && \vdots&&\ddots \\
?&?& \cdots  &? & 0&0& \cdots & *\\
\end{array} \right) \lower1.7cm\hbox{,}
\]

\smallskip\noindent
was die Behauptung zeigt. Insbesondere ist das Cup-Produkt
\[
H_\et^1(X\sm S, T) \otimes H_\et^1(X\sm S, T) \to H_\et^2(X\sm S, T)
\]
surjektiv. Da es \"{u}ber die Inklusion
\[
H^2(G_S^T(k)(p)) \hookrightarrow H_\et^2(X\sm S, T)
\]
faktorisiert, ist diese ein Isomorphismus. Damit erf\"{u}llt die Gruppe $G_S^T(k)(p)$ die Voraussetzungen von Theorem~\ref{mildkrit} und wir erhalten $\cd\, G_S^T(k)(p)=2$. Korollar~\ref{kpi1crit} impliziert nun, dass $(X\sm S, T)$ die $K(\pi,1)$-Eigenschaft f\"{u}r $p$ hat. Nach der Wahl von $S_0$  gilt schlie{\ss}lich $V_{S\sm \{\p\}}^T(k)=0$ f\"{u}r jedes $\p\in S$, woraus folgt, dass jedes $\p\in S$ in $k_S^{T,\el}$ verzweigt, also erst recht in $k_S^T(p)$.
\end{proof}

Im Fall $\delta=1$ brauchen wir das folgende Lemma.

\begin{lemma} \label{einheitswda} Es gelte $\delta=1$ und $T$ sei eine endliche, nichtleere Stellenmenge mit $V_T^\varnothing(k)=0$ und $S_p\subset T$. Ist $k$ ein Zahlk\"{o}rper, so nehmen wir zudem $k$ als total imagin\"{a}r an (automatisch falls $p\neq 2$). Dann gilt $\null_p \Cl_T(k)=0$ und
\[
k_T^\el = k(\sqrt[p]{E_{k,T}}).
\]
Zus\"{a}tzlich sei $S=\{\p_1,\ldots,\p_n\}$ eine endliche, zu $T$ disjunkte Stellenmenge. Dann haben wir eine Inklusion
\[
 k_S^{T,\el} \subset k_T^\el(\sqrt[p]{s_1},\ldots,\sqrt[p]{s_n}),
\]
wobei $s_i=s_{\p_i}$, $i=1,\ldots,n$. Die folgenden Aussagen sind \"{a}quivalent:

\smallskip
\begin{compactitem}
\item[\rm (i)] Die Elemente $\Frob_{\p_1}, \ldots, \Frob_{\p_n} $ erzeugen $G(k_T^\el|k)$.\smallskip
\item[\rm (ii)]  $V_S^T(k)=0$.
\end{compactitem}

\smallskip\noindent
F\"{u}r eine Teilmenge $I \subset \{1,\ldots,n\}$  sind  \"{a}quivalent:

\smallskip
\begin{compactitem}
\item[\rm (a)] Die Elemente $\{\Frob_{\p_i}, i\in I\}$ sind linear unabh\"{a}ngig in $G(k_T^\el|k)$.
\item[\rm (b)] Die Erweiterungen $k_T^\el$, $k_S^{T,\el}$ und $k(\sqrt[p]{s_i}, i\in I)$ sind linear disjunkt \"{u}ber $k$.
\end{compactitem}
\end{lemma}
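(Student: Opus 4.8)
The plan is to establish the four assertions one after another, in each case reducing everything to Kummer theory (available because $\delta=1$, i.e.\ $\mu_p\subset k$), class field theory, and the cohomology formula of Theorem~\ref{globcoh}; I use throughout that $S_p\subset T$ forces every prime occurring in $S$ to be prime to $p$. For the first assertion, note that by Kummer theory $V_T^\varnothing(k)\subset k^\times/k^{\times p}$ is the Pontryagin dual of $\Gal(L|k)$, where $L$ is the maximal elementary abelian $p$-extension of $k$ unramified outside $T$ and completely split at $T$. Since $S_p\subset T$, being unramified outside $T$ and split at $T$ makes $L|k$ unramified everywhere, and then class field theory identifies $\Gal(L|k)$ with $\Cl_T(k)/p$. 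Hence $V_T^\varnothing(k)=0$ is equivalent to $\Cl_T(k)/p=0$, i.e.\ to $\null_p\Cl_T(k)=0$. Granting this, the exact sequence of Lemma~\ref{VSchange} with $S=\varnothing$ collapses to an isomorphism $E_{k,T}/p\liso V_\varnothing^T(k)$, and since $k_T^\el=k(\sqrt[p]{V_\varnothing^T(k)})$ we obtain $k_T^\el=k(\sqrt[p]{E_{k,T}})$.

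For the second assertion, let $a\in k^\times$ with $k(\sqrt[p]{a})\subset k_S^{T,\el}$; then $v(a)\equiv0\bmod p$ for $v\notin S$ and $a\in k_v^{\times p}$ for $v\in T$. Put $a'=a\cdot\prod_{i=1}^n s_i^{-v_{\p_i}(a)}$. Using $v_{\p_j}(s_j)\equiv1$, $v_{\p_j}(s_i)\equiv0$ for $i\neq j$, and $v(s_i)\equiv0$ for $v\notin T\cup\{\p_i\}$ (all $\bmod p$), one sees immediately that $v(a')\equiv0\bmod p$ for every $v\notin T$, so $a'\in V_\varnothing^T(k)=E_{k,T}/p$. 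Thus $a$ lies in the subgroup of $k^\times/k^{\times p}$ generated by $E_{k,T}$ and $s_1,\dots,s_n$, whence $k(\sqrt[p]{a})\subset k_T^\el(\sqrt[p]{s_1},\dots,\sqrt[p]{s_n})$; as $a$ was arbitrary this is the inclusion claimed. For the third assertion, Kummer theory gives $\Gal(k_T^\el|k)\cong(E_{k,T}/p)^\vee$, and under this isomorphism $\Frob_{\p_i}$ is the functional $u\mapsto\sigma(\sqrt[p]{u})/\sqrt[p]{u}$ (with $\sigma=\Frob_{\p_i}$), whose kernel is $\{u\in E_{k,T}/p:u\in k_{\p_i}^{\times p}\}$. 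Hence $\Frob_{\p_1},\dots,\Frob_{\p_n}$ generate $(E_{k,T}/p)^\vee$ exactly when the common kernel of these functionals vanishes, and that common kernel is precisely $V_S^T(k)$ (using once more $V_\varnothing^T(k)=E_{k,T}/p$); this is (i)$\Leftrightarrow$(ii).

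For the fourth assertion, put $S_I=\{\p_i:i\in I\}$; I claim both (a) and (b) are equivalent to $k_{S_I}^{T,\el}=k$. The family $\{\Frob_{\p_i}:i\in I\}$ is linearly independent in $(E_{k,T}/p)^\vee$ iff the evaluation map $E_{k,T}/p\to\F_p^I$, $u\mapsto(\Frob_{\p_i}(u))_{i\in I}$, is surjective; since its kernel is $V_{S_I}^T(k)$ and $\dim_{\F_p}E_{k,T}/p=r+\#T$, this means $\dim_{\F_p}V_{S_I}^T(k)=r+\#T-\#I$, which by the formula $h^1(X\sm S_I,T)=\#I+\dim_{\F_p}V_{S_I}^T(k)-r-\#T$ of Theorem~\ref{globcoh} amounts to $h^1(X\sm S_I,T)=0$, i.e.\ to $k_{S_I}^{T,\el}=k$. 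On the other hand, $k_T^\el$ and $k(\sqrt[p]{s_i}:i\in I)$ are linearly disjoint over $k$ by construction of the $s_i$, so (b) is equivalent to $k_S^{T,\el}\cap k_T^\el(\sqrt[p]{s_i}:i\in I)=k$; by the second assertion applied to $S_I$ we have $k_{S_I}^{T,\el}\subset k_S^{T,\el}\cap k_T^\el(\sqrt[p]{s_i}:i\in I)$, while a ramification count shows this intersection is unramified outside $S_I$ and split at $T$, hence lies in $k_{S_I}^{T,\el}$; so the intersection equals $k_{S_I}^{T,\el}$. Therefore (b)$\Leftrightarrow k_{S_I}^{T,\el}=k\Leftrightarrow$(a).

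The main obstacle is the fourth assertion, where one has to recognize that (a) and (b), despite their different flavours, both collapse to the single vanishing $k_{S_I}^{T,\el}=k$: for (b) this rests on pinning down the intersection $k_S^{T,\el}\cap k_T^\el(\sqrt[p]{s_i}:i\in I)$ exactly (one inclusion from the second assertion, the other from careful ramification bookkeeping), and for (a) on the exact Euler--Poincar\'e count of Theorem~\ref{globcoh}, where the identification of the kernel of the evaluation map with $V_{S_I}^T(k)$ already uses the first assertion ($V_\varnothing^T(k)=E_{k,T}/p$).
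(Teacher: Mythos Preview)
Your argument is correct. The first three assertions follow the paper's proof closely in spirit (your direct Kummer-theoretic identification of $V_T^\varnothing(k)$ with the dual of $\Cl_T(k)/p$ is a slight shortcut compared to the paper's route via Theorem~\ref{bstdual} and Poitou--Tate, and you obtain $k_T^\el=k(\sqrt[p]{E_{k,T}})$ by reading off the Kummer radical rather than by a dimension count with Theorem~\ref{globcoh}, but the substance is the same).

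For the equivalence (a)$\Leftrightarrow$(b) you take a genuinely different route. The paper translates both conditions into statements about the image of the subgroup $H_I=\langle s_i:i\in I\rangle$ under the composition
\[
H_I \longrightarrow (I_{k,T}/p)/\im(E_{k,T}/p) \stackrel{\sim}{\longrightarrow} C_T(k)/p \stackrel{\rec}{\longrightarrow} G(k_T^\el|k),
\]
identifying (b) with the injectivity of this map and observing that $s_i\mapsto\Frob_{\p_i}^{-1}$; this uses the idele-class-group exact sequence and the reciprocity map directly. You instead discover a common avatar $k_{S_I}^{T,\el}=k$: for (a) via the Euler--Poincar\'e formula of Theorem~\ref{globcoh}, and for (b) by pinning down the intersection $k_S^{T,\el}\cap k_T^\el(\sqrt[p]{s_i}:i\in I)$ exactly through ramification bookkeeping. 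Your path avoids the idele machinery at the cost of invoking the global dimension formula; the paper's path is more conceptual and shows explicitly how the $s_i$ sit inside $G(k_T^\el|k)$ via reciprocity.
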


\begin{remark}
Sind  $\Frob_{\p_1}, \ldots, \Frob_{\p_n} $ linear unabh\"{a}ngig in  $G(k_T^\el|k)$, so scheint die Inklusion $k_S^{T,\el} \subset k_T^\el(\sqrt[p]{s_1},\ldots,\sqrt[p]{s_n})$ im Widerspruch zu Aussage (b) f\"{u}r $I=\{1,\ldots,n\}$  zu stehen. Aber in diesem Fall gilt $k_S^{T,\el}=k$.
\end{remark}

\begin{proof}[Beweis von Lemma~\ref{einheitswda}] Nach Theorem~\ref{bstdual} gilt $\Sha^2(k,T)=0$, woraus mit Poitou-Tate-Dualit\"{a}t (\cite{NSW}, Theorem 8.6.7) $\Sha^1(k,T,\mu_p)=0$ folgt. Wegen $\delta=1$ und nach \cite{NSW}, Lemma~8.6.3 erhalten wir $\Hom(\Cl_T(k),\Z/p\Z)= \Sha^1(k,T)=0$. Da $\Cl_T(k)$ endlich ist, folgt hieraus $\null_p\Cl_T(k)=0$. Nach dem Dirichletschen Einheitensatz gilt $\dim_{\F_p} E_{k,T}/p = \# T + r$.  Nach Theorem~\ref{globcoh} erhalten wir
\[
h^1(X\sm T)= \# T + \sum_{\p\in S_p} [k_\p:\Q_p] - r = \# T +r,
\]
was $k(\sqrt[p]{E_{k,T}})=k_T^\el$ impliziert.
Nach Lemma~\ref{VSchange} haben wir einen Isomorphismus
\[
E_{k,T}/p \liso V_\varnothing^T(k).
\]
Sei nun $S$ eine endliche, zu $T$ disjunkte Stellenmenge und $k(\sqrt[p]{\alpha})$, $\alpha \in k^\times/k^{\times p}$, eine zyklische Teilerweiterung von $k_S^{T,\el}|k$. Dann gilt $\alpha\in V_T^S(k)$ und wir finden Exponenten $a_1,\ldots,a_n$, so dass
$\alpha \cdot \bar s_1^{a_1}\cdots \bar s_n^{a_n} \in V_\varnothing^T(k)= E_{k,T}/p$. Daher gilt $k(\sqrt[p]{\alpha}) \subset k(\sqrt[p]{E_{k,T}},\sqrt[p]{s_1},\ldots,\sqrt[p]{s_n})$, und folglich
\[
k_S^{T,\el} \subset k_T^\el(\sqrt[p]{s_1},\ldots,\sqrt[p]{s_n}).
\]
Als n\"{a}chstes beweisen wir die \"{A}quivalenz der Bedingungen (i) und (ii). Es gilt $V_S^T(k)=0$ genau dann, wenn die nat\"{u}rliche Abbildung $E_{k,T}/p \to \prod_{i=1}^n k_{\p_i}^\times/k_{\p_i}^{\times p}$ injektiv ist. Dies ist \"{a}quivalent dazu, dass f\"{u}r kein von Eins verschiedenes Element $e\in E_{k,T}/p$  die zyklische Erweiterung $k(\sqrt[p]{e})|k$ voll zerlegt bei $S$ ist. Daher ist (ii) \"{a}quivalent dazu, dass
die Elemente $\Frob_{\p_1}, \ldots, \Frob_{\p_n}$ die Galoisgruppe $G(k(\sqrt[p]{E_{k,T}})|k)=G(k_T^\el|k)$ erzeugen.  Dies zeigt die \"{A}quivalenz zwischen (i) und~(ii).

\medskip
Wir bezeichnen mit $I_{k,T}$ die Gruppe der $T$-Idele  und mit $C_T(k)$ die Gruppe $T$-Idelklassen von $k$. Wegen $\Cl_T(k)(p)=0$ haben wir nach \cite{NSW}, Proposition 8.3.5, die exakte Folge
\[
0 \lang E_{k,T} \otimes \Z_p \lang I_{k,T} \otimes \Z_p \lang C_T(k) \otimes \Z_p \lang 0\,.
\]
Nun betrachten wir eine Teilmenge $I\subset \{1,\ldots, n\}$ und es sei $H_I \subset k^\times/k^{\times p}$ die von den $s_i$, $i\in I$, erzeugte Untergruppe. Wegen $S_p\subset T$ gilt
\[
k_S^{T,\el}=k\left(\sqrt[p]{V_T^S(k)}\right).
\]
Nach Kummertheorie sind die Erweiterungen $k_T^\el$, $k_S^{T,\el}$ und $k(\sqrt[p]{s_i}, i\in I)$ genau dann linear disjunkt \"{u}ber $k$, wenn der Homomorphismus
\[
E_{k,T}/p \times V_T^S(k) \times H_I  \lang  k^\times/k^{\times p}
\]
injektiv ist. Wegen $H_I \cap E_{k,T}/p = 1$ ist dies \"{a}quivalent zu
\[
( H_I \cdot E_{k,T}/p ) \cap V_T^S(k) =  1,
\]
und damit zur Injektivit\"{a}t des Homomorphismus
\[
H_I \cdot E_{k,T}/p \lang \prod_{\p\in T} k_\p^\times/k_\p^{\times p}=I_{k,T}/p,
\]
also zur Injektivit\"{a}t der Komposition
\[
H_I \lang (I_{k,T}/p)/ \im (E_{k,T}/p) \liso C_T(k)/p \mathop{\longrightarrow}\limits_\rec^\sim G(k_T^\el|k).
\]
Nach Klassenk\"{o}rpertheorie bildet sich $s_i$ auf $\Frob_{\p_i}^{-1}\in G(k_T^\el|k)$ ab, weshalb die Injektivit\"{a}t dieser Abbildung \"{a}quivalent zu Aussage (a) ist.
\end{proof}

\begin{proof}[Beweis von Theorem~\ref{hilf} im Fall $\delta=1$] Wir w\"{a}hlen eine endliche Stellenmenge $T_0$ so gro{\ss}, dass $V_{T\cup T_0}^\varnothing(k)=0$ und $S_p\subset T\cup T_0$ gilt. Zwecks Vereinfachung der Notation ersetzen wir $T$ durch $T\cup T_0$.
Nun w\"{a}hlen wir zu jedem von Null verschiedenen Element $g\in G(k_T^\el|k)$ ein nicht in $T\cup \M$ liegendes Primideal $\p_g$ mit $g=\Frob_{\p_g}$. Die Gesamtheit dieser Primideale nennen wir ${S_0}$ und wir w\"{a}hlen eine Numerierung
\[
{S_0}=\{\p_0, \ldots, \p_m\}.
\]
Wie vorher setzen wir $s_i=s_{\p_i}$. Nach Lemma~\ref{einheitswda} gilt  $V_{S_0}^T(k)=0$, $k_T^\el= k(\sqrt[p]{E_{k,T}})$ und $ k_{S_0}^{T,\el} \subset k_T^\el(\sqrt[p]{s_0},\ldots,\sqrt[p]{s_m})$. Nach Satz~\ref{globcoh} erhalten wir $h^2(X\sm {S_0},T)=m$ und $h^1(X\sm T)=\# T +r:=n$.

\smallskip
Nun sei $a$, $1\leq a\leq m$, ein Index. Wir w\"{a}hlen  eine Teilmenge $I_a \subset \{1,\ldots,m\}$ mit $a\notin I_a$ der Kardinalit\"{a}t $n-1$, so dass sowohl
$(\Frob_{\p_0}, \{ \Frob_{\p_i}\}_{i\in I_a})$ als auch $(\Frob_{\p_a}, \{ \Frob_{\p_i}\}_{ i\in I_a})$ eine Basis von $G(k_T^\el|k)$ ist. Dies ist m\"{o}glich: sind $\Frob_{\p_0}$ und $\Frob_{\p_a}$ linear abh\"{a}ngig in $G(k_T^\el|k)$, so erg\"{a}nzen wir $\Frob_{\p_0}$ zu einer Basis. Sind die beiden Elemente linear unabh\"{a}ngig, so w\"{a}hlen wir $a'$ mit $\Frob_{\p_{a'}}=\Frob_{\p_0} + \Frob_{\p_a}$ und erg\"{a}nzen  $(\Frob_{\p_0}, \Frob_{\p_{a'}})$ zu einer Basis.

Nach Lemma~\ref{einheitswda} sind die Erweiterungen $k_T^\el|k$ (Grad $p^n$), $k_{S_0}^{T,\el}$ (Grad $p^{m+1-n}$) und $k(\sqrt[p]{s_a}, \sqrt[p]{s_i}, i\in I_a)$ (Grad $p^n$) linear disjunkt \"{u}ber $k$. Aus Gradgr\"{u}nden ist ihr Kompositum gleich $k_T^\el(\sqrt[p]{s_0},\ldots,\sqrt[p]{s_m})$ (Grad $p^{m+1+n}$).

Nach Wahl von $I_a$ gilt die gleiche Aussage auch, wenn wir $k(\sqrt[p]{s_a}, \sqrt[p]{s_i}, i\in I_a)$ durch $k(\sqrt[p]{s_0}, \sqrt[p]{s_i}, i\in I_a)$ ersetzen.

F\"{u}r $i\in \{0,\ldots,m\}$ sei $\T_i \subset G(k_{S_0}^{T,\el}|k)$ die Tr\"{a}gheitsgruppe von $\p_i$. Da die Elemente $\Frob_{\p_j}$, $j\neq i$, immer noch $G(k_T^\el|k)$ erzeugen, gilt $V_{{S_0}\backslash \{\p_i\}}^T(k)=0$  nach Lemma~\ref{einheitswda}. Daher verzweigt $\p_i$ in $k_{S_0}^{T,\el}|k$ und $\T_i$ ist zyklisch von der Ordnung~$p$. Nach Konstruktion erzeugen die $m-n+1$ vielen zyklischen Untergruppen der Ordnung~$p$
\[
\T_i, \ i \notin \{ a\} \cup I_a\,,
\]
den $(m-n+1)$-di\-men\-sio\-na\-len Vektorraum $G(k_{S_0}^{T,\el}|k)$ und die gleiche Aussage ist auch f\"{u}r die Untergruppen
\[
\T_i, \ i \notin \{ 0\} \cup I_a\,,
\]
richtig.
Daher erzeugen die Untergruppen $\T_i$,  $i \notin \{ 0, a\} \cup I_a$ einen $(m-n)$-di\-men\-sio\-na\-len Unterraum, die Erweiterung $k_{\{\p_0,\p_a\}}^{T,\el}$ ist zyklisch von der Ordnung $p$ und bei $\p_0$ und $\p_a$ verzweigt.
Wir betrachten nun f\"{u}r $a=1,\ldots, m$ und ein Primideal $\q$ die folgende Bedingung $(C_a)$:

\medskip
\begin{compactitem}
\item $\q \notin T\cup \M$. \smallskip
\item $\q$ zerf\"{a}llt vollst\"{a}ndig in $k_T^\el|k$.\smallskip
\item F\"{u}r jedes $i\in I_a$ zerf\"{a}llt $\q$  in $k(\sqrt[p]{s_i})|k$.\smallskip
\item $\q$ ist tr\"{a}ge in der Erweiterung $k(\sqrt[p]{s_a})|k$.\smallskip
\item Das Bild von $\Frob_{\q}$ in $G(k_{S_0}^{T,\el}|k)$  liegt in $G(k_{S_0}^{T,\el}|k_{\{\p_0,\p_a\}}^{T,\el})\sm \{ 0\}$.\smallskip
\end{compactitem}

\medskip\noindent
Mit Hilfe des Tschebotarjowschen Dichtigkeitssatzes f\"{u}r die elementar-abelsche Erweiterung $k_T^\el(\sqrt[p]{s_0},\ldots,\sqrt[p]{s_m} )|k$  finden wir ein Primideal $\q_1$ in~$k$, das Bedingung $(C_1)$ erf\"{u}llt.  Nach Lemma~\ref{dual} ist die Erweiterung $k_{\{\q_1\}}^{T,\el}$ zyklisch von der Ordnung $p$ und bei $\q_1$ verzweigt. Nun w\"{a}hlen wir nacheinander Stellen $\q_2,\ldots,\q_m$ in $k$, so dass gilt:

\medskip
\begin{compactitem}
\item $\q_a$ erf\"{u}llt Bedingung $(C_a)$, und \smallskip
\item $\q_a$ ist f\"{u}r $b<a$  zerlegt in $k_{\{\q_b\}}^{T,\el}|k$.
\end{compactitem}

\medskip\noindent
Insbesondere sind die $\q_i$ paarweise verschieden und es gilt $N(\q_i)\equiv 1 \bmod p$. Wir behaupten, dass
\[
S=\{\p_0,\ldots,\p_m,\q_1,\ldots,\q_m\}
\]
die gew\"{u}nschten Eigenschaften hat. Es gilt $h^2(X\sm S,T)=2m$ und nach Korollar~\ref{h2lokali} ist die nat\"{u}rliche Abbildung
\[
H_\et^2(X\sm S, T) \lang \prod_{i=1}^m H^2(k_{\p_i}) \oplus \prod_{i=1}^m H^2(k_{\q_i} )
\]
ein Isomorphismus.

\noindent
Es sei $\eta_a$ ein Erzeuger von $H_\et^1(X\sm \{ \q_a\}, T)$. Wir betrachten den durch $\eta_1,\ldots,\eta_m$ in $H_\et^1(X\sm S, T)$ aufgespannten $m$-di\-men\-sio\-na\-len Vektorraum $V$. Dann gilt
\[
H_\et^1(X\sm S, T) \cong H_\et^1(X\sm S_0, T) \oplus V.
\]
Nach Lemma~\ref{dual} gilt
\[
k_T^\el k_{\{\q_a\}}^{T,\el} = k_T^\el(\sqrt[p]{s_{\q_a}}).
\]
Satz~\ref{komponenten} impliziert daher f\"{u}r $a,b, i\in \{1,\ldots, m\} $ das Verschwinden
\[
(\eta_a \cup \eta_b)_{\q_i}= 0 = (\eta_a \cup \eta_b)_{\p_i}.
\]
Es bleibt zu zeigen, dass das Cup-Produkt
\[
H_\et^1(X\sm S_0, T) \otimes V \lang H_\et^2(X\sm S, T)
\]
surjektiv ist. Dazu w\"{a}hlen wir f\"{u}r jedes $a\in \{1,\ldots, m\}$ einen Erzeuger $\chi_a$ von $H_\et^1(X\sm \{\p_0,\p_a\}, T)$. Nach Konstruktion gilt $\chi_a(\T_a)\neq 0$, $\chi_a(\T_i)=0$ f\"{u}r $i\notin \{0,a\}\cup I_a$ und $\chi_a(\Frob_{\q_a})=0$. Desweiteren w\"{a}hlen wir $\psi_a\in H_\et^1(X\sm S, T)$ mit $\psi_a(\Frob_{\q_a})\neq 0$.

Wir behaupten, dass die Elemente $\chi_1\cup \eta_1,\ldots, \chi_m\cup \eta_m, \psi_1\cup \eta_1,\ldots, \psi_m\cup \eta_m$ den $2m$-dimensionalen Vektorraum $H_\et^2(X\sm S, T) $ erzeugen. Dies sieht man in genau der gleichen Weise wie im Fall $\delta=0$ und auch der Rest des Beweises ist von diesem Punkt an w\"{o}rtlich der gleiche.
\end{proof}

\section{Beweis von Theorem~\ref{haupt}}

In diesem Abschnitt beweisen wir Theorem~\ref{haupt}. Wir f\"{u}hren die folgende Bezeichnung ein: es sei  $K|k$ eine (typischerweise unendliche) separable algebraische Erweiterung und $S$ eine endliche Stellenmenge von $k$.
Dann schreiben wir (die Koeffizienten $\F_p$ werden wie vorher ausgelassen)
\[
\ressum_{{\mathfrak p} \in S(K)} H^i(K_\p)
\stackrel{df}{=}
\varinjlim_{k'\subset K} \bigoplus_{{\mathfrak p} \in S(k')} H^i(k'_\p),
\]
wobei sich der Limes auf der rechten Seite \"{u}ber alle endlichen Teilerweiterungen $k'|k$ von $K|k$ erstreckt.
Ist $K|k$ Galoissch mit Gruppe $G(K|k)$, so ist die verallgemeinerte Summe  $\ressumsmall_{{\mathfrak p} \in S(K)} H^i(K_\p)$ gleich dem maximalen diskreten $\Gal(K|k)$-Untermodul des Produkts  $\prod_{{\mathfrak p} \in S(K)} H^i(K_\p)$.

\medskip
Der n\"{a}chste Satz f\"{u}hrt  Theorem~\ref{haupt} auf Theorem~\ref{hilf} zur\"{u}ck.

\begin{proposition}\label{erweiterung}
Es seien $S$, $T$, $S_0$ und $T_0$ paarweise disjunkte endliche Stellenmengen des globalen K\"{o}rpers~$k$. Angenommen $(X\sm S_0, S\cup T \cup T_0)$ hat die $K(\pi,1)$-Eigenschaft f\"{u}r $p$. Dann haben auch $(X\sm S_0, S\cup T)$, $(X\sm S_0, T)$ und $(X\sm (S\cup S_0), T)$ die $K(\pi,1)$-Eigenschaft  f\"{u}r $p$. Es realisiert $k_{S\cup S_0}^T(p)$ die maximale $p$-Erweiterung $k_\p(p)$ f\"{u}r alle\/ $\p\in S$ und der nat\"{u}rliche Homomorphismus
\[
\freeproductmed_{\p\in S(k_{S_0}^{S\cup T}(p))}G(k_\p(p)|k_\p) \lang G\big(k_{S\cup S_0}^T(p)|k_{S_0}^{S\cup T}(p)\big).
\]
ist ein Isomorphismus.
\end{proposition}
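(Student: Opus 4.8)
The plan is to carry everything over to the (infinite) field $L:=k_{S_0}^{S\cup T}(p)$ and its completions. Two facts are used throughout: every $\p\in S$ splits completely in $L$, so $L_\P=k_\p$ for all $\P\in S(L)$; and any ``maximal pro-$p$ extension'' of $L$ defined by $\Gal(L|k)$-invariant local conditions is Galois and pro-$p$ over $k$, hence equals the corresponding maximal pro-$p$ extension of $k$ — in particular $\Gal\big(k_{S\cup S_0}^T(p)\,|\,L\big)=\pi_1^\et\big((X\sm(S\cup S_0),T)_L\big)(p)$ (and the analogue with $T$ in place of $S\cup T$ or $S\cup T\cup T_0$), and the maximal pro-$p$ extension of $L$ unramified outside $S_0(L)$ and split at $(S\cup T)(L)$ is $L$ itself, so no ``remaining free factor'' will occur. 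Moreover every base change $(\,\cdot\,)_L$ below is a (pro-)cover of the corresponding curve over $k$, so Hochschild--Serre is available.

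\smallskip
First I would prove a \emph{removal-of-markings lemma}: if $(X\sm S_0,T')$ has the $K(\pi,1)$-property for $p$ and $T''\subseteq T'$, then so does $(X\sm S_0,T'')$; applied with $(T',T'')=(S\cup T\cup T_0,\ S\cup T)$ and $(S\cup T\cup T_0,\ T)$ this gives the first two $K(\pi,1)$-assertions. For the lemma set $L':=k_{S_0}^{T'}(p)$, so $(X\sm S_0,T')_{L'}=\widetilde{(X\sm S_0,T')}(p)$ has vanishing cohomology in all positive degrees (Lemma~\ref{kpi1lem}). Excision over $L'$ (Satz~\ref{localcoh}, Korollar~\ref{ausschneid2}) comparing the markings $T'$ and $T''$ then gives $H^1_\et\big((X\sm S_0,T'')_{L'}\big)\cong\ressum_{\P\in(T'\sm T'')(L')}H^1_\nr(L'_\P)$ and $H^i_\et\big((X\sm S_0,T'')_{L'}\big)=0$ for $i\geq2$. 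As $\Gal\big(k_{S_0}^{T''}(p)|L'\big)=\pi_1^\et\big((X\sm S_0,T'')_{L'}\big)(p)$ and $\phi_2$ is injective (Lemma~\ref{kpi1lem}), this group has trivial $H^2$, hence is free pro-$p$, of cohomological dimension $\leq1$. In the spectral sequence
\[
H^i\Big(\Gal\big(k_{S_0}^{T''}(p)|L'\big),\ H^j_\et\big(\widetilde{(X\sm S_0,T'')}(p)\big)\Big)\Longrightarrow H^{i+j}_\et\big((X\sm S_0,T'')_{L'}\big)
\]
only the columns $i=0,1$ survive and the abutment vanishes in degrees $\geq2$; hence $H^n_\et\big(\widetilde{(X\sm S_0,T'')}(p)\big)$ has no non-zero $\Gal\big(k_{S_0}^{T''}(p)|L'\big)$-invariants for $n\geq2$, and as a discrete $p$-torsion module over a pro-$p$ group it vanishes. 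With $H^1$ zero by construction, $(X\sm S_0,T'')$ has the $K(\pi,1)$-property by Lemma~\ref{kpi1lem}.

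\smallskip
Since $(X\sm S_0,S\cup T)$ now has the $K(\pi,1)$-property, $H^i_\et\big((X\sm S_0,S\cup T)_L\big)=0$ for $i\geq1$; deleting the marked points $S(L)$ and applying excision once more (Satz~\ref{localcoh}, Korollar~\ref{ausschneid2}, with $L_\P=k_\p$) I would get $H^i_\et\big((X\sm(S\cup S_0),T)_L\big)\cong\ressum_{\P\in S(L)}H^i(k_\p)$ for $i\geq1$ and $H^0_\et\big((X\sm(S\cup S_0),T)_L\big)=\F_p$. This is exactly the cohomology of a free pro-$p$ product of the groups $\Gal(k_\p(p)|k_\p)$ over the profinite $\Gal(L|k)$-set $S(L)$; feeding it — with the absence of a remaining free factor and the elementary properties of $\phi_1,\phi_2$ (Lemma~\ref{kpi1lem}) — into the theory of free products of bundles of pro-$p$ groups over a profinite base (\cite{NSW}, Ch.~IV) yields that the natural homomorphism
\[
\freeproductmed_{\P\in S(L)}\Gal\big(k_\p(p)\,|\,k_\p\big)\;\lang\;\Gal\big(k_{S\cup S_0}^T(p)\,|\,L\big)
\]
is an isomorphism (the abstract ingredient: a morphism of pro-$p$ groups that is an isomorphism on $H^1(\,\cdot\,,\F_p)$ and injective on $H^2(\,\cdot\,,\F_p)$ is an isomorphism). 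In particular it is injective on every factor, i.e.\ $k_{S\cup S_0}^T(p)_\p=k_\p(p)$ for all $\p\in S$. Then $\Gal\big(k_{S\cup S_0}^T(p)|L\big)$ is a free pro-$p$ product of local groups of $p$-cohomological dimension $2$, so $H^i\big(\Gal(k_{S\cup S_0}^T(p)|L)\big)\cong\ressum_{\P\in S(L)}H^i(k_\p)$ for all $i$; comparison with the displayed computation shows $\phi_i$ is an isomorphism in all degrees, whence $(X\sm(S\cup S_0),T)$ has the $K(\pi,1)$-property (Lemma~\ref{kpi1lem}) — and $\cd\,G_{S\cup S_0}^T(k)(p)=2$ by Satz~\ref{globcoh}. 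This establishes all assertions of Satz~\ref{erweiterung}.

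\smallskip
The main obstacle is twofold. In the removal-of-markings lemma everything hinges on excision genuinely producing $H^{\geq2}_\et\big((X\sm S_0,T'')_{L'}\big)=0$: this uses that the $T'$-level universal cover is acyclic — so that after unmarking only degree-$1$ unramified classes remain — and the ensuing \emph{freeness} (hence $\cd\leq1$) of the intermediate group, without which the spectral sequence would not close. In the free-product step the work is to set up the bundle of local Galois groups over $S(L)$ correctly and to check the hypotheses of the criterion of \cite{NSW}, Ch.~IV, against the computed cohomology — in particular that the natural map really induces an \emph{isomorphism} on $H^1$, where the topology of the profinite set $S(L)$ (the ``tree part'' of a free product over a space) must be controlled; this is precisely why both $H^{\geq1}_\et\big((X\sm S_0,S\cup T)_L\big)=0$ and the complete splitting of $S$ in $L$ enter. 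The difficulty is less any single estimate than the simultaneous bookkeeping extracting the free product, the local realisation, the cohomological dimension, and the $K(\pi,1)$-property.
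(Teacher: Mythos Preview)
Your proof is correct, and the removal-of-markings lemma is exactly what the paper carries out (in two passes, first dropping $T_0$, then $S$); the spectral-sequence closure you give is the paper's own argument.

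For the remaining claims---the $K(\pi,1)$-property of $(X\sm(S\cup S_0),T)$, local realisation at $S$, and the free-product isomorphism---you take a somewhat more direct route. The paper first works over the intermediate field $k_{S_0}^T(p)$: there excision on $(X\sm(S\cup S_0),T)_{k_{S_0}^T(p)}$ produces only the quotients $H^i_{/\nr}(k_\p)$, and since the decomposition-group argument inside the marking-removal step has already shown that $k_{S_0}^T(p)$ realises $k_\p^\nr(p)$ for $\p\in S$, these vanish for $i\geq 2$. Hence $\Gal(k_{S\cup S_0}^T(p)\,|\,k_{S_0}^T(p))$ is \emph{free}, which yields $K(\pi,1)$ and local realisation first, and only \emph{afterwards} does the paper pass to $L=k_{S_0}^{S\cup T}(p)$ to read off the free-product isomorphism from the (now established) identification of group and \'etale cohomology. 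You instead work directly over $L$, where $H^2_\et$ is nonzero, and invoke the criterion that a pro-$p$ homomorphism inducing an isomorphism on $H^1$ and an injection on $H^2$ is an isomorphism; $K(\pi,1)$ and local realisation then fall out of the free product. Both routes are valid; the paper's detour buys a clean freeness argument, yours is shorter but needs one extra compatibility check---namely that the excision isomorphism $H^2_\et((X\sm(S\cup S_0),T)_L)\cong\ressum_{\p\in S(L)} H^2(k_\p)$ is given by the local restriction maps, so that it really coincides with $\phi^*$. This is a standard compatibility of global and local boundary maps under excision, but it is the one place where your write-up is slightly under-specified.
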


\begin{proof} Zwecks Vereinfachung der Bezeichnungen schreiben wir $k_S^T$ f\"{u}r $k_S^T(p)$. Wir berechnen zun\"{a}chst die Kohomologie von $(X\sm S_0, S\cup T)_{k_{S_0}^{S\cup T\cup T_0}}$.
Da die Kurve $(X\sm S_0, S\cup T \cup T_0)$ die $K(\pi,1)$-Eigenschaft f\"{u}r $p$ hat, folgt aus Lemma~\ref{kpi1lem}
\[
H^i_\et\big((X\sm S_0, S\cup T \cup T_0)_{k_{S_0}^{S\cup T \cup T_0}}\big)=0,\quad i\geq 1.
\]
Korollar~\ref{ausschneid2} zeigt daher $H^i_\et((X\sm S_0,S\cup T)_{k_{S_0}^{S\cup T \cup T_0}})=0$ f\"{u}r $i\geq 2$ und einen Isomorphismus
\[
H^1_\et\big((X\sm S_0,S\cup T)_{k_{S_0}^{S\cup T \cup T_0}}\big)\liso
 \ressum_{\p\in T_0(k_{S_0}^{S\cup T \cup T_0})} \back
H^1_\nr(k_\p).
\]
Insbesondere realisiert $k_{S_0}^{S\cup T}$  die maximale elementar-abelsche unverzweigte $p$-Er\-wei\-terung $k_\p^{\nrel}$ von $k_\p$ f\"{u}r alle $\p\in T_0$. Die Hochschild-Serre-Spektralfolge zeigt die Inklusion
\[
H^2\big(G(k_{S_0}^{S\cup T}|k_{S_0}^{S\cup T\cup T_0})\big)\hookrightarrow H^2_\et\big((X\sm S_0,S\cup T)_{k_{S_0}^{S\cup T \cup T_0}}\big)=0,
\]
weshalb die Pro-$p$-Gruppe $G(k_{S_0}^{S\cup T}|k_{S_0}^{S\cup T\cup T_0})$  frei ist. Nach Lemma~\ref{kpi1crit} folgt, dass $(X\sm S_0,S\cup T)_{k_{S_0}^{S\cup T \cup T_0}}$ die $K(\pi,1)$-Eigenschaft f\"{u}r $p$ hat. Daher erhalten wir
\[
H^i_\et\big((X\sm S_0,S\cup T)_{k_{S_0}^{S\cup T}}\big)=0,\quad i\geq 1,
\]
was nach Lemma~\ref{kpi1lem} die $K(\pi,1)$-Eigenschaft f\"{u}r $(X\sm S_0, S\cup T)$ zeigt. Die Zerlegungsgruppen $Z_\p(k_{S_0}^{S\cup T}|k_{S_0}^{S\cup T\cup T_0})$ der (unverzweigten) Stellen $\p\in T_0$ sind nichttrivial und torsionsfrei, weshalb der K\"{o}rper $k_{S_0}^{S\cup T}$ die maximale unverzweigte $p$-Erweiterung $k_\p^\nr(p)$ von $k_\p$ f\"{u}r alle $\p\in T_0$ realisiert.

Die gleichen Argumente zeigen nun auch, dass $(X\sm S_0, T)$ die $K(\pi,1)$-Ei\-gen\-schaft f\"{u}r $p$ hat und dass $k_{S_0}^T$ die maximale unverzweigte $p$-Erweiterung $k_\p^\nr(p)$ von $k_\p$ f\"{u}r alle $\p\in S$ realisiert. Mit Hilfe der Ausschneidungsfolge erhalten wir Isomorphismen
\[
H^i_\et\big((X\sm (S\cup S_0),T)_{k_{S_0}^{T}}\big)\liso
\ressum_{\p\in S(k_{S_0}^{ T})}  H^i_{/\nr}(k_\p),\quad i\geq 1.
\]
Nun realisiert $k_{S_0}^{T}$ die maximalen unverzweigten $p$-Erweiterungen $k_\p^\nr(p)$ der Stellen $\p\in S$, weshalb diese Kohomomologiegruppen f\"{u}r $i\geq 2$ verschwinden, und $k_{S\cup S_0}^T$ realisiert die maximale elementar-abelsche $p$-Erweiterung von $k_\p^\nr(p)$ f\"{u}r alle Stellen $\p\in S$. Analog wie oben erhalten wir, dass $(X\sm(S\cup S_0), T)$ die $K(\pi,1)$-Eigenschaft f\"{u}r $p$ hat und dass die Pro-$p$-Gruppe $G(k_{S\cup S_0}^T|k_{S_0}^T)$ frei ist. Die nat\"{u}rlichen Homomorphismen
\[
G\big(k_\p(p)|k_\p^\nr(p)\big) \longrightarrow Z_\p(k_{S\cup S_0}^T|k_{S_0}^T),\quad \p\in S(k_{S_0}^T),
\]
von den vollen lokalen Gruppen auf die Zerlegungsgruppen sind daher Homomorphismen zwischen freien Pro-$p$-Gruppen, die Isomorphismen auf $H^1(-,\F_p)$ induzieren. Folglich sind sie Isomorphismen (siehe \cite{NSW}, 1.6.15). Wir schlie{\ss}en, dass $k_{S\cup S_0}^T$ f\"{u}r jedes $\p\in S$ die maximale $p$-Erweiterung $k_\p(p)$ realisiert. Nun betrachten wir eine weitere Ausschneidungsfolge, um  Isomorphismen
\[
H^i_\et\big((X\sm (S\cup S_0),T)_{k_{S_0}^{S\cup T}}\big)\liso \ressum_{\p\in S(k_{S_0}^{S\cup T})} \back H^i(k_\p)
\]
zu erhalten. Da $(X\sm (S\cup S_0), T)$ die $K(\pi,1)$-Eigenschaft f\"{u}r $p$ hat, stimmt die Kohomologie der Gruppe $G(k_{S\cup S_0}^T|k_{S_0}^{S\cup T})$ mit der \'{e}talen Kohomologie der Pro-Kurve $(X\sm (S\cup S_0),T)_{k_{S_0}^{S\cup T}}$ \"{u}berein. Unter Ausnutzung der Berechnung der Kohomologie eines freien Produktes (\cite{NSW}, Theorem 4.3.14) schlie{\ss}en wir, dass
\[
\phi: \freeproductmed_{\p\in S(k_{S_0}^{S\cup T})}G(k_\p(p)|k_\p) \lang G(k_{S\cup S_0}^T|k_{S_0}^{S\cup T})
\]
ein Homomorphismus zwischen Pro-$p$-Gruppen ist, der Isomorphismen auf $H^i(-,\F_p)$ f\"{u}r alle $i$ induziert.  Nach \cite{NSW}, Proposition 1.6.15, ist $\phi$ ein Isomorphismus.
\end{proof}

Nun folgern wir Theorem~\ref{haupt}. Es seien $S$, $T$ und $\M$ paarweise disjunkte Stellenmengen, wobei $S$ und $T$ endlich seien und $\M$ die Dirichletdichte $\delta(\M)=0$ habe.

\noindent
Wir w\"{a}hlen uns zun\"{a}chst Stellenmengen $S_0$, $T_0$ zu $S\cup T$ und $\M$ wie in Theorem~\ref{hilf}, d.h.\

\smallskip
\begin{compactitem}
\item $S_0$ ist eine nichtleere Menge von Stellen $\p$  der Norm $N(\p)\equiv 1\bmod p$, \smallskip
\item $S_0\cap (S\cup T\cup T_0\cup \M)=\varnothing$, \smallskip
\item $(X\sm S_0, S\cup T\cup T_0)$ hat die $K(\pi,1)$-Eigenschaft f\"{u}r $p$, \smallskip
\item jedes $\p\in S_0$ verzweigt in  $k_{S_0}^{S\cup T\cup T_0}(p)$, \smallskip
\item $V_{S_0}^{S\cup T\cup T_0}(k)=0$,\smallskip
\item das Cup-Produkt
\[
H^1\big(G_{S_0}^{S\cup T\cup T_0}(k)(p)\big) \otimes H^1\big(G_{S_0}^{S\cup T\cup T_0}(k)(p)\big) \lang H^2\big(G_{S_0}^{S\cup T\cup T_0}(k)(p)\big)
\]
ist surjektiv.
\end{compactitem}

\smallskip\noindent
Dann liefert uns Satz~\ref{erweiterung} die folgenden Aussagen aus Theorem~\ref{haupt}:
wir erhalten $cd\, G_{S\cup S_0}^T(p) \leq 2$, Aussage (ii) aber zun\"{a}chst nur f\"{u}r Stellen aus $S$, sowie die Aussagen (iii) und (iv).

Wir \"{u}berzeugen uns nun, dass $k_{S\cup S_0}^T(p)$ f\"{u}r alle Stellen $\p\in S_0$ die maximale $p$-Erweiterung $k_\p(p)$ von $k_\p$ realisiert. Es sei $\p\in S_0$. Dann liegt $\p$ nicht \"{u}ber~$p$ und der lokale K\"{o}rper $k_\p$ enth\"{a}lt eine primitive $p$-te Einheitswurzel. Die Zerlegungsgruppe $Z_\p(k_{S\cup S_0}^T(p)|k)$ hat als Untergruppe von $G(k_{S\cup S_0}^T(p)|k)$  kohomologische Dimension kleiner gleich~$2$.  Geht man in \cite{NSW}, Theorem~7.5.2, zur maximalen Pro-$p$-Faktorgruppe \"{u}ber, so sieht man, dass die volle lokale Gruppe $\Gal(k_\p(p)|k_\p)$ als Pro-$p$-Gruppe durch zwei Erzeuger $\sigma, \tau$ mit der einen Relation $\sigma\tau\sigma^{-1}=\tau^q$ dargestellt werden kann. Das Element $\tau$ ist ein Erzeuger der Tr\"{a}gheitsgruppe,  $\sigma$ ist eine Hebung des Frobeniusautomorphismus und  $q=N(\p)$. Daher hat $\Gal(k_\p(p)|k_\p)$ genau drei Faktorgruppen von kohomologischer Dimension kleiner gleich $2$:  sich selbst, die triviale Gruppe und die Galoisgruppe der maximal unverzweigten $p$-Erweiterung von~$k_\p$. Da nun $\p$ in der  Erweiterung $k_{S\cup S_0}^T(p)$ verzweigt, ist die Zerlegungsgruppe voll, d.h.\  $k_{S\cup S_0}^T(p)$ realisiert die maximale $p$-Erweiterung $k_\p(p)|k_\p$. Schlie{\ss}lich ist $S_0$ nicht leer und f\"{u}r $\p\in S_0$ hat die Zerlegungsgruppe $Z_\p(k_{S\cup S_0}^T(p)|k)$ kohomologische Dimension~$2$, weshalb dies auch f\"{u}r $G(k_{S\cup S_0}^T(p)|k)$ gilt.
Mit $V_{S_0}^{S\cup T\cup T_0}(k)$ verschwinden auch die Gruppen $V_{S_0}^{S\cup T}(k)$ und $V_{S\cup S_0}^{T}(k)$. Daher sind die Folgen
\[
0\to H_\et^2(X\sm S_0, S\cup T) \to H_\et^2(X\sm (S\cup S_0),  T)\to \bigoplus_{\p\in S} H_\et^2(k_\p)\to 0,
\]
\[
0\to H_\et^1(X\sm S_0, S\cup T) \to H_\et^1(X\sm (S\cup S_0),  T)\to \bigoplus_{\p\in S} H_\et^1(k_\p)\to 0
\]
exakt. Desweiteren ist das Cup-Produkt
\[
H_\et^1(X\sm S_0, S\cup T) \otimes H_\et^1(X\sm S_0, S\cup T) \lang H_\et^2(X\sm S_0, S\cup T)
\]
surjektiv. Dies folgt aus der Wahl von $S_0$ und da der Homomorphismus
\[
H_\et^2(X\sm S_0, S\cup T\cup T_0) \lang H_\et^2(X\sm S_0, S\cup T).
\]
surjektiv ist. Schlie{\ss}lich sind die lokalen Cup-Pro\-duk\-te $H_\et^1(k_\p)\otimes H_\et^1(k_\p)\to H_\et^2(k_\p)$ stets surjektiv und die Inflationshomomorphismen $H^i(G(k_\p(p)|k_\p))\to H^i(k_\p)$ Isomorphismen f\"{u}r alle $i$ (siehe \cite{NSW}, 7.5.8). Da $k_{S\cup S_0}^T(p)$ bei den Stellen in $S$ die maximale lokale $p$-Erweiterung realisiert, erh\"{a}lt man hieraus die Surjektivit\"{a}t des Cup-Produkts
\[
H_\et^1(X\sm (S\cup S_0),  T) \otimes H_\et^1(X\sm (S\cup S_0), T) \lang H_\et^2(X\sm (S\cup S_0), T)
\]
Dies beendet den Beweis von Theorem~\ref{haupt}.

\section{Erweiterung der Stellenmenge} \label{erwsec}

\begin{proposition} \label{enlarge} Es seien $T$ und $S'$ disjunkte endliche Stellenmengen des globalen K\"{o}rpers~$k$, $S\subset S'$  eine Teilmenge und $p\neq \text{char}(k)$ eine Primzahl. Im Zahlk\"{o}rperfall sei $p\neq 2$ oder $k$ total imagin\"{a}r. Die markierte arithmetische Kurve $(X\sm S,T)$ habe die $K(\pi,1)$-Eigenschaft f\"{u}r $p$. Zerf\"{a}llt keine Stelle  $\p\in S'\sm S$ vollst\"{a}ndig in der Erweiterung $k_S^T(p)|k$, so gilt folgendes:

\medskip
\begin{compactitem}
\item[{\rm (i)}] Auch $(X\sm S',T)$ hat die $K(\pi,1)$-Eigenschaft f\"{u}r $p$.\smallskip
\item[{\rm (ii)}] $k_{S'}^T(p)_\p=k_\p(p)$ f\"{u}r alle $\p \in S'\sm S$.
\end{compactitem}

\medskip\noindent
Desweiteren gilt die arithmetische Form des Riemannschen Existenzsatzes, d.h.\ f\"{u}r  $K=k_S^T(p)$ ist der nat\"{u}rliche Homomorphismus
\[
\freeproductmed_{{\mathfrak p} \in S'\backslash S(K)} \Gal(K_\p(p)|K_\p) \longrightarrow \Gal(k_{S'}^T(p)|K)
\]
ein Isomorphismus. Insbesondere ist $\Gal(k_{S'}^T(p)|k_S^T(p))$ eine freie Pro-$p$-Gruppe.
\end{proposition}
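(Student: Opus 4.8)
The plan is to follow the pattern of Satz~\ref{erweiterung}, comparing $(X\sm S',T)$ with $(X\sm S,T)$ after base change to $K:=k_S^T(p)$. Set $S'':=S'\sm S$ and assume $S''\neq\varnothing$ (otherwise there is nothing to prove). The only way the $K(\pi,1)$‑hypothesis on $(X\sm S,T)$ will be used is the following chain: by Lemma~\ref{kpi1lem} and Satz~\ref{globalchi} the group $G_S^T(k)(p)$ has finite cohomological dimension, hence is torsion‑free, hence (being nontrivial, since by hypothesis no $\p\in S''$ splits completely in $k_S^T(p)|k$ while $S''\neq\varnothing$) is infinite; and for $\p\in S''$ the extension $K|k$ is unramified at $\p$, so $Z_\p(K|k)$ is a nontrivial torsion‑free quotient of $\Z_p$, i.e.\ $Z_\p(K|k)=\Z_p$ and $K_\p=k_\p^\nr(p)$ for every $\p\in S''$. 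Two local facts are then needed, both reflecting that $\Gal(k_\p^\nr|k_\p^\nr(p))=\prod_{\ell\neq p}\Z_\ell$ has trivial pro‑$p$‑part: $H^1_\nr(K_\p,\F_p)=0$ (no unramified degree‑$p$ extension of $K_\p$) and $\cd K_\p\le 1$, hence $H^2(K_\p,\F_p)=0$; equivalently $\Gal(K_\p(p)|K_\p)=\Gal(k_\p(p)|k_\p^\nr(p))$ is free pro‑$p$.

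First I would compute $H^*_\et((X\sm S',T)_K)$. Since $(X\sm S,T)$ has the $K(\pi,1)$‑property, $H^i_\et((X\sm S,T)_K)=0$ for $i\ge1$ by Lemma~\ref{kpi1lem}. Plugging this into the excision sequence for $(X\sm S',T)_K=(X\sm(S\cup S''),T)_K$ against $(X\sm S,T)_K$, evaluating the local contributions by Satz~\ref{localcoh} (the marking $T$ is empty at the points of $S''$) and using the two local facts above, one gets $H^0_\et((X\sm S',T)_K)=\F_p$, a restriction isomorphism $H^1_\et((X\sm S',T)_K)\liso\ressum_{\p\in S''(K)}H^1(K_\p)$, and $H^i_\et((X\sm S',T)_K)=0$ for $i\ge2$. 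Now put $G:=\Gal(k_{S'}^T(p)|K)$; since $K\subset k_{S'}^T(p)$ and one checks that $k_{S'}^T(p)$ is the maximal pro‑$p$‑extension of $K$ unramified outside $S'(K)$ and split at $T(K)$, we have $G=\pi_1^\et((X\sm S',T)_K)(p)$ with universal pro‑$p$‑cover $\widetilde{(X\sm S',T)}(p)$. In the Hochschild--Serre spectral sequence $H^i(G,H^j_\et(\widetilde{(X\sm S',T)}(p)))\Rightarrow H^{i+j}_\et((X\sm S',T)_K)$ one has $E_2^{0,1}=H^1_\et(\widetilde{(X\sm S',T)}(p))^G=0$ by construction, and the abutment vanishes in degree $2$ by the computation just made; the single relevant differential then forces $H^2(G)=0$, so $G$ is free pro‑$p$. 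Consequently $H^i(G,-)=0$ for $i\ge2$, all differentials in the spectral sequence vanish, and comparison with the abutment in degrees $\ge2$ yields $H^i_\et(\widetilde{(X\sm S',T)}(p))^G=0$, whence $H^i_\et(\widetilde{(X\sm S',T)}(p))=0$ for all $i\ge1$ (a discrete $p$‑torsion module over a pro‑$p$‑group with trivial invariants is trivial). By Lemma~\ref{kpi1lem} this is assertion~(i).

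It remains to identify $G$ with the free product and to deduce~(ii). The composite surjections $\Gal(K_\p(p)|K_\p)\twoheadrightarrow Z_\p(k_{S'}^T(p)|K)\hookrightarrow G$ assemble into a homomorphism $\psi\colon\freeproductmed_{\p\in S''(K)}\Gal(K_\p(p)|K_\p)\lang G$ via the construction of free products of bundles of pro‑$p$‑groups over the profinite space $S''(K)$ (\cite{NSW}, Ch.~IV). By \cite{NSW}, Theorem~4.3.14, together with \cite{NSW}, 7.5.8, the cohomology of the source is $\ressum_{\p\in S''(K)}H^i(K_\p)$ for $i\ge1$, which equals $\ressum_{\p}H^1(K_\p)$ for $i=1$ and $0$ for $i\ge2$. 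Hence $\psi^*$ is an isomorphism on $H^2$ (both sides zero), and on $H^1$ it is exactly the restriction isomorphism $H^1(G)=H^1_\et((X\sm S',T)_K)\liso\ressum_{\p}H^1(K_\p)$ from the second paragraph (this is where $H^1_\nr(K_\p)=0$ enters). By \cite{NSW}, Proposition~1.6.15, $\psi$ is an isomorphism; in particular $\Gal(k_{S'}^T(p)|k_S^T(p))=G$ is free pro‑$p$. Finally, $\psi$ being an isomorphism makes each canonical map $\Gal(K_\p(p)|K_\p)\to Z_\p(k_{S'}^T(p)|K)$ injective, hence bijective, so $k_{S'}^T(p)_\p=K_\p(p)=k_\p(p)$ (the last equality since $k_\p^\nr(p)=K_\p\subseteq k_\p(p)$ forces $K_\p(p)$ to be a pro‑$p$‑extension of $k_\p$), which is assertion~(ii).

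The step I expect to be the crux is the first paragraph: everything hinges on the fact that the $K(\pi,1)$‑property for $(X\sm S,T)$ makes $G_S^T(k)(p)$ torsion‑free, for only then is $K_\p=k_\p^\nr(p)$ and $H^1_\nr(K_\p)=0$, which is what allows $H^1$ of $G$ to be matched with $H^1$ of the free product of the \emph{full} local groups $\Gal(K_\p(p)|K_\p)$. Once that is in place the argument is bookkeeping with excision, the generalized direct sums $\ressum$, the Hochschild--Serre spectral sequence and the criterion \cite{NSW}, 1.6.15, closely parallel to the proof of Satz~\ref{erweiterung}.
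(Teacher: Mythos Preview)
Your argument is correct and follows essentially the same route as the paper: deduce $K_\p=k_\p^\nr(p)$ from finite cohomological dimension, compute $H^*_\et((X\sm S',T)_K)$ by excision, use Hochschild--Serre to get freeness of $G=\Gal(k_{S'}^T(p)|K)$, and then invoke \cite{NSW}, 1.6.15, for the free-product isomorphism. The only cosmetic differences are that the paper establishes the free-product isomorphism before~(i) and deduces the $K(\pi,1)$-property from the single vanishing $H^2_\et((X\sm S',T)_{k_{S'}^T(p)})=0$ (the higher degrees being covered by Satz~\ref{globcoh} since $S'\neq\varnothing$), whereas you run the spectral-sequence induction over all degrees; and your justification ``$\cd K_\p\le1$'' for $H^2(K_\p,\F_p)=0$ is a little loose (what you need and use is $\cd_p$, or equivalently the colimit of local $H^2$'s along the infinite $p$-extension $K_\p|k_\p$).
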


\begin{remark}
Ist $k_S^T(p)|k$ unendlich, so hat die Menge der voll zerfallenden Stellen die Dirichletdichte Null. Diese Aussage kann noch im Stil von \cite{Ih} versch\"{a}rft werden, siehe \cite{TV}, Proposition~3.1. Es stellt sich die Frage, ob diese Menge endlich oder sogar gleich $T$ ist. Dies ist unmittelbar klar im Fall $S\supset S_p$, $T=\varnothing$, weil dann die zyklotomische $\Z_p$-Erweiterung von $k$ in $k_S^T(p)$ enthalten ist.
\end{remark}

\begin{proof}[Beweis von Satz~\ref{enlarge}] Die $K(\pi,1)$-Eigenschaft impliziert
\[
H^i(G_S^T(k)(p),\F_p) \cong H^i_\et(X\sm S,T,\F_p)=0 \text{ f\"{u}r } i\geq 4,
\]
insbesondere gilt $\cd\, G_S^T(k)(p)\leq 3$. Sei $\p \in S'\sm S$ und $K=k_S^T(p)$. Nach Annahme zerlegt sich $\p$ nicht vollst\"{a}ndig in $K|k$.   Wegen  $\cd\, G_S^T(k)(p)< \infty$ ist die Zerlegungsgruppe von $\p$ in $K|k$ eine nichttriviale und torsionsfreie Faktorgruppe von  $\Z_p\cong \Gal(k_\p^{\nr}(p)|k_\p)$. Daher gilt
\[
K_\p=k_\p^{\nr}(p)
\]
f\"{u}r jedes $\p\in S'\sm S$. Wir betrachten die Ausschneidungsfolge f\"{u}r  $(X \sm S,T)_{K}$ und $(X \sm S',T)_{K}$. Da  $(X\sm S,T)$ die $K(\pi,1)$-Eigenschaft f\"{u}r  $p$ hat, erhalten wir $H^i_{\et}((X \sm S,T)_{K},\F_p)=0$ f\"{u}r $i \geq 1$.  Unter Auslassung der Koeffizienten $\F_p$ erhalten wir Isomorphismen
\[
H^i_{\et}\big((X \sm S',T)_K\big) \stackrel{\sim}{\longrightarrow} \ressum_{{\mathfrak p} \in S'\backslash S (K)} H^{i+1}_{\mathfrak p}\big((X \sm S,T)_{K}\big)
\]
f\"{u}r $i\geq 1$. Hieraus folgt
\[
H^i_{\et}\big((X\sm S',T)_{K}\big)=0\
\]
f\"{u}r $i\geq 2$. Nun ist $ (X\sm S',T)_{k_{S'}^T(p)}$ die universelle Pro-$p$-\"{U}berlagerung der markierten Kurve
$(X\sm S',T)_{K}$. Daher liefert die  Hochschild-Serre-Spektralfolge
\[
E_2^{ij}=H^i\big(\Gal(k_{S'}^T(p)|K), H^j_\et((X\sm S',T)_{k_{S'}^T(p)})\big) \Rightarrow H^{i+j}_\et((X\sm S',T)_{K})
\]
eine Inklusion
\[
H^2\big(\Gal(k_{S'}^T(p)|K)\big) \hookrightarrow H^2_{\et}\big((X\sm S',T)_{K}\big)=0.
\]
Folglich ist  $\Gal(k_{S'}^T(p)|K)$ eine freie Pro-$p$-Gruppe und wir haben einen Isomorphismus
\[
H^1(\Gal(k_{S'}^T(p)|K)) \stackrel{\sim}{\to} H^1_{\et}\big((X\sm S',T)_{K}\big)
\cong \ressum_{{\mathfrak p}\in S'\backslash S(K)} H^1(K_{\mathfrak p}).
\]
Wir betrachten nun den nat\"{u}rlichen Homomorphismus
\[
\phi: \freeproductmed_{{\mathfrak p} \in S'\backslash S(K)} \Gal(K_\p(p)|K_\p) \longrightarrow \Gal(k_{S'}^T(p)|K).
\]
Wegen $K_\p=k_\p^\nr(p)$ f\"{u}r $\p\in S'\sm S$ sind die Faktoren im freien Produkt auf der linken Seite freie Pro-$p$-Gruppen. Nach der Berechnung der Kohomologie eines freien Produktes (\cite{NSW}, 4.3.10 und 4.1.4) ist $\phi$ ein Homomorphismus zwischen freien Pro-$p$-Gruppen, der einen Isomorphismus auf $H^1(-,\F_p)$ induziert.  Daher ist $\phi$ ein Isomorphismus (siehe \cite{NSW}, 1.6.15).  Insbesondere gilt $k_{S'}^T(p)_\p=k_\p(p)$ f\"{u}r jedes $\p\in S'\sm S$.
Benutzt man nun die Freiheit von $\Gal(k_{S'}^T(p)|K)$, so liefert die Hochschild-Serre-Spektralfolge
einen Isomorphismus
\[
0=H^2_\et((X\sm S',T)_{K})\mapr{\sim} H^2_\et\big((X\sm S',T)_{k_{S'}(p)}\big)^{\Gal(k_{S'}^T(p)|K)}.
\]
Da $\Gal(k_{S'}(p)|k_S(p))$ eine Pro-$p$-Gruppe ist, folgt $H^2_\et\big((X\sm S',T)_{k_{S'}(p)}\big)=0$. Nach Lemma~\ref{kpi1lem} hat somit  $(X\sm S',T)$ die $K(\pi,1)$-Eigenschaft f\"{u}r $p$.
\end{proof}

\section{Dualit\"{a}t f\"{u}r die Fundamentalgruppe}\label{dualsec}
Zun\"{a}chst untersuchen wir den Zusammenhang zwischen der $K(\pi,1)$-Eigenschaft und den universellen Normen globaler Einheiten.

Wir beginnen damit, redundante Stellen aus $S$ zu entfernen: Ist $\p \nmid p$ eine Stelle mit $\zeta_p\notin k_\p$, dann ist jede $p$-Erweiterung des lokalen K\"{o}rpers~$k_\p$ unverzweigt (siehe \cite{NSW}, 7.5.9).  Daher k\"{o}nnen Stellen  $\p \notin  S_p$ mit $N(\p) \not \equiv 1 \bmod p$  in einer $p$-Erweiterung nicht verzweigen.
Durch Entfernung aller dieser redundanten Stellen aus $S$ erhalten wir eine Teilmenge  $S_{\min} \subset S$ mit $G_S^T(p)=G_{S_{\min}}^T(p)$. Im Fall $\delta=1$ gilt $S_\min=S$, d.h.\ es gibt keine redundanten Stellen.

\begin{lemma}
Es hat $(X\sm S,T)$ genau dann die $K(\pi,1)$-Eigenschaft f\"{u}r $p$, wenn dies f\"{u}r $(X\sm S_\min,T)$ der Fall ist.
\end{lemma}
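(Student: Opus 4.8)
The plan is to deduce the claim from the excision sequence relating the marked curves $X\sm S$ and $X\sm S_{\min}$, which differ only in the finitely many redundant closed points of $S\sm S_{\min}$, combined with the characterisation of the $K(\pi,1)$-property through the edge homomorphisms $\phi_{i,\F_p}$ of Lemma~\ref{kpi1lem}. The essential input, recalled just above, is that the redundant primes do not affect the fundamental pro-$p$ group: $G_S^T(p)=G_{S_{\min}}^T(p)$, hence $k_S^T(p)=k_{S_{\min}}^T(p)=:K$, and $X\sm S$ and $X\sm S_{\min}$ carry one and the same maximal pro-$p$ fundamental group $G$.

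First I would establish the relevant local vanishing. Let $\p\in S\sm S_{\min}$. By construction $\p\nmid p$ and $N(\p)\not\equiv 1\bmod p$; the latter forces $\mu_p\not\subset k_\p$, i.e.\ $\delta_\p=0$, while $[k_\p:\Q_p]=0$. Also $\p\notin T$ since $S\cap T=\varnothing$, so the marking is trivial near $\p$ and $T^h_\p=\varnothing$. Hence Satz~\ref{localcoh}, applied to $X_\p$ with empty marking, gives $h^2_\p(X_\p,\varnothing)=\delta_\p+[k_\p:\Q_p]=0$ and $h^3_\p(X_\p,\varnothing)=\delta_\p=0$, so $H^i_\p(X_\p,\varnothing)=0$ for all $i$. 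Since $X\sm S$ is the open complement of the closed subset $S\sm S_{\min}$ inside $X\sm S_{\min}$, the excision sequence for marked curves (Abschnitt~\ref{mark-sec}) together with this vanishing shows that restriction along the open immersion is an isomorphism
\[
H^i_\et(X\sm S_{\min},T,\F_p)\liso H^i_\et(X\sm S,T,\F_p)\qquad(i\geq 0).
\]

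Finally I would check compatibility with the edge maps. The open immersion $X\sm S\hookrightarrow X\sm S_{\min}$ lifts to a $G$-equivariant open immersion of universal pro-$p$ coverings $(X\sm S,T)_K\hookrightarrow (X\sm S_{\min},T)_K$, so the two Hochschild-Serre spectral sequences are compatible and the composite $H^i(G,\F_p)\to H^i_\et(X\sm S_{\min},T,\F_p)\liso H^i_\et(X\sm S,T,\F_p)$ of the edge homomorphism for $(X\sm S_{\min},T)$ with the excision isomorphism equals the edge homomorphism for $(X\sm S,T)$. Therefore $\phi_{i,\F_p}$ is bijective for all $i$ for $(X\sm S,T)$ if and only if this holds for $(X\sm S_{\min},T)$, and by the equivalence (i)$\Leftrightarrow$(ii) of Lemma~\ref{kpi1lem} this is precisely the asserted equivalence of $K(\pi,1)$-properties. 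I do not anticipate a serious obstacle; the only points requiring care are the local computation in the second step — where one really needs all of $\p\nmid p$, $\p\notin T$ and $N(\p)\not\equiv 1\bmod p$ — and the bookkeeping that the map of Hochschild-Serre spectral sequences is compatible with the edge homomorphisms, both of which are routine.
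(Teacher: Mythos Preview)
Your argument is correct and follows the same excision-plus-local-vanishing strategy as the paper. The only difference is in which of the equivalent criteria of Lemma~\ref{kpi1lem} is invoked: the paper passes directly to the common universal pro-$p$ cover $K=k_S^T(p)=k_{S_\min}^T(p)$, applies the local vanishing of Satz~\ref{localcoh} there (noting that for a prime $\P$ of $K$ above some $\p\in S\sm S_{\min}$ the unramified $p$-power-degree residue extension still has $N(\P)\equiv N(\p)\not\equiv 1\bmod p$), and concludes via criterion~(iii) that $H^i_\et((X\sm S,T)_K)=0$ iff $H^i_\et((X\sm S_{\min},T)_K)=0$. You instead stay at the base level, use criterion~(ii), and must therefore supply the compatibility of the edge maps with the excision isomorphism; this extra bookkeeping is routine, exactly as you say, and the paper's route simply bypasses it.
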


\begin{proof}
Es gilt $k_S^T(p)=k_{S_\min}^T(p)$. Bezeichnen wir diesen K\"{o}rper mit $K$, so gilt nach Satz~\ref{localcoh}
\[
H^i_\p\big((X,T)_K,\F_p\big)=0
\]
f\"{u}r $i\geq 1$ und jedes $\p \in S\sm S_\min(K)$. Die Ausschneidungsfolge zeigt nun, dass f\"{u}r $i\geq 1$ die Gruppe
$H^i_\et((X\sm S,T)_K,\F_p)$ genau dann verschwindet, wenn  dies f\"{u}r $H^i_\et((X\sm S_\min,T)_K,\F_p)$ der Fall ist. Die Aussage folgt daher aus Lemma~\ref{kpi1lem}.
\end{proof}

\begin{proposition} \label{thmb} Es seien $S$ und $T$ disjunkte endliche Stellenmengen des globalen K\"{o}rpers $k$ und $p\neq \text{char}(k)$ eine Primzahl. Im Zahlk\"{o}rperfall sei $p\neq 2$ oder $k$ total imagin\"{a}r.  Dann implizieren je zwei der folgenden Bedingungen  {\rm (a)--(c)} die jeweils dritte.

\medskip\noindent
\begin{compactitem}
\item[\rm (a)] $(X\sm S,T)$ hat die $K(\pi,1)$-Eigenschaft f\"{u}r $p$. \smallskip
\item[\rm (b)] $\varprojlim_{K\subset k_S^T(p)} E_{K,T} \otimes \Z_p=0$. \smallskip
\item[\rm (c)] $k_{S}^T(p)_\p=k_\p(p)$ f\"{u}r alle $\p\in S_\min$.
\end{compactitem}

\medskip\noindent
In {\rm (b)} erstreckt sich der Limes \"{u}ber alle endlichen Teilerweiterungen $K|k$ von $k_S^T(p)|k$.
Gelten {\rm (a)--(c)}, so gilt auch
\[
\varprojlim_{K\subset k_S^T(p)} E_{K,S_\min\cup T} \otimes \Z_p=0.
\]
\end{proposition}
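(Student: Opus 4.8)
The plan is to pass to the tower and express everything in terms of the universal pro-$p$ cover. Write $K:=k_S^T(p)$, $G:=G_S^T(k)(p)=\Gal(K|k)$, let $\widetilde Y$ denote the pro-object $(X\sm S,T)_K$, and put $M:=H^2_\et(\widetilde Y,\F_p)$. First I would invoke the Lemma preceding this Proposition to reduce to the case $S=S_\min$: neither (a) nor the field $K$ nor the groups $E_{K',T}$, $E_{K',S_\min\cup T}$ is affected. Next, Theorem~\ref{globcoh} together with the fact that étale cohomology commutes with filtered colimits gives $H^i_\et(\widetilde Y,\F_p)=0$ for $i\ge 4$, while $H^1_\et(\widetilde Y,\F_p)=0$ by construction; and when $S\neq\varnothing$, Corollary~\ref{ausschneid2} and $h^3(X\sm S,T)=0$ force $H^3_\et(\widetilde Y,\F_p)=0$ as well. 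The case $S=\varnothing$ is degenerate — then (c) is vacuous; if $k_\varnothing^T(p)|k$ is infinite one still gets $H^3_\et(\widetilde Y,\F_p)=0$ because the transition maps on $H^3_\et(X_{K'},\F_p)\cong\mu_p(K')^\vee$ are dual to the corestrictions $\mu_p\to\mu_p$, i.e.\ multiplication by the degree, hence vanish, while if it is finite then $G$ is a nontrivial finite $p$-group and both (a) and (generically) (b) fail, so the assertion is vacuous or settled by a direct count with Theorem~\ref{globcoh}; I would dispose of this at the outset. With this, the Hochschild--Serre spectral sequence for $\widetilde Y\to(X\sm S,T)$ has only the rows $j=0$ and $j=2$, and unwinding it yields $0\to H^2(G,\F_p)\stackrel{\phi_2}{\to}H^2_\et(X\sm S,T,\F_p)\to M^G\to 0$ with $M^G=0\iff M=0$ ($G$ being pro-$p$); thus statement (a) is equivalent to $M=0$, which is just Corollary~\ref{kpi1crit} made explicit.

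The heart is a duality computation identifying $M$. I would apply the duality theorem for marked curves (Theorem~\ref{bstdual}) and, behind it, Poitou--Tate, at every finite layer $k\subseteq K'\subseteq K$ and then pass to the colimit over $K'$. Concretely one combines: the excision sequence relating $(X\sm S,T)_{K'}$ to $(X,T)_{K'}$, whose local terms at $\p\in S$ are by Proposition~\ref{localcoh} equal to $H^1_{/\nr}(K'_\p)$ in degree $2$ and $H^2(K'_\p)$ in degree $3$; the identification $\Sha^2(K',S,T)\cong V_S^T(K')^\vee$; and the Kummer sequence $0\to E_{K',T}/p\to V_\varnothing^T(K')\to{}_p\Cl_T(K')\to 0$ of Lemma~\ref{VSchange} with the inclusions $V_S^T(K')\subseteq V_\varnothing^T(K')$. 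Assembling these, keeping careful track of the finitely many boundary contributions (from $\mu_p(k)$ and $H^3_\et(X,T)$) with the Euler--Poincaré formula of Theorem~\ref{globcoh} and Corollary~\ref{h2lokali}, and of the behaviour of ${}_p\Cl_T(K')$ in the limit, I expect an exact sequence of the shape
\[
0\longrightarrow \Big(\varprojlim_{K'} E_{K',T}\otimes\Z_p\Big)^{\!\vee}\!\Big/(\text{finite, explicit}) \longrightarrow M \longrightarrow \ressum_{\p\in S(K)} W_\p \longrightarrow 0,
\]
where $W_\p$ is a local term whose vanishing for all $\p$ is equivalent to $K_\p=k_\p(p)$. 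For $\p\in S=S_\min$ (so $\p\nmid p$, $\mu_p\subset k_\p$) the group $\Gal(k_\p(p)|k_\p)$ is the one-relator pro-$p$ group $\langle\sigma,\tau\mid\sigma\tau\sigma^{-1}=\tau^{N(\p)}\rangle$, with exactly three quotients of $\cd\le 2$: itself, the trivial group, and $\Gal(k_\p^{\nr}(p)|k_\p)\cong\Z_p$; this is precisely what lets one recognise $k_\p(p)$ among the local completions of $K$, and it forces $W_\p$ to be the appropriate relative term (e.g.\ $H^1_{/\nr}(K_\p,\F_p)$) rather than $H^2(K_\p,\F_p)$, the latter already vanishing once $K_\p|k_\p$ is merely infinite.

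Granting such a sequence, the three-way equivalence is immediate together with "(a)$\iff M=0$": (b) kills the left term and (c) kills the right term, so (b)$\wedge$(c)$\Rightarrow M=0\Rightarrow$(a); conversely $M=0$ makes both outer terms vanish, so (a) implies (b) and (c), hence in particular (a)$\wedge$(b)$\Rightarrow$(c) and (a)$\wedge$(c)$\Rightarrow$(b). For the supplementary assertion, assume (a)--(c) and use the tautological exact sequence $0\to E_{K',T}\to E_{K',S_\min\cup T}\xrightarrow{(v_\p)}\bigoplus_{\p\in S_\min(K')}\Z$; tensoring with the flat $\Z$-algebra $\Z_p$ and applying $\varprojlim_{K'}$, the first term vanishes by (b), so $\varprojlim E_{K',S_\min\cup T}\otimes\Z_p$ embeds into $\varprojlim\big(\bigoplus_{\p\in S_\min(K')}\Z_p\big)$; there the transition maps are the "sum over primes above", weighted by ramification indices and residue degrees, which by (c) are unbounded along the tower (indeed divisible by arbitrarily high powers of $p$, as $K_\p=k_\p(p)$ is both infinitely ramified and of infinite residue degree over $k_\p$), whence the limit is $0$.

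The main obstacle is clearly the middle step: passing Theorem~\ref{bstdual} and the nine-term Poitou--Tate sequence correctly to the (generally infinite) limit over $k_S^T(p)|k$, controlling the class-group contribution $\varprojlim_{K'}{}_p\Cl_T(K')$ there, and — above all — pinning down the correct local factor $W_\p$ so that its vanishing genuinely detects $K_\p=k_\p(p)$ and not merely $K_\p\supseteq k_\p^{\nr}(p)$. Everything else is bookkeeping with the exact sequences and Euler characteristics already established in Sections~3 and~4.
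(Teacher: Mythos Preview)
Your plan has a genuine structural gap: the short exact sequence you are hoping for,
\[
0\longrightarrow \Big(\varprojlim_{K'} E_{K',T}\otimes\Z_p\Big)^{\!\vee}\!\big/(\text{finite}) \longrightarrow M \longrightarrow \ressum_{\p\in S(K)} W_\p \longrightarrow 0
\]
with the left term vanishing iff (b) and the right term vanishing iff (c), \emph{cannot exist}. If it did, then $M=0$ would force both outer terms to vanish, i.e.\ (a) alone would imply (b)$\wedge$(c). But this is false: take $k=\Q(\zeta_p)$, $S=S_p$, $T=\varnothing$ with $p$ a regular prime. Then (a) holds (this is the classical case $S\supset S_p$ covered by Proposition~\ref{pinS}), while (c) fails, and hence --- by the very statement you are proving --- (b) fails as well. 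So your deduction ``$M=0$ makes both outer terms vanish, so (a) implies (b) and (c)'' collapses, and with it the argument for (a)$\wedge$(b)$\Rightarrow$(c) and (a)$\wedge$(c)$\Rightarrow$(b).

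What actually comes out of excision in the limit is the five-term sequence
\[
0 \to A \to B \to M \to C \to D \to 0,
\]
where $A=\varinjlim\bigoplus_{\p\in S}H^2_\p$, $B=\varinjlim H^2_\et((X,T)_{K'})$, $C=\varinjlim\bigoplus_{\p\in S}H^3_\p$, $D=\varinjlim H^3_\et((X,T)_{K'})$. The identification $(\ast)$ via Theorem~\ref{bstdual} and $\varprojlim{}_p\Cl_T(K')=0$ gives $B\cong(\varprojlim E_{K',T}/p)^\vee$, so (b)$'$ is $B=0$; and the local analysis shows (c) is equivalent to the vanishing of \emph{all} local terms, in particular $A=C=0$. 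From $M=0$ one only gets $A\cong B$ and $C\cong D$, i.e.\ (a) yields (b)$\Leftrightarrow$(c), not (b)$\wedge$(c). The paper runs exactly this argument: under (a) it reads off (b)$'\!\Leftrightarrow$(c)$''$ from the collapsed excision sequence; under (b)$\wedge$(c) it uses $A=B=C=0$ to force $M=0$. You should reorganise your proof along these lines rather than trying to split $M$ into a two-step filtration governed separately by (b) and (c).

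Two smaller points. First, $S_\min$ is obtained by deleting only the primes $\p\notin S_p$ with $\zeta_p\notin k_\p$; it still contains $S\cap S_p$, so your parenthetical ``$\p\in S_\min$ (so $\p\nmid p$)'' is wrong, and your one-relator description of the local group does not apply at those primes. Second, your argument for the supplementary statement is essentially right; the paper packages it via the exact sequence involving $K_\p^\times/U_\p$ and local class field theory, which is cleaner than tracking $e\cdot f$ by hand, but the content is the same.
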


\begin{remarks} 1. Theorem~\ref{haupt} besagt, dass Bedingungen (a)--(c) nach Hinzunahme endlich vieler Stellen zu $S$ gelten.

\noindent
2. Es gelte $\zeta_p \in k$, $S_p\subset S$ und $T=\varnothing$. Dann gilt (a) und Bedingung (c) gilt f\"{u}r  $p>2$, falls $\# S > r_2+2$ (siehe \cite{NSW} 10.9.1 und Remark~2 nach 10.9.3). Im Fall  $k=\Q(\zeta_p)$, $S=S_p$, $T=\varnothing$, gilt Bedingung (c) genau dann, wenn  $p$ eine irregul\"{a}re Primzahl ist.
\end{remarks}

\begin{proof}[Beweis von Satz~\ref{thmb}] Wir k\"{o}nnen ohne Einschr\"{a}nkung  $S=S_\min$ annehmen. Wendet man das  topologische Nakayama-Lemma (\cite{NSW}, 5.2.18) auf den kompakten $\Z_p$-Modul $\varprojlim E_{K,T} \otimes \Z_p$ an, so sieht man, dass Bedingung (b) zur nachfolgenden Bedingung (b') \"{a}quivalent ist:

\medskip
\begin{compactitem}
\item[\rm (b)']\quad  $\varprojlim_{K\subset k_S^T(p)} E_{K,T}/p=0$.
\end{compactitem}

\medskip\noindent
Desweiteren ist nach Lemma~\ref{kpi1lem} Bedingung (a) \"{a}quivalent zu

\medskip
\begin{compactitem}
\item[\rm (a)'] \quad $\varinjlim_{K\subset k_S^T(p)} H^i_\et((X\sm S, T)_K, \F_p)=0$ f\"{u}r $i\geq 1$.
\end{compactitem}

\medskip\noindent
Nach Satz~\ref{globcoh} gilt (a)' f\"{u}r $i=1$, $i\geq 4$, und auch f\"{u}r $i=3$,  falls  $S$ nichtleer oder $\delta=0$ ist. Im Fall $S=\varnothing$, $\delta=1$, gilt $H^3_\et((X\sm S,T)_K,\F_p)\cong \mu_p^\vee$. Daher ist in diesem Fall Bedingung (a)' f\"{u}r $i=3$ genau dann erf\"{u}llt, wenn die Gruppe $G_S^T(k)(p)$ von unendlicher Ordnung ist.

Nach Lemma~\ref{VSchange} haben wir f\"{u}r jedes $K\subset k_S^T(p)$ die exakte Folge
\[
0\longrightarrow E_{K,T} /p \longrightarrow V_\varnothing^T(K) \longrightarrow \null_p \Cl_T(K) \longrightarrow 0\,.
\]
Der K\"{o}rper $k_S^T(p)$ hat keine unverzweigten $p$-Erweiterungen in denen alle Stellen aus $T$ vollst\"{a}ndig zerfallen. Daher erhalten wir nach Klassenk\"{o}rpertheorie
\[
\varprojlim_{K\subset k_S^T(p)} \null_p\Cl_T(K) \subset \varprojlim_{K\subset k_S^T(p)} \Cl_T(K)\otimes \Z_p=0.
\]
Der Dualit\"{a}tssatz \ref{bstdual} liefert uns daher einen Isomorphismus
\[
\varinjlim_{K\subset k_S^T(p)} H^2_\et\big((X, T)_K, \F_p\big)=\varinjlim_{K\subset k_S^T(p)} \Sha^2(K,\varnothing, T) \cong \big(\varprojlim_{K\subset k_S^T(p)} E_{K,T}/p \ \big)^\vee. \leqno (*)
\]
Wir zeigen zun\"{a}chst, dass im Fall $S=\varnothing$ (in dem (c) trivialerweise gilt) die Bedingungen (a) und (b) \"{a}quivalent sind. Gilt (a)', so folgt mit $(\ast)$ die G\"{u}ltigkeit von (b)'. Gilt (b), so folgt insbesondere, dass $\delta=0$ gilt oder $G_S^T(k)(p)$ unendliche Ordnung hat. Somit erhalten wir (a)' f\"{u}r $i=3$. Au{\ss}erdem folgt (a)' f\"{u}r $i=2$ mit Hilfe von $(\ast)$ aus (b)'. Dies beendet den Beweis im Fall $S=\varnothing$.

\medskip
Von nun an nehmen wir $S\neq \varnothing$ an.
F\"{u}r $\p\in S=S_\min$ besitzt jede echte Galoissche Teilerweiterung von $k_\p(p)|k_\p$ verzweigte $p$-Erweiterungen. Nach der Berechnung der lokalen Kohomologie in Satz~\ref{localcoh} ist daher (c) \"{a}quivalent zu

\medskip
\begin{compactitem}
\item[\rm (c)'] \quad $\varinjlim_{K\subset k_S^T(p)} \bigoplus_{\p \in S(K)} H^i_\p((X,T)_K, \F_p)=0$ f\"{u}r alle $i$,
\end{compactitem}

\medskip\noindent
und auch zu
\medskip
\begin{compactitem}
\item[\rm (c)''] \quad $\varinjlim_{K\subset k_S^T(p)} \bigoplus_{\p \in S(K)} H^2_\p((X,T)_K, \F_p)=0$.
\end{compactitem}

\medskip\noindent
Nun betrachten wir den direkten Limes \"{u}ber $K\subset k_S^T(p)$ der Ausschneidungsfolgen (Koeffizienten $\F_p)$
\[
\cdots \to  \bigoplus_{\p\in S(K)} H^i_\p\big((X,T)_K\big) \to H^i_\et\big((X,T)_K\big) \to H^i_\et\big((X\sm S,T)_K\big) \to \cdots \leqno (**)
\]
Gilt (a)', so verschwinden die rechten Terme in $(**)$ f\"{u}r $i\geq 1$ im Limes und
$(\ast)$ zeigt dann die \"{A}quivalenz zwischen (b)' und (c)''.

Nun m\"{o}gen (b) und (c) gelten. Wie oben impliziert (b) das Verschwinden des mittleren Terms f\"{u}r  $i=2$ im Limes der Folge $(**)$. Bedingung (c)' zeigt dann (a)'. Damit haben wir gezeigt, dass je zwei der Bedingungen (a)--(c) die jeweils dritte implizieren.

\medskip
Schlie{\ss}lich m\"{o}gen (a)--(c) gelten. Tensorieren wir f\"{u}r $K\subset k_S^T(p)$ die exakte Folge (siehe \cite{NSW}, 10.3.12)
\[
0 \to E_{K,T} \to E_{K,S\cup T} \to \bigoplus_{\p\in S(K)} (K_\p^\times/U_\p) \to \Cl_T(K) \to \Cl_{S\cup T}(K) \to 0
\]
mit (der flachen $\Z$-Algebra) $\Z_p$, so erhalten wir eine exakte Folge endlich erzeugter, und daher kompakter, $\Z_p$-Moduln. Geht man nun zum projektiven Limes \"{u}ber alle endlichen Teilerweiterungen $K$ von $k_S^T(p)|k$ \"{u}ber und benutzt $\varprojlim \Cl_T(K) \otimes \Z_p =0$, so erh\"{a}lt man die exakte Folge
\[
0 \to \!\!\! \varprojlim_{K\subset k_S^T(p)}\!\!\! E_{K,T} \otimes \Z_p \to \!\!\!\varprojlim_{K\subset k_S^T(p)}\!\! \! E_{K,S\cup T} \otimes \Z_p \to \!\! \varprojlim_{K\subset k_S^T(p)} \bigoplus_{\p\in S(K)} (K_\p^\times /U_\p) \otimes \Z_p \to 0.
\]
Bedingung (c) und lokale Klassenk\"{o}rpertheorie implizieren das Verschwinden des rechten Limes. Daher impliziert
(b) das Verschwinden des projektiven Limes in der Mitte.
\end{proof}

Gilt $G_S^T(k)(p)\neq 1$ und ist Bedingung (a) aus Satz~\ref{thmb} erf\"{u}llt, dann kann Bedingung (c) nur an Primteilern von $p$ scheitern. Dies folgt aus dem n\"{a}chsten

\begin{proposition} \label{fulllocal} Es seien $S$ und $T$ disjunkte endliche Stellenmengen des globalen K\"{o}rpers $k$ und $p\neq \text{char}(k)$ eine Primzahl. Im Zahlk\"{o}rperfall sei $p\neq 2$ oder $k$ total imagin\"{a}r. Im Funktionenk\"{o}rperfall gelte
\[
\null_p \Cl(k)\neq 0 \ \text{ oder }\ \delta=0\ \text{ oder }\ T\neq\varnothing\ \text{ oder }\ \# S\geq 2.
\]
Hat $(X\sm S,T)$ die $K(\pi,1)$-Eigenschaft f\"{u}r $p$ und gilt $G_S^T(k)(p)\neq 1$, dann hat jede Stelle  $\p\in S$ mit $\zeta_p\in k_\p$ eine unendliche Tr\"{a}gheitsgruppe in $G_S^T(k)(p)$. Desweiteren gilt
\[
k_S^T(p)_\p=k_\p(p)
\]
f\"{u}r jedes  $\p\in S_\min \sm S_p$.
\end{proposition}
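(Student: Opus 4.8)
The plan is to derive the last sentence from the first by a purely local argument and to prove the first statement by contradiction. Put $G=G_S^T(k)(p)$ and $K=k_S^T(p)$. Since $(X\sm S,T)$ has the $K(\pi,1)$-property and (both assertions being vacuous otherwise) $S\neq\varnothing$, Satz~\ref{globcoh} together with Lemma~\ref{kpi1lem} gives $\cd\, G\leq 2$; hence $G$ is torsion-free, and as $G\neq 1$ it is infinite, so $K|k$ is infinite. For the last sentence, if $\p\in S_\min\sm S_p$ then $\p\nmid p$ and $N(\p)\equiv 1\bmod p$ (otherwise $\p$ were redundant), so $\Gal(k_\p(p)|k_\p)$ is the pro-$p$-group on generators $\sigma,\tau$ with the single relation $\sigma\tau\sigma^{-1}=\tau^q$, $q=N(\p)$ (\cite{NSW}, 7.5.2), in which $\overline{\langle\tau\rangle}\cong\Z_p$ is the inertia group, normal with procyclic quotient. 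The decomposition group $Z_\p\subseteq G$ is a torsion-free quotient of this group in which, by the first assertion, the image of $\overline{\langle\tau\rangle}$ is infinite; thus the kernel $N$ meets $\overline{\langle\tau\rangle}$ trivially and embeds into $\Gal(k_\p(p)|k_\p)/\overline{\langle\tau\rangle}\cong\Z_p$, and conjugating a topological generator of $N$ by $\tau$ and using $q^{p^j}\neq 1$ in $\Z_p$ forces $N=0$. Hence $Z_\p=\Gal(k_\p(p)|k_\p)$, i.e. $k_S^T(p)_\p=k_\p(p)$.

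For the first assertion, assume some $\p_0\in S$ with $\zeta_p\in k_{\p_0}$ had finite inertia group in $G$. By torsion-freeness this group is trivial, so $\p_0$ is unramified in $K|k$; therefore $K=k_{S_1}^T(p)$ with $S_1:=S\sm\{\p_0\}$ and $G=G_{S_1}^T(k)(p)$, and the decomposition group $Z_{\p_0}$ is a torsion-free quotient of $\Gal(k_{\p_0}^\nr(p)|k_{\p_0})\cong\Z_p$, i.e. $Z_{\p_0}=1$ or $Z_{\p_0}\cong\Z_p$. Both will be excluded by comparing, over $K$, the Ausschneidungsfolgen for $(X\sm S_1,T)$ and $(X\sm S,T)$: since $(X\sm S,T)$ has the $K(\pi,1)$-property, $H^i_\et((X\sm S,T)_K)=0$ for $i\geq 1$ (Lemma~\ref{kpi1lem}), whence $H^i_\et((X\sm S_1,T)_K)\cong\bigoplus_{\P\mid\p_0}H^i_\P((X\sm S_1,T)_K)$ for $i\geq 2$, while $H^1_\et((X\sm S_1,T)_K)=0$ because $(X\sm S_1,T)_K$ is the universal Pro-$p$-\"Uberlagerung of $(X\sm S_1,T)$.

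If $Z_{\p_0}=1$, then $\p_0$ splits completely in $K$ and, by Satz~\ref{localcoh}, $\bigoplus_{\P\mid\p_0}H^3_\P((X\sm S_1,T)_K)=\ressum_{\P\in\{\p_0\}(K)}H^2(k_{\p_0})$, which is nonzero since $H^2(k_{\p_0})\cong\mu_p(k_{\p_0})^\vee\neq 0$; on the other hand $H^3_\et((X\sm S_1,T)_K)\cong\varinjlim_{K'}H^3_\et((X\sm S_1)_{K'})$ (Korollar~\ref{ausschneid2}) vanishes by Satz~\ref{globcoh} when $S_1\neq\varnothing$ and, when $S_1=\varnothing$, because the transition maps are the zero maps $\mu_p(K')^\vee\to\mu_p(K'')^\vee$ and $K|k$ is infinite — a contradiction. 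If $Z_{\p_0}\cong\Z_p$, then there is a unique $\P\mid\p_0$ in $K$ with $K_\P=k_{\p_0}^\nr(p)$; running the $\p_0$-local unramified Pro-$p$-Turm through Satz~\ref{localcoh} shows the transition maps on $H^1_{/\nr}$ are isomorphisms and those on $H^2=\Br[p]$ are multiplication by $p$, hence zero, so $\bigoplus_{\P\mid\p_0}H^i_\P((X\sm S_1,T)_K)$ equals $\mathrm{Ind}_{Z_{\p_0}}^{G}\F_p$ for $i=2$ and $0$ for $i\geq 3$. Thus $H^2_\et((X\sm S_1,T)_K)\cong\mathrm{Ind}_{Z_{\p_0}}^{G}\F_p$, $H^1_\et=0$, $H^{\geq 3}_\et=0$, so in the Hochschild--Serre-Spektralfolge for the universal Pro-$p$-\"Uberlagerung of $(X\sm S_1,T)$ only the rows $j=0,2$ survive, equal to $H^\bullet(G)$ and (by Shapiro) $H^\bullet(Z_{\p_0},\F_p)$; since $\cd\, G\leq 2$ the differential $d_3$ has zero target, and reading off degree~$3$ gives $h^3(X\sm S_1,T)=\dim H^1(Z_{\p_0},\F_p)=1$.

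By Satz~\ref{globcoh} this forces $\delta=1$ and $S_1=\varnothing$, i.e. $S=\{\p_0\}$ and $G=G_\varnothing^T(k)(p)$, whose abelianization is $\Cl_T(k)(p)$. In the number-field case $\Cl_T(k)$ is finite, so $G$ is a nontrivial torsion-free pro-$p$-group with $\cd\leq 2$ and finite abelianization; since $p$ is odd it cannot be virtually $\Z_p$ unless it equals $\Z_p$, which is impossible because $k$ has no unramified $\Z_p$-extension — contradiction. In the function-field case the hypothesis ($\null_p\Cl(k)\neq 0$ or $\delta=0$ or $T\neq\varnothing$ or $\#S\geq 2$) rules out exactly the configuration in which $G$ could be the $\Z_p$ coming from the constant-field extension (using that $\Cl_T(k)$ is finite as soon as $T\neq\varnothing$ and that, otherwise, $\dim H^1(G)=1+\dim\null_p\Cl(k)$), and the same contradiction results. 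Hence the inertia group of every $\p\in S$ with $\zeta_p\in k_\p$ is infinite, which by the first paragraph also yields $k_S^T(p)_\p=k_\p(p)$ for $\p\in S_\min\sm S_p$.

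The main obstacle is the final step: the local-cohomology bookkeeping along the split and unramified towers (in particular identifying $H^2$ of the covering with $\mathrm{Ind}_{Z_{\p_0}}^{G}\F_p$ as a $G$-module) and, above all, the degenerate case $\#S=1$, $\delta=1$, where one must combine the finiteness of $\Cl_T(k)(p)$ with the structure theory of pro-$p$-groups of cohomological dimension $\leq 2$ and, in the function-field case, invoke the extra hypothesis to exclude the constant-field $\Z_p$-extension.
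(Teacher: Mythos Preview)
Your reduction to $S=\{\p_0\}$, $\delta=1$, $G=G_\varnothing^T(k)(p)$ is correct, but the final contradiction in the case $Z_{\p_0}\cong\Z_p$ is not established. You invoke ``it cannot be virtually $\Z_p$ unless it equals $\Z_p$'', which is true for torsion-free pro-$p$-groups, yet you never show that $G$ is virtually~$\Z_p$: having $Z_{\p_0}\cong\Z_p$ as a closed subgroup, finite abelianisation, torsion-freeness and $\cd\le 2$ do not by themselves force this. The same lacuna recurs in your function-field paragraph, where the formula $\dim H^1(G)=1+\dim\null_p\Cl(k)$ tacitly presupposes $h^1(G)=1$, i.e.\ procyclicity --- precisely what is missing.

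The paper closes this gap by a simpler device that avoids the spectral-sequence computation over $K$ entirely. It works directly over~$k$: from the unramifiedness of $\p_0$ one has $H^2(G_{S'}^T(k)(p))\cong H^2(G_S^T(k)(p))$ with $S'=S\sm\{\p_0\}$, and comparing this isomorphism with the excision sequence for $(X\sm S',T)\subset(X\sm S,T)$ shows that $H^3_{\p_0}(X,T)\liso H^3_\et(X\sm S',T)$, hence $h^3(X\sm S',T)=1$, whence $S'=\varnothing$ and $\delta=1$ by Satz~\ref{globcoh}. The decisive observation is that \emph{this diagram chase applies verbatim to every finite intermediate field} $K'\subset k_S^T(p)$: the $K(\pi,1)$-property is inherited by $(X\sm S,T)_{K'}$, any prime $\P\mid\p_0$ in $K'$ is still unramified, and the chase now forces $S(K')\sm\{\P\}=\varnothing$. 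Thus $\p_0$ is undecomposed in $k_S^T(p)|k$, so $\Gal(k_{\p_0}^\nr(p)|k_{\p_0})\cong\Z_p$ surjects onto $G$, which is therefore procyclic and abelian. Then $G=G^{ab}\cong\Cl_T(k)(p)$ is finite (in the number-field case automatically; in the function-field case the extra hypothesis forces $T\neq\varnothing$, since procyclicity together with $T=\varnothing$ would give $h^1(X)=1$ and hence $\null_p\Cl(k)=0$), and a nontrivial finite pro-$p$-group has infinite cohomological dimension, contradicting the $K(\pi,1)$-property. Iterating over finite sublevels to obtain procyclicity is the missing ingredient; once you have it, your case-split on $Z_{\p_0}$ and the Hochschild--Serre bookkeeping become superfluous.
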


\begin{example} Sei $\F$ ein endlicher K\"{o}rper mit $\# \F \equiv 1 \bmod p$.
Wir setzen $k=\F(t)$, $T=\varnothing$ und $S$ bestehe aus der unendlichen Stelle, d.h.\ der Stelle von $k$, die zur Gradbewertung assoziiert ist. Dann hat ${\mathbb A}^1_\F={\mathbb P}^1_{\F} \sm \{\infty\}$ die $K(\pi,1)$-Eigenschaft f\"{u}r~$p$ und $k_{\{\infty \}}(p)|k$ ist die (unverzweigte) zyklotomische $\Z_p$-Erweiterung. Daher ist die in Satz~\ref{fulllocal} im Funktionenk\"{o}rperfall gemachte Voraussetzung n\"{o}tig.
\end{example}

\begin{proof}[Beweis von Satz~\ref{fulllocal}] Ohne Einschr\"{a}nkung k\"{o}nnen wir $S=S_\min\neq \varnothing$ annehmen.
Angenommen ein $\p\in S$ mit $\zeta_p\in k_\p$ w\"{u}rde in der Erweiterung $k_S^T(p)|k$ nicht verzweigen. Dann gilt mit  $S'=S\sm \{\p\}$ die Gleichheit $k_{S'}^T(p)=k_S^T(p)$, insbesondere erhalten wir einen Isomorphismus
\[
H^1_\et (X\sm S', T, \F_p) \stackrel{\sim}{\lang} H^1_\et (X\sm S, T, \F_p).
\]
Im Folgenden schlie{\ss}en wir die Koeffizienten $\F_p$ von der Notation aus.  Unter Verwendung von $H^3_\et(X\sm S,T)=0$ liefert die Ausschneidungsfolge das kommutative und exakte Diagramm
\[
\xymatrix@C=.45cm{&H^2(G_{S'}^T(k)(p))\ar[r]^\sim\ar@{^{(}->}[d]&H^2(G_S^T(k)(p))\ar[d]^\wr\\
H^2_\p(X,T)\ar@{^{(}->}[r]&H^2_\et(X\sm S',T)\ar[r]^\alpha&H^2_\et(X\sm S,T)\ar[r]&H^3_\p(X,T)\ar@{->>}[r]&H^3_\et(X\sm S',T).}
\]
Daher ist $\alpha$ eine spaltende Surjektion und  $\F_p\cong H^3_\p(X,T)\stackrel{\sim}{\to} H^3_\et(X\sm S',T)$. Nach Satz~\ref{globcoh} folgt $S'=\varnothing$, also $S=\{\p\}$, und $\delta=1$. Dieselbe \"{U}berlegung wendet sich auf jede endliche Teilerweiterung $K$ von  $k_S^T(p)|k$ an, weshalb $\p$ unzerlegt in der Erweiterung $k_S^T(p)=k_\varnothing^T(p)$ ist. Daher ist der nat\"{u}rliche Homomorphismus
\[
\Gal(k_\p^\nr(p)|k_\p) \lang G_\varnothing^T(k)(p)
\]
surjektiv, weshalb die Gruppe $G_S^T(k)(p)=G_\varnothing^T(k)(p)$ prozyklisch ist. Im Funktionenk\"{o}rperfall kann daher nicht gleichzeitig  $\null_p \Cl(k)\neq 0$ und $T=\varnothing$ gelten. Wegen $\# S=1$ und $\delta=1$ erhalten wir aus unseren Voraussetzungen, dass $T\neq \varnothing$ im Funktionenk\"{o}rperfall gilt. Daher ist nach Klassenk\"{o}rpertheorie  die Gruppe $G_\varnothing^T(k)(p)$ endlich. Da sie nach Voraussetzung nichttrivial ist, hat sie unendliche kohomologische Dimension, was der $K(\pi,1)$-Eigenschaft widerspricht. Also verzweigt jedes $\p\in S$ mit  $\zeta_p\in k_\p$  in $k_S^T(p)$. Da sich diese \"{U}berlegung auf jede endliche Teilerweiterung von $k$ in $k_S^T(p)$ anwendet, m\"{u}ssen die Tr\"{a}gheitsgruppen unendlich sein.  F\"{u}r $\p\notin  S_p$ impliziert dies  $k_S(p)_\p=k_\p(p)$, wie man leicht an der explizit bekannten Struktur der Gruppe $\Gal(k_\p(p)|k_\p)$ sieht (vgl.\ \cite{NSW}, 7.5.2).
\end{proof}

\begin{proposition} \label{dualmod} Es seien $S\neq \varnothing$ und $T$ disjunkte endliche Stellenmengen des globalen K\"{o}rpers $k$ und $p\neq \text{char}(k)$ eine Primzahl. Im Zahlk\"{o}rperfall sei $p\neq 2$ oder $k$ total imagin\"{a}r. Angenommen es gelten die Bedingungen {\rm (a)--(c)} aus Satz~\ref{thmb} und es gilt $\zeta_p\in k_\p$ f\"{u}r alle $\p\in S$.

\smallskip
Dann ist $G_S^T(k)(p)$ eine Pro-$p$-Dualit\"{a}tsgruppe der Dimension~$2$.
\end{proposition}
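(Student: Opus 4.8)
The plan is to verify the standard criterion for a duality group (see \cite{NSW}, Chapter~III, \S\,4): writing $G:=G_S^T(k)(p)$, which has finite cohomological dimension, it suffices to show that $\cd G=2$ and that $H^i(G,\Z_p[[G]])=0$ for $i\neq 2$, i.e.\ for $i=0$ and $i=1$.

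First I would settle $\cd G=2$. That $\cd G\leq 2$ is immediate: the $K(\pi,1)$-property (a) gives $H^i(G,\F_p)\cong H^i_\et(X\sm S,T)$, and by Theorem~\ref{globcoh} the latter vanishes for $i\geq 3$, using $S\neq\varnothing$. For the opposite inequality, note that by condition (c) the decomposition group $Z_\p\subseteq G$ of any $\p\in S$ equals $\Gal(k_\p(p)|k_\p)$; since $\zeta_p\in k_\p$ this is a Demushkin group and hence has cohomological dimension~$2$ (\cite{NSW}, 7.5.2). As cohomological dimension does not increase on passage to closed subgroups, $\cd G=2$. (We also record that $G$ is finitely generated.)

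It remains to show $H^i(G,\Z_p[[G]])=0$ for $i=0,1$. For $i=0$ this holds because $G$ is infinite, so $\Z_p[[G]]^G=0$. For $i=1$ I would proceed exactly along the lines of the proof of Satz~\ref{thmb}. Writing $\Z_p[[G]]=\varprojlim_K\Z_p[\Gal(K|k)]$ over the finite subextensions $K|k$ of $k_S^T(p)|k$ and using $\cd G<\infty$ to commute cohomology with the inverse limit, Shapiro's lemma gives $H^1(G,\Z_p[[G]])\cong\varprojlim_K H^1\big(\Gal(k_S^T(p)|K),\Z_p\big)$ with corestriction as transition maps; by the $K(\pi,1)$-property of the marked curves $(X\sm S,T)_K$ these equal $\varprojlim_n H^1_\et((X\sm S,T)_K,\Z/p^n)$. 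Running the duality of Theorem~\ref{bstdual} at coefficient level $\Z/p^n$ and passing to the limit --- as in formula $(*)$ and the snake-lemma step of the proof of Satz~\ref{thmb} --- together with the class field theory exact sequence \cite{NSW}, 10.3.12, tensored with $\Z_p$, reduces the vanishing of $H^1(G,\Z_p[[G]])$ to $\varprojlim_{K\subset k_S^T(p)} E_{K,S_\min\cup T}\otimes\Z_p=0$ (here $S=S_\min$ because $\zeta_p\in k_\p$ for every $\p\in S$). But this last vanishing is precisely the additional conclusion of Satz~\ref{thmb} under the hypotheses (a)--(c). Hence $H^1(G,\Z_p[[G]])=0$, and $G$ is a pro-$p$-duality group of dimension~$2$.

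The hard part is the reduction in the third paragraph: carrying the duality of Theorem~\ref{bstdual} through at finite coefficient level, keeping track of the $\varprojlim$- and $\varprojlim^1$-terms, and handling the local contributions $K_\p^\times/U_\p$ at the primes $\p\in S$ --- this is where $\zeta_p\in k_\p$ is used a second time, guaranteeing these contributions have trivial $\Z_p$-free part away from $S_p$. The whole computation parallels \cite{NSW}, Chapter~X (the case $T=\varnothing$, $S\supseteq S_p\cup S_\infty$) and the already-established proof of Satz~\ref{thmb}.
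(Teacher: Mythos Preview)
Your setup is fine and essentially matches the paper: the criterion $H^i(G,\Z_p[[G]])=0$ for $i<2$ is (after commuting cohomology with the inverse limit and Nakayama) the same as the vanishing of the terms $D_i(G)=\varinjlim_{U}H^i(U,\F_p)^\vee$ in \cite{NSW}, Theorem~3.4.6, which is exactly what the paper verifies. Your argument for $\cd G=2$ and for $i=0$ is the paper's argument.

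The gap is in your treatment of $i=1$. You invoke Theorem~\ref{bstdual} and ``formula~$(*)$ of the proof of Satz~\ref{thmb}'', but both of these concern $\Sha^2$ and $H^2_\et$, not $H^1$; the isomorphism $\Sha^2(K,S,T)\cong V_S^T(K)^\vee$ says nothing directly about $H^1_\et\big((X\sm S,T)_K,\F_p\big)$. Likewise the exact sequence \cite{NSW}, 10.3.12, was used in Satz~\ref{thmb} only to deduce the additional vanishing $\varprojlim E_{K,S\cup T}\otimes\Z_p=0$ \emph{from} conditions (b) and (c); it does not give a reduction of $D_1(G)=0$ to that vanishing. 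So the step ``reduces the vanishing of $H^1(G,\Z_p[[G]])$ to $\varprojlim E_{K,S\cup T}\otimes\Z_p=0$'' is not justified, and I do not see how to make it work: the objects you have identified live in the wrong cohomological degree.

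The paper's route for $D_1=0$ is different and more direct. One dualises and shows $\varinjlim_K H^1_\et\big((X\sm S,T)_K,\F_p\big)^\vee=0$ (transition maps $\cor^\vee$). Excision and local duality give, for each $K$, an exact piece
\[
\bigoplus_{\p\in S(K)} H^1_{\nr}(K_\p,\mu_p)\;\to\;H^1_\et\big((X\sm S,T)_K,\F_p\big)^\vee\;\to\;H^1_\et\big((X,T)_K,\F_p\big)^\vee.
\]
The left-hand term dies in the limit because $\zeta_p\in k_\p$ and $k_S^T(p)_\p=k_\p(p)$ for $\p\in S$ (condition~(c)); the right-hand term is $\Cl_T(K)/p$ (with the natural inclusion as transition map) and dies by the principal ideal theorem. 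In the function field case with $T=\varnothing$ one replaces this last step by Poincar\'e duality $H^1_\et\big((X\sm S)_K,\F_p\big)^\vee\cong H^2_\et(X_K,j_!\mu_p)$, then uses the Kummer sequence and the Hasse principle for the Brauer group. None of this passes through $\varprojlim E_{K,S\cup T}\otimes\Z_p$.
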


\begin{proof} Aus Bedingung (a) folgt  $H^3(G_{S}^T(k)(p),\F_p) \stackrel{\sim}{\to} H^3_\et (X\sm S, T, \F_p)=0$, also $\cd\, G_{S}^T(k)(p)\leq 2$. Andererseits enth\"{a}lt nach (c) die Gruppe $G_{S}^T(k)(p)$ f\"{u}r jedes $\p\in S$ die volle lokale Gruppe $\Gal(k_\p(p)|k_\p)$ als Untergruppe. Wegen $\zeta_p\in k_\p$ f\"{u}r $\p\in S$ haben diese lokalen Gruppen die kohomologische Dimension~$2$, also gilt auch  $\cd\,G_{S}^T(k)(p)$.

Um zu zeigen, dass $G_S^T(k)(p)$ eine Dualit\"{a}tsgruppe ist, m\"{u}ssen wir nach \cite{NSW}, Theorem~3.4.6, das Verschwinden der Terme
\[
D_i\big(G_S^T(k)(p)\big): = \varinjlim_{\substack{U\subset G_S^T(k)(p)\\ \cor^\vee}} H^i(U, \F_p)^\vee,\quad i=0,1,
\]
nachweisen. Hierbei durchl\"{a}uft $U$ die offenen Normalteiler von $G_S^T(k)(p)$ und die \"{U}bergangsabbildungen sind die Duale der Korestriktionshomomorphismen. Das Verschwinden von $D_0$ folgt trivial aus der Unendlichkeit der Gruppe $G_S^T(k)(p)$. Wir haben daher nachzuweisen, dass
\[
\varinjlim_{K\subset k_S^T(p)} H^1\big((X\sm S,T)_K,\F_p\big)^\vee=0
\]
gilt. Wir nehmen zun\"{a}chst an, dass $k$ ein Zahlk\"{o}rper oder $T\neq \varnothing$ ist. In diesem Fall ist f\"{u}r jede endliche Teilerweiterung $K$ von $k_S^T(p)|k$ die Gruppe $\Cl_T(K)$ endlich und nach dem Hauptidealsatz gilt
\[
\varinjlim_{K\subset k_S^T(p)} H^1_\et\big((X,T)_K,\F_p\big)^\vee= \varinjlim_{K\subset k_S^T(p)} \Cl_T(K)/p =0.
\]
Die Ausschneidungsfolge und der lokale Dualit\"{a}tssatz zeigen die Exaktheit der Folge
\[
\bigoplus_{\p\in S(K)} H^1_\nr(K_\p,\mu_p) \to H^1\big((X\sm S,T)_K,\F_p\big)^\vee \to H^1\big((X,T)_K,\F_p\big)^\vee\,.
\]
Nun gilt $\zeta_p\in k_\p$ und  $k_S^T(p)_\p=k_\p(p)$ f\"{u}r alle $\p\in S$, weshalb der linke Term im Limes \"{u}ber alle $K\subset k_S^T(p)$ verschwindet. Das Verschwinden des rechten Terms im Limes haben wir oben eingesehen, was die gew\"{u}nschte Aussage zeigt.

Es verbleibt der Fall $T=\varnothing$, wenn $k$ ein Funktionenk\"{o}rper ist. Nach Poincar\'{e}-Dualit\"{a}t gilt
\[
H^1_\et((X\sm S)_K,\F_p)^\vee \cong H^2_c((X\sm S)_K,\mu_p)= H^2_\et(X_K,j_!\mu_p),
\]
wobei $j: (X\sm S)_K \to X_K$ die Einbettung bezeichnet.
Die Ausschneidungsfolge zusammen mit $H^2_\p(X_K,j_!\mu_p)\cong H^1(K_\p,\mu_p)$ f\"{u}r $\p\in S(K)$ zeigt die Exaktheit der Folge
\[
\bigoplus_{\p \in S(K)} H^1(K_\p,\mu_p) \to H^2_\et(X_K, j_! \mu_p) \to H^2_\et\big((X\sm S)_K, \mu_p\big).\leqno (\ast)
\]
Wieder wegen $\zeta_p\in k_\p$ und $k_S(p)_\p=k_\p(p)$ f\"{u}r $\p\in S$ verschwindet der linke Term von  $(\ast)$ beim \"{U}bergang zum Limes \"{u}ber alle $K\subset k_S(p)$.  Die Kummerfolge induziert die Exaktheit von
\[
\Cl_S(K)/p \lang H^2_\et\big((X\sm S)_K, \mu_p\big) \lang \null_p \Br\big((X\sm S)_K\big).\leqno (\ast\ast)
\]
Der Hauptidealsatz zeigt $\varinjlim_{K\subset k_S(p)} \Cl_S(K)/p=0$ und das Hasseprinzip f\"{u}r die Brauergruppe gibt uns eine Injektion
\[
\null_p \Br\big((X\sm S)_K\big) \hookrightarrow \bigoplus_{\p\in S(K)} \null_p\Br(K_\p).
\]
Da nun $k_S(p)$ f\"{u}r $\p\in S$ die maximale unverzweigte $p$-Erweiterung von $k_\p$ realisiert, verschwindet der rechte, also auch der mittlere Term von $(\ast\ast)$ im Limes \"{u}ber $K$, und folglich auch der mittlere Term von $(\ast)$. Dies beendet den Beweis.
\end{proof}

\begin{remark} F\"{u}r $S\neq \varnothing$ ist die $S$-$T$-Idelklassengruppe von $k_S^T$  definiert durch
\[
C_S^T:=\varinjlim_{K\subset k_S^T} \coker \big(E_{K,S\cup T} \lang \prod_{\p\in S(K)} K_\p^\times\big)\,.
\]
Das Paar $(G_S^T(k), C_S^T)$ ist eine Klassenformation. F\"{u}r einen Teilk\"{o}rper $K\subset k_S^T$ setzt man $C_S^T(K):=(C_S^T)^{\Gal(k_S^T|K)}$.
Unter den Voraussetzungen von Satz~\ref{dualmod} ist der dualisierende Modul der Dualit\"{a}tsgruppe $G_S^T(k)(p)$ isomorph zu
$\text{\rm tor}_{p}\big(C_S^T(k_S^T(p))\big)$.
\end{remark}

\bigskip

\noindent {NWF I - Mathematik, Universit\"{a}t Regensburg, D-93040
Regensburg, Deutschland. Email-Adresse: alexander.schmidt@mathematik.uni-regensburg.de}

\end{document}